\def\supp{\mathop{\rm supp}\nolimits}
\newtheorem{theorem}{Theorem}[section]
\newtheorem{lemma}[theorem]{Lemma}
\newtheorem{proposition}[theorem]{Proposition}
\newtheorem{corollary}[theorem]{Corollary}
\newtheorem{definition}[theorem]{Definition}
\newtheorem{remark}[theorem]{Remark}
\newtheorem{notation}[theorem]{Notation}
\newcommand{\N}{\mathbb{N}}
\newcommand{\R}{\mathbb{R}}
\renewenvironment{proof}[1][.]{%
\bigskip\noindent{\bf Proof#1 }}{%
\hfill$\blacksquare$\bigskip}
\newcommand{\on}{\operatorname}
\newcommand{\vp}{\varphi}
\begin{document}
\pagestyle{myheadings}
\title{Invariant measures for place dependent idempotent iterated function systems}
\author[1]{Jairo K. Mengue
\thanks{E-mail: jairo.mengue@ufrgs.br}}
\author[2]{Elismar R. Oliveira
\thanks{E-mail: elismar.oliveira@ufrgs.br}}
\affil[1,2]{Universidade Federal do Rio Grande do Sul}

\date{\today}
\maketitle

\begin{abstract}
We study the set of invariant idempotent probabilities for place dependent idempotent iterated function systems defined in compact metric spaces. Using well-known ideas from dynamical systems, such as the Ma\~{n}\'{e} potential and the Aubry set, we provide a complete characterization of the densities of such  idempotent probabilities. As an application, we provide an alternative formula for the attractor of a class of fuzzy iterated function systems.
\end{abstract}
\vspace {.8cm}

\emph{Key words and phrases: iterated function systems, Hutchinson measures, fuzzy sets, fixed points, idempotent measures, Maslov measures}

\emph{2020 Mathematics Subject Classification: Primary {15A80, 37A30, 37A50, 28A33, 46E27, 47H10;
Secondary  37C25, 37C30.}}

\section{Introduction}
The theory of idempotent probabilities was introduced by Maslov (also called Maslov measures) in \cite{Kol97} and \cite{LitMas96} to model problems of optimization such as  the Hamilton-Jacob equation by considering those equations as integrals with respect to a max-plus structure (sometimes min-plus, max-min, etc).

In the study of dynamical systems, several objects obtained in ergodic theory has counterparts as idempotent probabilities when considered  as  zero temperature limits. This is the case, for example, of the main eigenfunction for the transfer operator which corresponds to calibrated subactions \cite{CG}, \cite{BLT} and \cite{BLL}.

In the present paper our focus is the description of the invariant idempotent probabilities for a place dependent  transfer operator associated to an iterated function system (IFS). In the non place dependent case, there exists a unique invariant idempotent probability, see \cite{MZ}. Later, in \cite{dCOS20fuzzy} and \cite{ExistInvIdempotent} it was shown  that this unique measure can be obtained by iteration of a contractive operator, providing, in this way, a characterization via limit with respect to an appropriate metric.

We will prove that in the place dependent case it is possible to exhibit examples with infinitely many invariant idempotent probabilities and our main result (see Theorem \ref{teo: main result}) presents a characterization of the associated densities. Such characterization uses two important concepts of ergodic optimization, Ma\~{n}\'{e} potential and Aubry set. It is a very interesting fact that in ergodic optimization a similar characterization for subactions is well known, but it is obtained with a different analysis because it is given in the opposite order of variables (compare Theorem \ref{teo: main result} with \cite[Lemma 14]{LM} - see also  \cite[Theorem 4.7]{BLL} and   \cite[Theorem 10]{GaribLop08}).

Let us consider the  max-plus semiring $\mathbb{R}_{\max}:=\mathbb{R}\cup \{-\infty\}$ endowed with the operations
\begin{enumerate}
	\item $\oplus: \mathbb{R}_{\max} \times \mathbb{R}_{\max} \to \mathbb{R}_{\max}$, where $a \oplus b :=\max(a,b)$ assuming $a \oplus -\infty:=a$. The max-plus \emph{additive} neutral element is $\mathbb{0} :=-\infty$.
	\item $\odot: \mathbb{R}_{\max} \times \mathbb{R}_{\max} \to \mathbb{R}_{\max}$, where $a \odot b :=a+b$ assuming $a \odot -\infty:=-\infty$. The max-plus \emph{multiplicative} neutral element is $\mathbb{1} :=0$.
\end{enumerate}

We consider also a compact metric space $(X,d)$ and $C(X,\mathbb{R})$, the set of continuous functions from $X$ to $\mathbb{R}$.
We notice that $\mathcal{V}:=(C(X,\mathbb{R}), \oplus, \odot)$ has a natural $\mathbb{R}$-semimoduli (a vectorial space over a semiring) structure:
\begin{enumerate}
	\item $(a \odot f)(x):= a \odot f(x)$, for $a \in \mathbb{R}$ and $f\in C(X,\mathbb{R})$;
	\item $(f \oplus g)(x):=f (x)\oplus g(x)$ for $f, g \in C(X,\mathbb{R})$.
\end{enumerate}

A function $m: C(X,\mathbb{R}) \to \mathbb{R}$ is a max-plus linear  functional if
	\begin{enumerate}
		\item  $m(a \odot f)=a \odot m( f)$, $\forall a \in \mathbb{R}$ and $\forall f \in C(X,\mathbb{R})$(max-plus homogeneity);
		\item  $m(f \oplus g)=m( f) \oplus m( g)$, $\forall f, g \in C(X,\mathbb{R})$ (max-plus additive).
	\end{enumerate}

	We denote by $C^{*}(X,\mathbb{R})$ the max-plus dual of $C(X,\mathbb{R})$, which is the set of all max-plus linear functionals $m: C(X,\mathbb{R}) \to \mathbb{R}$. An element $m \in C^{*}(X,\mathbb{R})$ is called a \emph{Maslov measure} or an \emph{idempotent measure} on $X$.

\begin{definition}\label{def: Idempotent probability as a functional}
	We define $I(X)$ as the subset of  $C^{*}(X,\mathbb{R})$ of all Maslov measures satisfying  $m(0)=0$. An element $m \in I(X)$ is called a \emph{Maslov probability} or an \emph{idempotent probability} on $X$.
\end{definition}
 Using the notation $\mathbb{1}=0$, a Maslov probability satisfies $m(\mathbb{1})=\mathbb{1}$. We notice that $\mu(c)=c $, for all $c \in\mathbb{R}$ and $\mu\in I(X)$.  Another consequence of the definition is that an idempotent probability  is an order-preserving functional, that is, if $\varphi \leq \psi$ then  $\mu(\varphi) \leq \mu(\psi)$, for $\varphi, \psi \in C(X, \mathbb{R})$.

As it is well known, the idempotent measures are closely related with the upper semi-continuous  functions (u.s.c.) (see Definition \ref{def:usc}). The support of a u.s.c. function $\lambda:X\to \mathbb{R}_{\max}$ is the closed set $$\supp( \lambda):=\{x \in X | \lambda(x) \neq -\infty\}.$$ From now on we denote by $U(X, \mathbb{R}_{\max})$  the set of u.s.c. functions $\lambda:X\to \mathbb{R}_{\max}$ with $\supp( \lambda) \neq \varnothing$. We also denote  $\bigoplus_{x \in X} := \sup_{x \in X}$. As $X$ is compact, any function $\lambda\in U(X, \mathbb{R}_{\max})$ attains its supremum.

 The next result is well known in the literature and it plays a central role in this paper, but as far as we know it was proved in \cite{KM89} under a different context. We propose to present a proof for it in the Appendix section~\ref{sec:Fundamentals of idempotent analysis}.

\begin{theorem}\label{teo : densidade Maslov} $\mu:C(X, \mathbb{R}) \to  \mathbb{R} $ is an idempotent measure if and only if there exists $\lambda\in U(X, \mathbb{R}_{\max})$ satisfying
	\[\mu(\psi) = \bigoplus_{x\in X} \lambda(x)\odot \psi(x),\,\,\,\,\forall \,\psi \in C(X,\mathbb{R}).\]
There is a unique such function $\lambda$ in $U(X, \mathbb{R}_{\max})$  and  $\mu\in I(x)$ if and only if $\oplus_{x\in X}\lambda(x) = 0.$
\end{theorem}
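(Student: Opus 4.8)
The plan is to establish the two implications separately; the ``only if'' direction is where almost all the work lies, and it will also yield the uniqueness of $\lambda$ and the $I(X)$-criterion with little extra effort.

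For the ``if'' direction, given $\lambda\in U(X,\mathbb{R}_{\max})$ I would define $\mu(\psi):=\bigoplus_{x\in X}\lambda(x)\odot\psi(x)$ and verify directly that $\mu$ is a max-plus linear functional with values in $\mathbb{R}$. Finiteness holds because $\lambda\odot\psi$ is upper semi-continuous with nonempty support on the compact space $X$, so it attains a maximum which is a real number (it is $>-\infty$ at any point of $\supp(\lambda)$, and $<+\infty$ since $\lambda$ is $\mathbb{R}_{\max}$-valued and $\psi$ bounded); max-plus homogeneity is the statement that a finite constant $a$ factors through a supremum; and max-plus additivity is the elementary identity $\sup_x\max(a(x),b(x))=\max(\sup_x a(x),\sup_x b(x))$ with $a=\lambda\odot\psi$ and $b=\lambda\odot\varphi$. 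Evaluating at $\psi\equiv 0$ gives $\mu(0)=\bigoplus_{x\in X}\lambda(x)$, so $\mu\in I(X)$ exactly when $\bigoplus_{x\in X}\lambda(x)=0$.

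For the ``only if'' direction, starting from a Maslov measure $\mu$ I would define the candidate density by
\[\lambda(x):=\inf_{\varphi\in C(X,\mathbb{R})}\big(\mu(\varphi)-\varphi(x)\big)=\inf\{\mu(\varphi):\varphi\in C(X,\mathbb{R}),\ \varphi(x)=0\},\]
the two expressions agreeing because $\mu(\varphi)-\varphi(x)=\mu(\varphi-\varphi(x))$ by homogeneity while $\varphi-\varphi(x)$ vanishes at $x$. Then $\lambda$ is upper semi-continuous, being an infimum of the continuous functions $x\mapsto\mu(\varphi)-\varphi(x)$, and $\lambda(x)\le\mu(0)<+\infty$ by taking $\varphi\equiv 0$, so $\lambda\colon X\to\mathbb{R}_{\max}$. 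The easy inequality $\bigoplus_{x}\lambda(x)\odot\psi(x)\le\mu(\psi)$ is immediate since $\lambda(x)\le\mu(\psi)-\psi(x)$ for every $x$. The reverse inequality, with $M:=\bigoplus_x\lambda(x)\odot\psi(x)$, is the crux and the step I expect to be hardest; I would run a partition-of-unity argument. Fix $\psi$ and $\varepsilon>0$; since $\lambda(y)\le M-\psi(y)$, for each $y\in X$ there is $\varphi_y\in C(X,\mathbb{R})$ with $\varphi_y(y)=0$ and $\mu(\varphi_y)<M-\psi(y)+\varepsilon$ (immediate when $\lambda(y)=-\infty$). Put $g_y:=\varphi_y+\psi(y)$, so $g_y(y)=\psi(y)$ and $\mu(g_y)<M+\varepsilon$ by homogeneity, and let $U_y:=\{z:g_y(z)>\psi(z)-\varepsilon\}$, an open set containing $y$. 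Compactness gives $X=U_{y_1}\cup\dots\cup U_{y_n}$, and then $g:=\big(\bigoplus_{i=1}^n g_{y_i}\big)\odot\varepsilon\ge\psi$ pointwise; by monotonicity of $\mu$ (valid for any max-plus additive functional, since $\psi\ge\varphi$ forces $\psi=\psi\oplus\varphi$) and max-plus linearity, $\mu(\psi)\le\mu(g)=\big(\bigoplus_{i=1}^n\mu(g_{y_i})\big)+\varepsilon<M+2\varepsilon$. Letting $\varepsilon\downarrow 0$ yields $\mu(\psi)=\bigoplus_x\lambda(x)\odot\psi(x)$ for all $\psi$. Evaluating at $\psi\equiv 0$ then gives $\bigoplus_x\lambda(x)=\mu(0)>-\infty$, so $\supp(\lambda)\ne\varnothing$ and $\lambda\in U(X,\mathbb{R}_{\max})$, and the same identity shows $\mu\in I(X)$ (that is, $\mu(0)=0$) if and only if $\bigoplus_{x\in X}\lambda(x)=0$.

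For uniqueness, I would show that every $\lambda\in U(X,\mathbb{R}_{\max})$ representing $\mu$ is forced to coincide with the infimum formula above, and hence is unique. Indeed, from the representation $\mu(\varphi)-\varphi(x)=\bigoplus_y\big(\lambda(y)+\varphi(y)\big)-\varphi(x)\ge\lambda(x)$ for every $\varphi$, while testing against $\varphi_M:=-M\,d(\cdot,x)$ and using upper semi-continuity of $\lambda$ together with compactness of $X$ gives $\mu(\varphi_M)=\bigoplus_{y}\big(\lambda(y)-M\,d(y,x)\big)\downarrow\lambda(x)$ as $M\to\infty$; combining, $\inf_{\varphi}\big(\mu(\varphi)-\varphi(x)\big)=\lambda(x)$, a quantity depending on $\mu$ alone. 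The genuinely non-formal ingredients are thus the covering step in the reverse inequality and this monotone-convergence computation, both of which rely on compactness of $X$; everything else is bookkeeping with the $\oplus,\odot$ operations and the definition of $\lambda$.
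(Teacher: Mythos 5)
Your proposal is correct in substance but follows a genuinely different route from the paper's. The paper (Appendix, Section~\ref{sec:Fundamentals of idempotent analysis}) first extends $\mu$ monotonically to the class of pointwise limits of nonincreasing sequences of continuous functions (Lemma~\ref{maximal}), evaluates this extension on the max-plus Dirac functions $g^{0}_{x}$ via explicit ``standard continuous approximations'' to produce the density $F(0,x)=\tilde m(g^{0}_{x})$, proves upper semi-continuity of $x\mapsto F(0,x)$ by comparing approximating sequences, and then establishes $m(h)=\bigoplus_{x}F(0,x)\odot h(x)$ by a compactness/covering argument with those approximations (Lemma~\ref{lem: represent delta dirac property}); uniqueness comes from the injectivity of $\lambda\mapsto m_\lambda$ in Theorem~\ref{thm: charact idempotent measures}. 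You instead define the density directly as the max-plus conjugate $\lambda(x)=\inf_{\varphi}\big(\mu(\varphi)-\varphi(x)\big)$, which makes upper semi-continuity and the inequality $\bigoplus_{x}\lambda(x)\odot\psi(x)\le\mu(\psi)$ immediate and replaces the whole extension apparatus by a single partition-of-unity-style covering argument; your uniqueness argument via $\varphi_M=-M\,d(\cdot,x)$ is likewise self-contained and correct (it is essentially the paper's injectivity step, which there uses the standard approximations of $g^{0}_{x_0}$ instead of the cone functions $-M\,d(\cdot,x)$). What your route buys is economy: no extension lemma, no Dini-type argument, no Dirac approximations. One small point to make explicit: in the reverse inequality you choose $\varphi_y$ with $\mu(\varphi_y)<M-\psi(y)+\varepsilon$, which presupposes $M>-\infty$; if $M=-\infty$ (i.e.\ $\lambda\equiv-\infty$) this choice is impossible, but running the same covering argument with an arbitrary real threshold $K$ in place of $M$ gives $\mu(\psi)<K+2\varepsilon$ for every $K$, contradicting $\mu(\psi)\in\mathbb{R}$, so this case is excluded --- a sentence to this effect is needed, since you only deduce $\supp(\lambda)\neq\varnothing$ after the representation identity has already been established.
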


The unique upper semi-continuous function $\lambda$ presented in Theorem \ref{teo : densidade Maslov} will be called the \textit{density} of $\mu$. We will also use the notation $\mu=\bigoplus_{x\in X}\lambda(x)\odot\delta_x\in C^*(X,\mathbb{R})$, where $\delta_x(\psi) = \psi(x)$. In this way, by \emph{support} of an idempotent measure $\mu=\bigoplus_{x \in X}  \lambda(x)\odot \delta_{x}\in X$, we will mean the support of its density.

\begin{definition}\label{def:ucIFS}
Let $(X,d_X)$ and $(J,d_J)$ be compact metric spaces. A \emph{uniformly contractible iterated function system}  $(X,(\phi_j)_{j\in J})$ is a family of maps $\{\phi_j:X\to X\,|\,j\in J\}$  satisfying:  there exists $0<\gamma<1$ such that
\begin{equation}\label{eq: gamma contraction}
	d_X(\phi_{j_1}(x_1),\phi_{j_2}(x_2)) \leq \gamma\cdot[d_J(j_1,j_2)+ d_X(x_1,x_2)],\,\,\forall j_1,j_2\in J, \forall x_1,\,x_2\in X.
	\end{equation}
\end{definition}

From now on we consider also the map $\phi:J\times X\to X$ defined by $\phi(j,x) =\phi_j(x)$. From \eqref{eq: gamma contraction} the map $\phi$ is continuous.

\begin{definition} \label{def:mpIFS with measures}
Let $(X,d_X)$ and $(J,d_J)$ be compact metric spaces.\,	A max-plus IFS (mpIFS for short) $\mathcal{S}=(X, (\phi_j)_{j\in J}, (q_j)_{j\in J})$ is a uniformly contractive IFS $(X, (\phi_j)_{j \in J})$ endowed with a normalized family of weights $(q_j)_{j\in J}$, which is a family of functions $\{q_j:X\to\mathbb{R}\,|\,j\in J\}$ satisfying: \newline
	i. \, there exists a constant $C>0$ such that
	\begin{equation}\label{eq: q lipschitz}
		|q_{j}(x_1)-q_{j}(x_2)|\leq C \cdot d_X(x_1,x_2),\,\,\forall j\in J, \forall x_1,\,x_2\in X;
	\end{equation}
	ii. \,
	\begin{equation}\label{eq: q normalized}
		\bigoplus_{j\in J} q_j(x) = 0, \,\,\forall x\in X;
	\end{equation}
	iii. \, the function $q:J\times X\to\mathbb{R}$, defined by $q(j,x)=q_j(x)$, is continuous.
\end{definition}

We also use the compact notation $\mathcal{S}=(X, \phi, q)$ to denote a mpIFS.

\begin{definition}\label{def:mp ruelle operator}
	To each mpIFS $\mathcal{S}=(X, \phi, q)$ we assign the max-plus transfer operator  $\mathcal{L}_{\phi,q}: C(X, \mathbb{R})   \to C(X, \mathbb{R})  $, defined by
	\begin{equation}\label{eq:mp ruelle operator}
		\mathcal{L}_{\phi,q}(f)(x):=\bigoplus_{j \in J} q_{j}(x)\odot f(\phi_{j}(x)),
	\end{equation}
	for any $f \in C(X, \mathbb{R})$. An idempotent probability  $\mu \in {I}(X)$ is called invariant (with respect to the mpIFS) if $\mu(\mathcal{L}_{\phi,q}(f))=\mu(f)$,  for any $f \in C(X, \mathbb{R})  $.
\end{definition}

In some sense $\mathcal{L}_{\phi,q}(f)(x)$ plays the role of the transfer operator in thermodynamic formalism for IFS (see  \cite{FanRuelle}, \cite{MO}). If $J$ is a finite set, $(X,(\phi_j)_{j\in J})$ is a uniformly contractive IFS and $(p_j(x))_{j\in J}$ are non-negative numbers satisfying $\sum_{j\in J}p_j(x) =1,\, \forall x\in X$, then the transfer operator in thermodynamic formalism for IFS is given by
$$\mathcal{L}_{\phi,p}^{{\rm prob}}(f)(x):=\sum_{j \in J} p_{j}(x) \cdot f(\phi_{j}(x)),$$
for any $f \in C(X,\mathbb{R})$. A Borel probability $\mu$ on $X$ is invariant for $\mathcal{L}_{\phi,p}^{{\rm prob}}$ if  $\mu(\mathcal{L}_{\phi,p}^{{\rm prob}}(f))=\mu(f)$,  for any $f \in C(X, \mathbb{R})$. A correspondence between this two approaches can be achieved when considered large deviations for zero temperature limits  \cite{Aki99}, \cite{BLT} and \cite{M}. We intend to consider such question and then to explain more details in a future paper.

In \cite{MZ}, \cite{dCOS20fuzzy} and \cite{ExistInvIdempotent} was proved that when $J$ is a finite set and $q_j(x)=q_j$ does not depend of $x$, there exists a unique invariant idempotent probability $\mu$ for  $\mathcal{L}_{\phi,q}$, which is attractive. On the other hand, in subsection \ref{Non uniqueness of the invariant idempotent measure for mpIS with variable weights} we will present an example of place dependent weight $q_j(x)$ for a finite set $J$ where there exist infinitely many invariant probabilities for  $\mathcal{L}_{\phi,q}$. In subsection \ref{sec:Representation of max-plus  invariant densities} we present the main result of this work which is a characterization of the invariant idempotent probabilities for  $\mathcal{L}_{\phi,q}$ using tools of ergodic optimization as Ma\~{n}\'{e} potential and Aubry set (see Theorem \ref{teo: main result}). As an application, in Section~\ref{sec: non dependence} we prove the uniqueness and exhibit a characterization of the invariant idempotent probability in the case where $q_j(x)=q_j$ does not depend of $x$ (we assume that $J$ is a compact metric space instead a finite set). Furthermore, in Section~\ref{sec:Applications to fuzzy IFS} we present an application of our results to fuzzy IFSs obtained via conjugation from mpIFSs. Finally, in the Appendix Section \ref{sec:Fundamentals of idempotent analysis} we exhibit a proof of Theorem~\ref{teo : densidade Maslov}, following ideas from ~\cite{KM89}.

\section{Max-plus IFSs on compact metric spaces with a compact set of maps}\label{sec:IFSs on compact spaces with a compact set of maps}

 In the initial part of this section we introduce two operators: $M_{\phi,q}$ acting on idempotent probabilities and $L_{\phi,q}$ acting on densities (see Definition \ref{def:Markov operator} and Definition \ref{def:transfer operator}). The main result of this section is Theorem \ref{thm: equivalences} which explains the equivalence between $\mathcal{L}_{\phi,q}$ and these operators.

   Firstly, we propose to prove that $\mathcal{L}_{\phi,q}$ is well defined.

\begin{proposition}\label{prop: max-plus L preserv} Let $\mathcal{S}=(X, \phi, q)$ be a mpIFS. If $f\in C(X,\mathbb{R})$ then $\mathcal{L}_{\phi,q}(f)\in C(X,\mathbb{R})$.
\end{proposition}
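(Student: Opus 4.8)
The plan is to reduce the statement to the classical fact that the supremum over a compact parameter set of a jointly continuous function is continuous.

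First I would introduce the auxiliary function $g\colon J\times X\to\mathbb{R}$, $g(j,x):=q_j(x)+f(\phi_j(x))$, so that $\mathcal{L}_{\phi,q}(f)(x)=\sup_{j\in J} g(j,x)$. By hypothesis (iii) of Definition \ref{def:mpIFS with measures} the map $q\colon J\times X\to\mathbb{R}$ is continuous, and we already observed after Definition \ref{def:ucIFS} that $\phi\colon J\times X\to X$ is continuous; since $f\in C(X,\mathbb{R})$, the composition $f\circ\phi$ is continuous, and hence $g$ is continuous on the compact metric space $J\times X$. In particular $g$ is bounded, so for every fixed $x$ the number $\mathcal{L}_{\phi,q}(f)(x)=\sup_{j\in J}g(j,x)$ is finite (and the supremum is attained, $J$ being compact and $g(\cdot,x)$ continuous). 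Thus $\mathcal{L}_{\phi,q}(f)$ is a well-defined real-valued function on $X$.

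Next I would prove that $h(x):=\sup_{j\in J}g(j,x)$ is (uniformly) continuous. Since $J\times X$ is compact, $g$ is uniformly continuous, so given $\ve>0$ there is $\delta>0$ such that $d_X(x_1,x_2)<\delta$ implies $|g(j,x_1)-g(j,x_2)|<\ve$ for \emph{every} $j\in J$. For such $x_1,x_2$ and every $j$ we get $g(j,x_1)\le g(j,x_2)+\ve\le h(x_2)+\ve$; taking the supremum over $j$ yields $h(x_1)\le h(x_2)+\ve$, and by symmetry $|h(x_1)-h(x_2)|\le\ve$. Hence $h=\mathcal{L}_{\phi,q}(f)\in C(X,\mathbb{R})$, which is the claim.

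If a quantitative modulus of continuity is desired, one can avoid the abstract compactness argument and instead combine the Lipschitz bound \eqref{eq: q lipschitz} for $q$, the contraction estimate \eqref{eq: gamma contraction} (which gives $d_X(\phi_j(x_1),\phi_j(x_2))\le\gamma\,d_X(x_1,x_2)$ uniformly in $j$), and a modulus of continuity $\omega_f$ of $f$, to obtain $|g(j,x_1)-g(j,x_2)|\le C\,d_X(x_1,x_2)+\omega_f\big(\gamma\,d_X(x_1,x_2)\big)$ uniformly in $j$, and then argue as above. I do not expect any genuine obstacle; the only point to be careful about is that one needs a modulus of continuity for $g$ that is \emph{uniform in $j$} — pointwise continuity of each $q_j$ and each $\phi_j$ would not be enough — and this is exactly what compactness of $J$ together with hypothesis (iii) (equivalently \eqref{eq: q lipschitz}--\eqref{eq: gamma contraction}) supplies.
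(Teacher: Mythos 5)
Your proposal is correct and follows essentially the same route as the paper: the paper likewise defines $F(j,x)=q_j(x)\odot f(\phi_j(x))$, notes it is continuous on the compact set $J\times X$, and concludes that $\bigoplus_{j\in J}F(j,x)$ is continuous in $x$. You merely spell out (via uniform continuity) the standard fact that the paper invokes without proof, which is a harmless elaboration.
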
 	
\begin{proof}
Let $F:J\times X \to \mathbb{R}$ be given by $F(j,x) =q_{j}(x)\odot f(\phi_{j}(x))$. As $\phi,\,q$ and $f$ are continuous, the function $F$ is continuous. As $J$ is compact and $F$ is continuous we have that $\displaystyle{\mathcal{L}_{\phi,q}(f)(x) = \bigoplus_{j\in J} F(j,x)}$ is continuous.
\end{proof}

\begin{definition}\label{def:Markov operator}
	To each mpIFS $\mathcal{S}=(X, \phi, q)$ we assign an operator  $M_{\phi,q}: I(X) \to I(X)$ defined by
	\begin{equation}\label{eq:Markov operator}
		M_{\phi,q}(\mu):= \bigoplus_{j \in J} I_{j}(\mu)
	\end{equation}
	where, $I_{j}(\mu)(f):=\mu( q_{j}\odot (f\circ \phi_{j}))$, for any $f \in C(X, \mathbb{R})  $. An idempotent probability  $\mu \in I(X)$ is called invariant (with respect to the mpIFS) if $M_{\phi,q}(\mu)=\mu$.
\end{definition}

The next  proposition shows that $M_{\phi,q}$ is well defined.

\begin{proposition} If $\mu \in I(X)$ then $M_{\phi,q}(\mu)\in I(X)$.
\end{proposition}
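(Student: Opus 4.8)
The plan is to verify directly that $M_{\phi,q}(\mu)$ satisfies the two defining properties of a max-plus linear functional (homogeneity and additivity) together with the normalization $M_{\phi,q}(\mu)(0)=0$, and that it is finite-valued. Recall $M_{\phi,q}(\mu)(f) = \bigoplus_{j\in J} \mu(q_j \odot (f\circ\phi_j))$. First I would check that each $I_j(\mu)$ is well defined, i.e. that $q_j\odot(f\circ\phi_j) \in C(X,\mathbb{R})$ whenever $f\in C(X,\mathbb{R})$: this is immediate since $q_j$ is continuous (indeed Lipschitz) and $\phi_j$ is continuous. Then each $I_j(\mu)$ is a composition-type functional: homogeneity follows because $\mu(q_j\odot((a\odot f)\circ\phi_j)) = \mu(a\odot(q_j\odot(f\circ\phi_j))) = a\odot\mu(q_j\odot(f\circ\phi_j))$ using max-plus homogeneity of $\mu$ and associativity/commutativity of $\odot$; additivity follows because $(f\oplus g)\circ\phi_j = (f\circ\phi_j)\oplus(g\circ\phi_j)$ pointwise, hence $q_j\odot((f\oplus g)\circ\phi_j) = (q_j\odot(f\circ\phi_j))\oplus(q_j\odot(g\circ\phi_j))$, and then apply max-plus additivity of $\mu$.

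Next I would lift these properties through the supremum $\bigoplus_{j\in J}$. For homogeneity, $M_{\phi,q}(\mu)(a\odot f) = \bigoplus_j (a\odot I_j(\mu)(f)) = a\odot \bigoplus_j I_j(\mu)(f) = a\odot M_{\phi,q}(\mu)(f)$, since adding a constant $a$ commutes with taking a supremum over $j$. For additivity one needs $\bigoplus_j \big(I_j(\mu)(f)\oplus I_j(\mu)(g)\big) = \big(\bigoplus_j I_j(\mu)(f)\big)\oplus\big(\bigoplus_j I_j(\mu)(g)\big)$, which is the elementary identity $\sup_j \max(a_j,b_j) = \max(\sup_j a_j, \sup_j b_j)$. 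This uses that the inner operation is $\max$, so $\oplus$ and $\bigoplus$ interchange freely. For the normalization, $M_{\phi,q}(\mu)(0)(x)$: here $f=0$ is the constant function $\mathbb{1}$, so $q_j\odot(0\circ\phi_j) = q_j$, and $I_j(\mu)(0) = \mu(q_j)$; thus $M_{\phi,q}(\mu)(0) = \bigoplus_j \mu(q_j)$. To show this equals $0$ I would use the normalization \eqref{eq: q normalized}, $\bigoplus_{j\in J} q_j(x)=0$ for all $x$, together with the fact that $\mu$ is order-preserving and $\mu(c)=c$ for constants. From $q_j \le \bigoplus_{k} q_k \equiv 0$ we get $\mu(q_j)\le \mu(0)=0$ for each $j$, hence $\bigoplus_j \mu(q_j)\le 0$; the reverse inequality $\bigoplus_j\mu(q_j)\ge 0$ is the only genuinely nontrivial point.

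The main obstacle is precisely establishing $\bigoplus_{j\in J}\mu(q_j)\ge 0$. The naive bound $\mu(q_j)\ge \mu$ of something is not available pointwise per fixed $j$, since no single $q_j$ dominates $0$. The way I would handle this is via the density $\lambda$ of $\mu$ from Theorem~\ref{teo : densidade Maslov}: write $\mu(q_j)=\bigoplus_{x\in X}\lambda(x)\odot q_j(x)=\sup_{x\in X}(\lambda(x)+q_j(x))$. Then $\bigoplus_{j\in J}\mu(q_j) = \sup_{j}\sup_{x}(\lambda(x)+q_j(x)) = \sup_{x}\sup_{j}(\lambda(x)+q_j(x)) = \sup_x\big(\lambda(x)+\sup_j q_j(x)\big) = \sup_x(\lambda(x)+0) = \sup_x\lambda(x) = 0$, the last equality because $\mu\in I(X)$ forces $\bigoplus_x\lambda(x)=0$. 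This simultaneously gives both inequalities, so the normalization is clean once one invokes the density. Alternatively, to avoid the density entirely, one can observe that $M_{\phi,q}(\mu)(0) = \mu\big(\bigoplus_j q_j\big) = \mu(0) = 0$ directly, provided one first shows $\mu\big(\bigoplus_{j\in J} q_j\big) = \bigoplus_{j\in J}\mu(q_j)$; but that interchange of $\mu$ with an (in general infinite) sup over $j$ is not part of the axioms and itself requires the density or a compactness argument, so routing through Theorem~\ref{teo : densidade Maslov} is the cleaner path. Finally, finiteness of $M_{\phi,q}(\mu)(f)$ for every $f\in C(X,\mathbb{R})$ follows since each $I_j(\mu)(f)\in\mathbb{R}$ and the map $j\mapsto I_j(\mu)(f)$ is continuous on the compact set $J$ — which should be noted for the sup to be attained and finite, using the uniform estimates \eqref{eq: gamma contraction} and \eqref{eq: q lipschitz} to get joint continuity of $(j,x)\mapsto q_j(x)+f(\phi_j(x))$ and then continuity of $j\mapsto\mu(q_j\odot(f\circ\phi_j))$ from continuity of $\mu$ on $C(X,\mathbb R)$. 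Assembling these pieces yields $M_{\phi,q}(\mu)\in I(X)$.
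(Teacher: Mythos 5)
Your proposal is correct and follows essentially the same route as the paper: verify max-plus additivity and homogeneity of each $I_j(\mu)$ and lift them through $\bigoplus_{j\in J}$, then compute $M_{\phi,q}(\mu)(0)=\bigoplus_{j}\bigoplus_{x}[\lambda(x)\odot q_j(x)]$ using the density $\lambda$ from Theorem~\ref{teo : densidade Maslov} and the normalization $\bigoplus_j q_j(x)=0$. Your interchange of the two suprema is just a cleaner arrangement of the paper's two-inequality argument (which instead picks $x_0$ with $\lambda(x_0)=0$), and your extra remarks on well-definedness and finiteness are correct but not essential.
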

\begin{proof} We start by proving that $M_{\phi,q}(\mu) \in C^*(X,\mathbb{R})$. As $\mu\in I(x)$ it is easy to conclude that
$I_j(\mu)(f\oplus g) = 	I_{j}(\mu)(f)\oplus I_{j}(\mu)( g)$ and $I_{j}(\mu)(c\odot f) = c\odot I_{j}(\mu)( f)$ for any  $c\in\mathbb{R}$, $f,g\in  C(X, \mathbb{R})$ and $j\in J$.
It follows that
\[ \bigoplus_{j \in J} I_{j}(\mu)(f\oplus g) = \bigoplus_{j \in J} [I_{j}(\mu)(f)\oplus I_{j}(\mu)( g)] = \left[\bigoplus_{j \in J} I_{j}(\mu)(f)\right]\oplus\left[ \bigoplus_{j \in J}I_{j}(\mu)( g)\right]\]
and
\[\bigoplus_{j \in J} I_{j}(\mu)(c\odot f) = \bigoplus_{j \in J} [c\odot I_{j}(\mu)(f)] = c\odot \bigoplus_{j \in J} I_{j}(\mu)(f).\]

Now we prove that $M_{\phi,q}(\mu)(0) = 0$. Let $\lambda\in U(X, \mathbb{R}_{\max}) $ be the density of $\mu$. As $\mu\in I(X)$ there exists $x_0\in X$ such that $\lambda(x_0) = \oplus_x \lambda(x) = 0$. Then we have
\[M_{\phi,q}(\mu)(0) =  \bigoplus_{j \in J} I_{j}(\mu)(0) =  \bigoplus_{j \in J} \mu( q_{j} ) = \bigoplus_{j \in J} \bigoplus_{x\in X} [\lambda(x) \odot  q_{j}(x)].\]
As $\lambda \leq 0$ and  $q\leq 0$ (due equation \eqref{eq: q normalized}) we get $M_{\phi,q}(\mu)(0)\leq 0$. On the other hand,
\[M_{\phi,q}(\mu)(0) = \bigoplus_{j \in J} \bigoplus_{x\in X} [\lambda(x) \odot  q_{j}(x)] \geq \bigoplus_{j \in J} [\lambda(x_0) \odot  q_{j}(x_0)] = \bigoplus_{j \in J}  q_{j}(x_0)\stackrel{\eqref{eq: q normalized}}{=}0.\]

\end{proof}

\begin{definition}\label{def:bounded functions usual}
We denote by $\mathcal{B}(X, \mathbb{R}_{\max})$  the set of functions from $X$ to $\mathbb{R}_{\max}$ bounded from above.
\end{definition}

From now on we adopt the convention $\bigoplus_{y\in A} f(y) = -\infty$ if $A=\varnothing$.

\begin{definition}\label{def:transfer operator}
   To each mpIFS  we assign a max-plus transfer operator  $L_{\phi,q}: \mathcal{B}(X, \mathbb{R}_{\max}) \to \mathcal{B}(X, \mathbb{R}_{\max})$ defined by
   \begin{equation}\label{eq:transfer operator}
      L_{\phi,q}(\lambda)(x):=\bigoplus_{(j,y)\in \phi^{-1}(x)} q_{j}(y) \odot  \lambda(y)
   \end{equation}
  (if $\{(j,y) | \phi_{j}(y)=x\}=\varnothing$ then $L_{\phi,q}(\lambda)(x)=-\infty$).
\end{definition}

We have $ U(X, \mathbb{R}_{\max}) \subset \mathcal{B}(X, \mathbb{R}_{\max})$ and next proposition shows that we could alternatively  consider $L_{\phi,q}: U(X, \mathbb{R}_{\max}) \to U(X, \mathbb{R}_{\max})$ in Definition \ref{def:transfer operator}.

\begin{proposition} If $\lambda \in  U(X, \mathbb{R}_{\max})$ then $L_{\phi,q}(\lambda)\in  U(X, \mathbb{R}_{\max})$.
	\end{proposition}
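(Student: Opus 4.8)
The plan is to show that $L_{\phi,q}(\lambda)$ is both bounded above and upper semi-continuous with nonempty support, given that $\lambda\in U(X,\mathbb{R}_{\max})$ has these properties. Boundedness from above is immediate: since $q\le 0$ by \eqref{eq: q normalized} and $\lambda$ is bounded above by some constant $M$, each term $q_j(y)\odot\lambda(y)\le M$, so $L_{\phi,q}(\lambda)(x)\le M$ for every $x$; hence $L_{\phi,q}(\lambda)\in\mathcal{B}(X,\mathbb{R}_{\max})$. For nonemptiness of the support, pick $y_0\in\supp(\lambda)$; then $x_0:=\phi_{j_0}(y_0)$ (for any $j_0\in J$) satisfies $L_{\phi,q}(\lambda)(x_0)\ge q_{j_0}(y_0)\odot\lambda(y_0)>-\infty$, so $\supp(L_{\phi,q}(\lambda))\ne\varnothing$.

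The substantive part is upper semi-continuity. First I would introduce the continuous function $G:J\times X\to\mathbb{R}_{\max}$ given by $G(j,y)=q_j(y)\odot\lambda(y)$; note $G$ is u.s.c. on the compact space $J\times X$ as a sum of a continuous function and the u.s.c. function $\lambda\circ\mathrm{pr}_X$ (and bounded above). Writing $\Phi:J\times X\to X$, $\Phi(j,y)=\phi(j,y)=\phi_j(y)$, the definition reads $L_{\phi,q}(\lambda)(x)=\sup_{(j,y)\in\Phi^{-1}(x)}G(j,y)$. To verify u.s.c. at a point $x$, fix $\alpha>L_{\phi,q}(\lambda)(x)$ and consider the set $K_\alpha:=\{(j,y)\in J\times X: G(j,y)\ge\alpha\}$, which is closed (by u.s.c. of $G$), hence compact. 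Its image $\Phi(K_\alpha)$ is compact, therefore closed, and does not contain $x$ (otherwise some $(j,y)\in\Phi^{-1}(x)$ would have $G(j,y)\ge\alpha$, contradicting the choice of $\alpha$). Thus there is an open neighborhood $V$ of $x$ disjoint from $\Phi(K_\alpha)$; for every $x'\in V$ and every $(j,y)\in\Phi^{-1}(x')$ we have $(j,y)\notin K_\alpha$, i.e. $G(j,y)<\alpha$, whence $L_{\phi,q}(\lambda)(x')\le\alpha$. (If $\Phi^{-1}(x')=\varnothing$ then $L_{\phi,q}(\lambda)(x')=-\infty<\alpha$ trivially.) Since $\alpha>L_{\phi,q}(\lambda)(x)$ was arbitrary, this proves $\limsup_{x'\to x}L_{\phi,q}(\lambda)(x')\le L_{\phi,q}(\lambda)(x)$, i.e. $L_{\phi,q}(\lambda)$ is u.s.c.

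The main obstacle is exactly this u.s.c. argument: one must be careful that $\Phi^{-1}(x)$ is in general neither open nor a manifold, so the sup defining $L_{\phi,q}(\lambda)$ is a sup over a varying fiber, and a direct $\varepsilon$-$\delta$ estimate fails. The clean fix is the superlevel-set/compactness argument above, where the key facts are that superlevel sets of the u.s.c. function $G$ on the compact space $J\times X$ are compact and that continuous images of compact sets are compact, hence closed. One auxiliary point worth spelling out is the compactness of $K_\alpha$ in the degenerate case $K_\alpha=\varnothing$, which is handled by the convention $\bigoplus_{y\in\varnothing}=-\infty$ and gives $L_{\phi,q}(\lambda)\equiv-\infty$ near $x$ — but this cannot actually occur on all of $X$ since $\supp(L_{\phi,q}(\lambda))\ne\varnothing$, and locally it only makes u.s.c. easier. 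Finally I would remark that the same computation shows $L_{\phi,q}$ indeed maps into $U(X,\mathbb{R}_{\max})$, so Definition \ref{def:transfer operator} may be restricted to $U(X,\mathbb{R}_{\max})$ as claimed.
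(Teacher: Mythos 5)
Your proof is correct, and it reaches the conclusion by a genuinely different route from the paper. The paper argues sequentially: it splits into the case $\phi^{-1}(x_0)=\varnothing$ (handled by closedness of the image of the compact set $J\times X$ under $\phi$) and the case $\phi^{-1}(x_0)\neq\varnothing$, where for $x_n\to x_0$ it picks $\varepsilon$-near-optimal preimages $(j_n,y_n)$, extracts convergent subsequences, and uses continuity of $\phi$, continuity of $q$ and upper semi-continuity of $\lambda$ to bound the $\limsup$. You instead set $G(j,y)=q_j(y)\odot\lambda(y)$, observe that its superlevel sets $K_\alpha$ are compact in $J\times X$, and use that $\phi(K_\alpha)$ is closed and misses $x$ to produce a neighborhood on which $L_{\phi,q}(\lambda)\leq\alpha$. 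The two arguments rest on the same two pillars (compactness of $J\times X$ and continuity of $\phi$), but your superlevel-set formulation treats the empty-fiber and nonempty-fiber cases uniformly, avoids subsequence bookkeeping, and in fact never uses the Lipschitz structure of $q$ or $\phi$ --- only continuity and compactness --- so it is marginally more general. It also has the merit of explicitly verifying that $\supp(L_{\phi,q}(\lambda))\neq\varnothing$, a requirement of membership in $U(X,\mathbb{R}_{\max})$ that the paper's proof leaves implicit. The paper's sequential version, on the other hand, is the style reused later in the proof of Theorem \ref{teo: main result}, so it serves as a warm-up for those computations. One cosmetic point: your argument yields $L_{\phi,q}(\lambda)\leq\alpha$ on the neighborhood rather than a strict inequality; this is harmless (replace $\alpha$ by any $\alpha'\in(L_{\phi,q}(\lambda)(x),\alpha)$), but worth a half-sentence if you write it up.
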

\begin{proof}
	Initially suppose that $x_0$ is such that $\{(j,y) | \phi_{j}(y)=x_0\}=\varnothing$. Then $L_{\phi,q}(\lambda)(x_0) = -\infty$ and we need to prove that $L_{\phi,q}(\lambda)(x)=-\infty$ for any $x$ in a open set containing $x_0$ (see Definition \ref{def:usc}). This is a direct consequence of the continuity of $\phi:J\times X \to X$ where $J$ and $X$ are compact spaces (its image is closed).
	 	
	Now we suppose that  $x_0$ is such that $\{(j,y) | \phi_{j}(y)=x_0\}\neq\varnothing$. Then     $L_{\phi,q}(\lambda)(x_0):=\bigoplus_{(j,y) \in \phi^{-1}(x_0)} q_{j}(y) \odot  \lambda(y)$. Let $x_n$ be a sequence converging to $x_0$. We want to prove that $\limsup_{n \to \infty} L_{\phi,q}(\lambda)(x_n)\leq L_{\phi,q}(\lambda)(x_0)$ (see Definition \ref{def:usc}). We can suppose that for any $n$ the set $\{(j,y)|\phi_j(y)=x_n\}$ is non empty. Given $\varepsilon>0$ let $(j_n,y_n)$ be such that $\phi_{j_n}(y_n)=x_n$ and $L_{\phi,q}(\lambda)(x_n) \leq q_{j_n}(y_n) \odot  \lambda(y_n)+\varepsilon$. We can suppose there exists $j_0,y_0$ such that $j_n\to j_0$ and $y_n\to y_0$. As $\phi:J\times X \to X$ is continuous we have $x_0 = \phi_{j_0}(y_0)$. Then, as $\lambda$ is u.s.c., we get
	\[\limsup_{n \to \infty} L_{\phi,q}(\lambda)(x_n) \leq \limsup_{n \to \infty} [q_{j_n}(y_n) \odot  \lambda(y_n)]+\varepsilon  \leq q_{j_0}(y_0) \odot \lambda(y_0)+\varepsilon \leq L_{\phi,q}(\lambda)(x_0)+\varepsilon.\]
\end{proof}

Next theorem exhibits the relation between definitions \ref{def:Markov operator}, \ref{def:transfer operator} and \ref{def:mp ruelle operator}.

\begin{theorem}\label{thm: equivalences} Given a function $\lambda \in  U(X, \mathbb{R}_{\max})$ satisfying $\oplus_x \lambda(x) = 0$ and the associated idempotent probability   $\mu=\bigoplus_{x\in X}\lambda(x)\odot\delta_x\in I(X)$ we have that $M_{\phi,q}(\mu)= \bigoplus_{x \in X} L_{\phi,q} (\lambda)(x) \odot \delta_x$, that is, $M_{\phi,q}(\mu)$ has density $L_{\phi,q} (\lambda)$ where $\lambda$ is the density of $\mu$. Furthermore
	$$M_{\phi,q}(\mu)(f) = \mu(\mathcal{L}_{\phi,q}(f)),$$
	for any $f \in C(X, \mathbb{R})$, that is, $M_{\phi,q}$ is the max-plus dual of $\mathcal{L}_{\phi,q}$.
An idempotent probability $\mu$ is invariant if and only if its density $\lambda$ is invariant for $L_{\phi,q}$ that is, it satisfies
\begin{equation}\label{eq:lambda invariant}
		\lambda(x) = \bigoplus_{(j,y)\in\phi^{-1}(x)} q_{j}(y) \odot  \lambda(y).
		\end{equation}
\end{theorem}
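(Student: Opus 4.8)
The plan is to prove three statements in sequence: (a) that $M_{\phi,q}(\mu)$ has density $L_{\phi,q}(\lambda)$; (b) that $M_{\phi,q}(\mu)(f) = \mu(\mathcal{L}_{\phi,q}(f))$; and (c) the equivalence of invariance, which then follows formally from (a) or (b) together with the uniqueness of densities in Theorem~\ref{teo : densidade Maslov}.

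First I would establish (b) by a direct computation. Using $\mu = \bigoplus_{x\in X}\lambda(x)\odot\delta_x$ and the definition $\mathcal{L}_{\phi,q}(f)(x) = \bigoplus_{j\in J} q_j(x)\odot f(\phi_j(x))$, I would write
\[
\mu(\mathcal{L}_{\phi,q}(f)) = \bigoplus_{x\in X} \lambda(x)\odot \bigoplus_{j\in J} q_j(x)\odot f(\phi_j(x)) = \bigoplus_{x\in X}\bigoplus_{j\in J} \lambda(x)\odot q_j(x)\odot f(\phi_j(x)),
\]
where interchanging $\odot$ past $\bigoplus$ is just distributivity of $+$ over $\sup$. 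On the other side, $M_{\phi,q}(\mu)(f) = \bigoplus_{j\in J} I_j(\mu)(f) = \bigoplus_{j\in J}\mu(q_j\odot(f\circ\phi_j)) = \bigoplus_{j\in J}\bigoplus_{x\in X}\lambda(x)\odot q_j(x)\odot f(\phi_j(x))$, and these two expressions agree since the double $\sup$ can be taken in either order. One should check the suprema are attained / the expressions are finite, but this is guaranteed by compactness of $X$ and $J$ together with $\lambda\in U(X,\mathbb{R}_{\max})$, $\supp(\lambda)\neq\varnothing$, and continuity of $q$ and $f$.

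For (a), I would compute the density of $M_{\phi,q}(\mu)$ directly. Grouping the double supremum above over $x$ by the value $z = \phi_j(y)$, i.e., summing over preimages, gives
\[
\mu(\mathcal{L}_{\phi,q}(f)) = \bigoplus_{z\in X}\Big(\bigoplus_{(j,y)\in\phi^{-1}(z)} \lambda(y)\odot q_j(y)\Big)\odot f(z) = \bigoplus_{z\in X} L_{\phi,q}(\lambda)(z)\odot f(z),
\]
with the convention $\bigoplus_{\varnothing} = -\infty$ handling fibers that are empty. Here I must justify the reindexing: every pair $(j,y)\in J\times X$ contributes to exactly the term $z=\phi_j(y)$, and the continuity of $\phi$ with $J,X$ compact makes the image closed, so the partition by fibers is legitimate and no mass is lost. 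Since $L_{\phi,q}(\lambda)\in U(X,\mathbb{R}_{\max})$ by the preceding proposition, the uniqueness clause of Theorem~\ref{teo : densidade Maslov} forces $L_{\phi,q}(\lambda)$ to be the density of $M_{\phi,q}(\mu)$. (One also needs $\bigoplus_z L_{\phi,q}(\lambda)(z) = 0$, which follows because $M_{\phi,q}(\mu)\in I(X)$ by the earlier proposition, so everything is consistent.)

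Finally, (c) is immediate: $\mu$ is invariant iff $M_{\phi,q}(\mu) = \mu$ as functionals, iff they have the same density by the uniqueness in Theorem~\ref{teo : densidade Maslov}, iff $L_{\phi,q}(\lambda) = \lambda$, which is exactly equation~\eqref{eq:lambda invariant}. I expect the only genuinely delicate point to be the fiberwise reindexing in step (a): one must be careful that $\sup$ over $J\times X$ genuinely decomposes as $\sup_z \sup_{\phi^{-1}(z)}$ including the boundary behavior where fibers degenerate, and that the $-\infty$ convention is applied consistently; but compactness of $J\times X$ and continuity of $\phi$ (hence closedness of fibers and of the image) make this routine rather than hard.
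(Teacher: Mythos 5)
Your proposal is correct and follows essentially the same route as the paper: the same double-supremum computation with a fiberwise reindexing over $\phi^{-1}(x)$ to identify the density as $L_{\phi,q}(\lambda)$, the same exchange of suprema to get $M_{\phi,q}(\mu)(f)=\mu(\mathcal{L}_{\phi,q}(f))$, and the same appeal to the uniqueness of the u.s.c.\ density in Theorem~\ref{teo : densidade Maslov} for the invariance equivalence. The only difference is the order in which you establish the two identities, which is immaterial.
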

\begin{proof}
We have
\[    M_{\phi,q}(\mu) (f)=  \bigoplus_{j \in J} \bigoplus_{y\in X}\left(\lambda(y)\odot q_{j}(y)\odot f(\phi_{j}(y))\right)=\bigoplus_{x\in X}\bigoplus_{(j,y)\in\phi^{-1}(x)} \left(\lambda(y)\odot q_{j}(y)\odot f(x)\right)\]
\[ =\bigoplus_{x\in X}\left(\left(\bigoplus_{(j,y)\in \phi^{-1}(x)} \left(\lambda(y)\odot q_{j}(y)\right)\right)\odot f(x)\right)=\bigoplus_{x \in X} L_{\phi,q} (\lambda)(x) \odot f(x).\]
Then $M_{\phi,q}(\mu)= \bigoplus_{x \in X} L_{\phi,q} (\lambda)(x) \odot \delta_x$ has density $L_{\phi,q} (\lambda) \in U(X, \mathbb{R}_{\max})$ and applying Theorem \ref{teo : densidade Maslov} we conclude that $M_{\phi,q}(\mu)=\mu$ if and only if $L_{\phi,q} (\lambda)=\lambda$.
	
Now we will prove that $M_{\phi,q}(\mu)(f) = \mu(\mathcal{L}_{\phi,q}(f)),$
for any $f \in C(X, \mathbb{R})  $.  Denoting by $\lambda$ the density of $\mu$ we have	
$$M_{\phi,q}(\mu)(f)= \bigoplus_{j \in J} I_{j}(\mu) (f)=
\bigoplus_{j \in J} \mu(q_{j}\odot (f\circ\phi_{j}))= \bigoplus_{j \in J} \bigoplus_{x\in X}\lambda(x)\odot q_{j}(x)\odot f(\phi_{j}(x))$$
$$ =\bigoplus_{x\in X}\bigoplus_{j \in J}\lambda(x)\odot q_{j}(x)\odot f(\phi_{j}(x))= \bigoplus_{x\in X}\lambda(x) \odot \left(\bigoplus_{j \in J} q_{j}(x)\odot f(\phi_{j}(x))\right) = \mu(\mathcal{L}_{\phi,q}(f)).$$
\end{proof}

Next result will be useful in the proof of Theorem \ref{teo: main result}.

\begin{corollary} \label{cor: inv Lq is inv M} Given $\lambda \in \mathcal{B}(X, \mathbb{R}_{\max})$ such that $\bigoplus_{x\in X}\lambda(x) = 0$, let $\mu:C(X,\mathbb{R})\to\mathbb{R}$ be given by
	\[\mu(f) := \bigoplus_{x\in X} \lambda(x)\odot f(x).\]
	Then $\mu \in I(X)$. Furthermore, given a mpIFS $\mathcal{S}=(X, \phi, q)$, if $L_{\phi,q}(\lambda)=\lambda$ then $M_{\phi,q}(\mu) = \mu$.
\end{corollary}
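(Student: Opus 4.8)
The plan has two parts. Because $\lambda$ is assumed only to be bounded above — not upper semicontinuous — the functional $\mu$ is \emph{not} produced directly by Theorem~\ref{teo : densidade Maslov}, so I would first check the three defining properties of an element of $I(X)$ by hand. Second, I would observe that the chain of equalities in the proof of Theorem~\ref{thm: equivalences} never uses upper semicontinuity of $\lambda$, only that $\lambda$ is bounded above with $\bigoplus_{x\in X}\lambda(x)=0$; hence it still delivers the identity $M_{\phi,q}(\mu)(f)=\bigoplus_{x\in X}L_{\phi,q}(\lambda)(x)\odot f(x)$, and the hypothesis $L_{\phi,q}(\lambda)=\lambda$ then closes the argument.

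For the first part: given $f\in C(X,\mathbb{R})$, compactness of $X$ gives constants $m\le f\le M$, and together with $\lambda\le\bigoplus_{x\in X}\lambda(x)=0$ this yields $m\le\bigoplus_{x\in X}\lambda(x)\odot f(x)\le M$, so $\mu(f)\in\mathbb{R}$ — the supremum need not be attained, but it is finite, which is all that is needed. Max-plus homogeneity $\mu(a\odot f)=a\odot\mu(f)$ holds because adding the constant $a$ commutes with the supremum. Max-plus additivity follows from the pointwise identity $\lambda(x)\odot(f\oplus g)(x)=\bigl(\lambda(x)\odot f(x)\bigr)\oplus\bigl(\lambda(x)\odot g(x)\bigr)$ combined with $\bigoplus_{x\in X}\bigl(u(x)\oplus v(x)\bigr)=\bigl(\bigoplus_{x\in X}u(x)\bigr)\oplus\bigl(\bigoplus_{x\in X}v(x)\bigr)$. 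Finally $\mu(\mathbb{1})=\mu(0)=\bigoplus_{x\in X}\lambda(x)=0=\mathbb{1}$. Hence $\mu\in I(X)$, and in particular $M_{\phi,q}(\mu)$ is well defined.

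For the second part: writing out $M_{\phi,q}(\mu)(f)=\bigoplus_{j\in J}\bigoplus_{y\in X}\lambda(y)\odot q_{j}(y)\odot f(\phi_{j}(y))$, I would regroup the double supremum over $J\times X$ along the fibres of $\phi$, using $\bigoplus_{j\in J}\bigoplus_{y\in X}=\bigoplus_{x\in X}\bigoplus_{(j,y)\in\phi^{-1}(x)}$ — legitimate because the sets $\phi^{-1}(x)$, $x\in X$, partition $J\times X$, with empty fibres contributing $-\infty$ by the adopted convention — and then pull $f(x)$ out of the inner supremum to obtain $M_{\phi,q}(\mu)(f)=\bigoplus_{x\in X}L_{\phi,q}(\lambda)(x)\odot f(x)$. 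Since $L_{\phi,q}(\lambda)=\lambda$, the right-hand side equals $\bigoplus_{x\in X}\lambda(x)\odot f(x)=\mu(f)$, so $M_{\phi,q}(\mu)=\mu$. I do not expect a genuine obstacle; the only point requiring attention is exactly that $\lambda$ lies merely in $\mathcal{B}(X,\mathbb{R}_{\max})$, so the membership $\mu\in I(X)$ must be verified directly rather than by invoking Theorem~\ref{teo : densidade Maslov}, and one should note in passing that $L_{\phi,q}(\lambda)$ is again bounded above (with $\bigoplus_{x\in X}L_{\phi,q}(\lambda)(x)=\bigoplus_{x\in X}\lambda(x)=0$ under the hypothesis), so all the objects involved stay inside $\mathcal{B}(X,\mathbb{R}_{\max})$. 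The regrouping of suprema is purely set-theoretic and needs no continuity or compactness beyond what was already used to guarantee finiteness.
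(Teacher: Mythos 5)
Your proposal is correct and follows exactly the paper's route: the paper likewise dismisses the membership $\mu\in I(X)$ as ``immediate to check'' (you simply spell out the finiteness, homogeneity, additivity and normalization that this phrase hides) and then reuses the fibrewise regrouping of the double supremum from the proof of Theorem~\ref{thm: equivalences}, observing that it never needs upper semicontinuity of $\lambda$, before invoking $L_{\phi,q}(\lambda)=\lambda$. No gaps; your added remark that the supremum defining $\mu(f)$ may fail to be attained but is still finite is exactly the right point to flag.
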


\begin{proof}
	It is immediate to check that $\mu \in I(X)$. Furthermore the computations at the beginning of the proof of Theorem \ref{thm: equivalences} can be applied for $\lambda \in \mathcal{B}(X, \mathbb{R}_{\max})$ in order to conclude that
	\[    M_{\phi,q}(\mu) (f)= \bigoplus_{x \in X} L_{\phi,q} (\lambda)(x) \odot f(x).\]
	Supposing then  $L_{\phi,q}(\lambda)=\lambda$ we get
		\[    M_{\phi,q}(\mu) (f)= \bigoplus_{x \in X} L_{\phi,q} (\lambda)(x) \odot f(x) =\bigoplus_{x\in X}\lambda(x)\odot f(x) = \mu(f) .\]
\end{proof}

\section{Invariant probabilities for place dependent mpIFS}

\subsection{Non uniqueness of the invariant idempotent probability}\label{Non uniqueness of the invariant idempotent measure for mpIS with variable weights}
In this subsection we provide an example of a mpIFS with place dependent weights having infinitely many invariant idempotent probabilities, in contrast with the constant case from \cite{MZ}, where the invariant idempotent probability is unique. The example we build is very regular, raising the question if there are some additional constraints to hold uniqueness (other than to be constant), and showing that some alternative tool should be put in place to describe these invariant measure in the general setting. We will do that in the Subsection~\ref{sec:Representation of max-plus  invariant densities}.

We consider the mpIFS $\mathcal{S}=(X, \phi, q)$ and the max-plus operator $M_{\phi,q}:I(X)\to I(X)$ given by (see Theorem \ref{thm: equivalences} )
\[M_{\phi,q}(\mu)(f)=\mu(\mathcal{L}_{\phi,q}(f))=\mu(\oplus_{j\in J} [q_j(x) \odot f(\phi_j(x))]).\]

In such operator the weight $q_j(x) \leq 0$ depends of $x$. We will present an example of mpIFS in a such way that $M_{\phi,q}$ has infinitely many invariant idempotent probabilities.

Consider the space $X=\{1,2\}^{\mathbb{N}}$ with the metric $d$ defined by
\[d((x_1,x_2,x_3,...),(y_1,y_2,y_3,...)) = \left(1/2\right)^{min\{i\,|\,x_i\neq y_i\}},\,x\neq y.\]
Let $J=\{1,2\}$ be a set of indices, with the metric satisfying $d(1,2) = 1$, and let us consider the uniformly contractive IFS $\phi_j(x_1,x_2,x_3,...) = (j,x_1,x_2,...)$, which is defined as the inverse branches of the shift map.

Let \[q_j(x) = q_j(x_1,x_2,x_3,...) = \left\{\begin{array}{ll} 0& \text{if}\,\,j=x_1 \\ -1 & \text{if}\,\,j\neq x_1\end{array}\right..\]
Then $q_1(1,x_2,x_3,...) = q_2(2,x_2,x_3,...) = 0$ while $q_1(2,x_2,x_3,...) = q_2(1,x_2,x_3,...) = -1$.

Given $\alpha \in [0,1)$, let $\lambda_\alpha:X\to\mathbb{R}_{\max}$ be defined by:\newline
$\lambda_\alpha(x) = -\infty$ if $x$ has infinitely many 2's and 1's.\newline
$\lambda_\alpha(x) = -n$ if $x$ has $n$ changes of symbols and finitely many 2's.\newline
$\lambda_\alpha(x) = -n-\alpha$ if  $x$ has $n$ changes of symbols and finitely many 1's.\newline
$\lambda_\alpha(1^{\infty})=0$ and $\lambda_\alpha(2^{\infty}) = -\alpha$, where $2^{\infty} := (2,2,2,....)$ and $1^{\infty} := (1,1,1,....)$.

For example,
$$\lambda_\alpha(2,2,2,1^{\infty}) =-1, \; \lambda_\alpha(2,1,1,2,1^{\infty}) =-3 \text{ and } \lambda_\alpha(2,2,2,1,2^\infty)=-2-\alpha.$$

If $x_n\to x$ then for any given $k>0$ we get that the points $x_n$ and $x$ of $\{1,2\}^{\mathbb{N}}$ have the same initial $k$ symbols of $\{1,2\}$, if $n$ is large enough. From this remark and analyzing the cases in definition of $\lambda_\alpha$ it is easy to conclude that $\lambda_\alpha \in U(X, \mathbb{R}_{\max})$. Let us define $\mu_\alpha \in I(X)$ by
\[\mu_\alpha(f) := \oplus_{x \in X} (\lambda_\alpha(x)\odot f(x)).\]

\begin{proposition} For any $\alpha \in [0,1)$ the idempotent probability $\mu_\alpha$, as above defined, is invariant  for the operator $M_{\phi,q}$.
\end{proposition}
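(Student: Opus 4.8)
The plan is to verify that $\lambda_\alpha$ satisfies the invariance equation \eqref{eq:lambda invariant} from Theorem \ref{thm: equivalences}, i.e.\ that $L_{\phi,q}(\lambda_\alpha) = \lambda_\alpha$, and then invoke Corollary \ref{cor: inv Lq is inv M} (noting $\oplus_x \lambda_\alpha(x) = \lambda_\alpha(1^\infty) = 0$) to conclude that $\mu_\alpha = M_{\phi,q}(\mu_\alpha)$. So everything reduces to a direct computation of $L_{\phi,q}(\lambda_\alpha)(x)$ for each $x \in X$. The key structural fact is that the shift map here has exactly two inverse branches: $\phi^{-1}(x)$ for $x = (x_1, x_2, x_3, \dots)$ consists of the two pairs $(1, \sigma x)$ and $(2, \sigma x)$ where $\sigma x = (x_2, x_3, \dots)$ is the shifted sequence. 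Hence
\[
L_{\phi,q}(\lambda_\alpha)(x) = \max\bigl(q_1(\sigma x) + \lambda_\alpha(\sigma x),\; q_2(\sigma x) + \lambda_\alpha(\sigma x)\bigr) = \lambda_\alpha(\sigma x) + \max\bigl(q_1(\sigma x), q_2(\sigma x)\bigr),
\]
and by the normalization \eqref{eq: q normalized} we have $\max(q_1(\sigma x), q_2(\sigma x)) = 0$, so in fact $L_{\phi,q}(\lambda_\alpha)(x) = \lambda_\alpha(\sigma x) + q_{x_1}(\sigma x)$, where the last equality holds because $q_j(\sigma x) = 0$ exactly when $j$ equals the first symbol of $\sigma x$, which is $x_2$... wait, I must be careful: $q_j(\sigma x)$ depends on the first coordinate of $\sigma x$, namely $x_2$, not on $x_1$. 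Let me restate: $L_{\phi,q}(\lambda_\alpha)(x) = \lambda_\alpha(\sigma x)$ since the max over $j$ of $q_j(\sigma x)$ is $0$.

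So the claim becomes the clean identity $\lambda_\alpha(x) = \lambda_\alpha(\sigma x)$ for all $x$, which is visibly false in general (e.g.\ $\lambda_\alpha(2,1^\infty) = -1 \neq 0 = \lambda_\alpha(1^\infty)$). This means I have mis-set-up the operator: $L_{\phi,q}$ uses $\phi^{-1}(x) = \{(j,y) : \phi_j(y) = x\}$, and $\phi_j(y) = (j, y_1, y_2, \dots)$, so $\phi_j(y) = x$ forces $j = x_1$ and $y = \sigma x = (x_2, x_3, \dots)$. Thus $\phi^{-1}(x)$ is the \emph{single} pair $(x_1, \sigma x)$, and
\[
L_{\phi,q}(\lambda_\alpha)(x) = q_{x_1}(\sigma x) + \lambda_\alpha(\sigma x).
\]
Now $q_{x_1}(\sigma x) = 0$ if $x_1 = (\sigma x)_1 = x_2$, and $q_{x_1}(\sigma x) = -1$ if $x_1 \neq x_2$. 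Therefore the required identity is: $\lambda_\alpha(x) = \lambda_\alpha(\sigma x)$ when $x_1 = x_2$, and $\lambda_\alpha(x) = \lambda_\alpha(\sigma x) - 1$ when $x_1 \neq x_2$. This is exactly compatible with the definition of $\lambda_\alpha$ in terms of the number $n$ of symbol changes: deleting the first symbol removes one change precisely when $x_1 \neq x_2$, and the ``finitely many 1's vs.\ finitely many 2's'' dichotomy (controlling the $\alpha$ offset) is preserved under $\sigma$, as are the boundary cases $1^\infty, 2^\infty$.

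First I would set up the observation that $\phi^{-1}(x) = \{(x_1, \sigma x)\}$ is a singleton, reducing \eqref{eq:lambda invariant} to $\lambda_\alpha(x) = q_{x_1}(\sigma x) \odot \lambda_\alpha(\sigma x)$. Then I would run a short case analysis over the defining clauses of $\lambda_\alpha$: (i) $x$ has infinitely many $1$'s and $2$'s — then so does $\sigma x$, both sides are $-\infty$; (ii) $x$ has finitely many $2$'s (so $\lambda_\alpha(x) = -n$ with $n$ the number of changes) — check $n$ decreases by one iff $x_1 \neq x_2$ iff $q_{x_1}(\sigma x) = -1$, and that $\sigma x$ still has finitely many $2$'s, handling the subcase $x = 1^\infty$ separately; (iii) symmetrically for finitely many $1$'s, tracking the extra $-\alpha$; plus the explicit checks $\lambda_\alpha(1^\infty) = q_1(1^\infty) + \lambda_\alpha(1^\infty) = 0 + 0$ and $\lambda_\alpha(2^\infty) = q_2(2^\infty) + \lambda_\alpha(2^\infty) = 0 + (-\alpha)$. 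Finally I would apply Corollary \ref{cor: inv Lq is inv M}. There is no real obstacle here; the only thing requiring care is bookkeeping the boundary cases ($1^\infty$, $2^\infty$, and sequences that are eventually constant) so that the count of symbol changes and the ``finitely many $1$'s / finitely many $2$'s'' alternative behave correctly under one application of the shift — in particular that $\sigma$ never turns a ``finitely many $2$'s'' sequence into a ``finitely many $1$'s'' one or vice versa.
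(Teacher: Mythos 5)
Your proof is correct and follows essentially the same route as the paper: the paper's entire argument is the observation that $\lambda_\alpha(x)=q_j(y)\odot\lambda_\alpha(y)$ whenever $\phi_j(y)=x$ (equivalently, since $\phi^{-1}(x)=\{(x_1,\sigma x)\}$ is a singleton, that $L_{\phi,q}(\lambda_\alpha)=\lambda_\alpha$), followed by an appeal to Theorem \ref{thm: equivalences}; you simply make explicit the case analysis that the paper dismisses as ``a direct consequence of the construction.'' Your self-corrected computation of $\phi^{-1}(x)$ lands on the right singleton, and invoking Corollary \ref{cor: inv Lq is inv M} instead of Theorem \ref{thm: equivalences} is an equally valid way to finish.
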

\begin{proof}
	The key point in the proof is to observe that $\lambda_\alpha$ satisfies
	\begin{equation}\label{eq1.0}
		\lambda_\alpha(x)=q_j(y)\odot \lambda_\alpha(y),\,\,\,\,\text{if}\,\,\,\phi_j(y)=x,
	\end{equation}
	which is a direct consequence of the construction of $\lambda_\alpha$, meaning that $L_{\phi,q}(\lambda_\alpha)=\lambda_\alpha$. Then the proof follows by applying Theorem \ref{thm: equivalences}.
\end{proof}	
	
\subsection{Representation of max-plus  invariant densities}\label{sec:Representation of max-plus  invariant densities}

In this subsection we characterize the invariant idempotent probabilities (see Theorem \ref{teo: main result}). In Theorem \ref{thm: equivalences} it was proven that an idempotent probability is invariant if and
only if its density $\lambda$ is invariant for $L_{\phi,q}$, that is, it satisfies
\eqref{eq:lambda invariant}.  This equation will be used to characterize the density on its support by adapting to our setting the notions of Ma\~{n}\'{e} potential and Aubry set.

We consider a mpIFS $\mathcal{S}=(X, (\phi_j)_{j\in J}, (q_j)_{j\in J})$ and assume that $\mu=\bigoplus_{x\in X}\lambda(x)\odot\delta_x\in I(X)$ is invariant, that is,  $M_{\phi,q}(\mu)=\mu$, or equivalently, its density $\lambda$ satisfies $L_{\phi,q} (\lambda) = \lambda$, where
$$L_{\phi,q} (\lambda)(x)=\bigoplus_{(j,y)\in\phi^{-1}(x)} q_j(y) \odot  \lambda(y).$$

 Initially  we suppose that $\lambda$ satisfies the equation
$$\lambda(x)=\bigoplus_{(j,y)\in\phi^{-1}(x)} q_j(y) \odot  \lambda(y).$$
We fix $x \in X$ such that $\lambda(x)>-\infty$ and suppose there is $(j_1,y_1)\in \phi^{-1}(x)$ such that $\lambda(x)=q_{j_1}(y_{1}) \odot  \lambda(y_{1})$.
 Analogously,  we suppose  there exists $(j_{2},y_2)\in\phi^{-1}(y_1)$ such that
$$\lambda(y_{1})= q_{j_{2}}(y_{2}) \odot  \lambda(y_{2}).$$
Substituting that in the previous inequality and using $y_{1}=\phi_{j_{2}}(y_{2})$ we obtain
$$\lambda(x)= q_{j_1}(\phi_{j_{2}}(y_{2})) \odot  q_{j_{2}}(y_{2}) \odot  \lambda(y_{2}).$$
A repetition of this argument, if possible, produces
$$\lambda(x)= q_{j_1}(\phi_{j_{2}}(\phi_{j_{3}}(y_{3}))) \odot  q_{j_{2}}(\phi_{j_{3}}(y_{3})) \odot q_{j_{3}}(y_{3}) \odot \lambda(y_{3})$$
and more generally, for each $n$,
$$\lambda(x)=q_{j_1}(\phi_{j_2}\circ \cdots \circ\phi_{j_n} (y_n))\odot q_{j_2}(\phi_{j_3}\circ \cdots \circ\phi_{j_n} (y_n))\odot \dots \odot q_{j_n}(y_n)\odot \lambda(y_{n}).$$

Inspired by the above computations (see equation~\eqref{eq: represent density} ahead), we consider the following approach.

\begin{notation}
	Given $x \in X$, $n\in\mathbb{N}$ and a finite sequence $\omega = (j_1,j_2,...,j_n)\in J^{n}$ we denote
	$$\phi_{\omega}(x)=\phi_{(j_1,...,j_n)}(x) := \phi_{j_1}\circ\dots\circ \phi_{j_n}(x)$$
	and
	$${\rm Sum}(\omega,x) :=  q_{j_1}(\phi_{(j_2,...j_n)} (x))\odot q_{j_2}(\phi_{(j_3,...,j_n)} (x))\odot\dots\odot q_{j_n}(x).$$	
\end{notation}

Given $x,y \in X$ and $\varepsilon>0$ we define
\begin{equation}\label{def: S_epsilon} S_\varepsilon(x,y) = \bigoplus_{n\in\mathbb{N}}\left[\bigoplus_{\omega \in J^n \,|\,d(x,\phi_{\omega}(y))<\varepsilon}{\rm Sum}(\omega,y)\right]
\end{equation}
which can be $-\infty$ if the set $\{\omega \in J^n\,|\,d(x,\phi_{\omega}(y))<\varepsilon\}$ is empty for any $n$. Clearly $0<\epsilon < \epsilon' \Rightarrow S_\varepsilon\leq S_{\varepsilon '}$, so we can define the \textit{Ma\~{n}\'{e} potential} $S:X\times X \to [-\infty,0]$, by
\[S(x,y) =\lim_{\varepsilon\to 0} S_\varepsilon(x,y).\]
Let us consider also the  \textit{Aubry set}
\[\Omega = \{x\in X | S(x,x) = 0\}.\]

Historically, the  Ma\~{n}\'{e} potential and the Aubry set were introduced by R. Ma\~{n}\'{e}  in the 90's to study the Aubry-Mather theory (see \cite{Man97}), that is, to characterize the minimizing invariant measures for a Lagrangrian flow. Later, this idea was extended by many authors for several discrete settings such as ergodic optimization (see, for instance, \cite[Definition 24]{CLT01}, where it is denoted as ``\emph{action potential}"), where the purpose is to find invariant measures minimizing a given potential function (see also  \cite{BLL}, \cite{GaribLop08} and \cite{RenArtJai18}).

\begin{lemma}\label{lemma: Sum lipschitz} Given a mpIFS $\mathcal{S}=(X, \phi, q)$, for any $y_1,y_2 \in X$ and $\omega \in J^n$ we have
	\[|{\rm Sum}(\omega,y_1)-{\rm Sum}(\omega,y_2)|< \frac{C}{1-\gamma}d(y_1,y_2),\]
where $\gamma$ and $C$ satisfy \eqref{eq: gamma contraction} and \eqref{eq: q lipschitz}.
\end{lemma}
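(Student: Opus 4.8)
The plan is to estimate the difference term by term along the telescoping structure of $\mathrm{Sum}(\omega,y)$. Write $\omega=(j_1,\dots,j_n)$ and recall
\[
\mathrm{Sum}(\omega,y)=\sum_{k=1}^{n} q_{j_k}\bigl(\phi_{(j_{k+1},\dots,j_n)}(y)\bigr),
\]
with the convention that for $k=n$ the inner map is the identity, so the argument is just $y$. Since the $\odot$ operation is ordinary addition, the difference $\mathrm{Sum}(\omega,y_1)-\mathrm{Sum}(\omega,y_2)$ is a genuine sum of $n$ differences, and by the triangle inequality it suffices to bound each $\bigl|q_{j_k}(\phi_{(j_{k+1},\dots,j_n)}(y_1))-q_{j_k}(\phi_{(j_{k+1},\dots,j_n)}(y_2))\bigr|$.

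First I would invoke the Lipschitz condition \eqref{eq: q lipschitz} on the weights to bound the $k$-th term by $C\cdot d\bigl(\phi_{(j_{k+1},\dots,j_n)}(y_1),\phi_{(j_{k+1},\dots,j_n)}(y_2)\bigr)$. Next I would use the contraction hypothesis \eqref{eq: gamma contraction}: applying it repeatedly (with equal $J$-coordinates, so only the $X$-distance survives each step and gets multiplied by $\gamma$) gives $d\bigl(\phi_{(j_{k+1},\dots,j_n)}(y_1),\phi_{(j_{k+1},\dots,j_n)}(y_2)\bigr)\le \gamma^{\,n-k} d(y_1,y_2)$; here the composition has $n-k$ factors, so the $k=n$ term contributes $\gamma^0 d(y_1,y_2)$ and the $k=1$ term contributes $\gamma^{n-1}d(y_1,y_2)$. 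Summing over $k=1,\dots,n$ yields
\[
\bigl|\mathrm{Sum}(\omega,y_1)-\mathrm{Sum}(\omega,y_2)\bigr|\le C\,d(y_1,y_2)\sum_{k=1}^{n}\gamma^{\,n-k}\le C\,d(y_1,y_2)\sum_{m=0}^{\infty}\gamma^{m}=\frac{C}{1-\gamma}\,d(y_1,y_2),
\]
which is the claimed bound (the strict inequality in the statement follows e.g. because the geometric series is a strict overestimate of the finite sum when $\gamma>0$, or trivially when $d(y_1,y_2)=0$; in any case $\le$ is what the argument naturally gives and suffices for later use).

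I do not anticipate a genuine obstacle here — the lemma is a routine consequence of the two quantitative hypotheses built into the definition of a mpIFS. The only point requiring a little care is getting the bookkeeping of the indices right: making sure that the argument of $q_{j_k}$ in the $k$-th summand is $\phi_{(j_{k+1},\dots,j_n)}(y)$, a composition of exactly $n-k$ contractions, so that the geometric factor is $\gamma^{n-k}$ and the sum of these factors is bounded by $1/(1-\gamma)$ independently of $n$ and of $\omega$. This uniformity in $n$ and $\omega$ is exactly what makes the estimate useful for passing to limits in the definition of the Mañé potential $S(x,y)$.
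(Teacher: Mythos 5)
Your proof is correct and follows essentially the same route as the paper, which compresses the whole argument into a single displayed estimate: apply \eqref{eq: q lipschitz} to each of the $n$ summands, use \eqref{eq: gamma contraction} to contract the inner compositions, and bound the resulting finite geometric sum by $1/(1-\gamma)$. Your index bookkeeping (exponents $\gamma^{n-k}$ for $k=1,\dots,n$, i.e.\ $\gamma^0$ through $\gamma^{n-1}$) is in fact slightly more careful than the paper's displayed sum $\sum_{i=1}^{n}C\gamma^i$, which omits the $\gamma^0$ contribution of the last factor $q_{j_n}(y)$, though the final bound is unaffected.
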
 	
\begin{proof}
From  \eqref{eq: gamma contraction} and \eqref{eq: q lipschitz} we get
\[|{\rm Sum}(\omega,y_1)-{\rm Sum}(\omega,y_2)| \leq \sum_{i=1}^{n} C\cdot\gamma^i \cdot d(y_1,y_2)\leq \frac{C}{1-\gamma}d(y_1,y_2).\]

\end{proof}

\begin{proposition} Given a mpIFS $\mathcal{S}=(X, \phi, q)$, the Aubry set $\Omega$ is non empty.
\end{proposition}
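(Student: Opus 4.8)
The plan is to show that the Aubry set $\Omega$ is nonempty by producing a single point $x_0$ with $S(x_0,x_0)=0$. The natural candidate is a point of the form $x_0 = \lim_n \phi_{(j_1,\dots,j_n)}(z)$ for a well-chosen infinite word $(j_1,j_2,\dots)\in J^{\mathbb N}$ and arbitrary $z\in X$, since the uniform contraction \eqref{eq: gamma contraction} guarantees this limit exists and is independent of $z$. The word should be chosen so that, at every truncation, the relevant weights vanish in the max-plus sense, i.e. ${\rm Sum}((j_1,\dots,j_n),x_0)=0$ for all $n$; this would give $S_\varepsilon(x_0,x_0)\ge 0$ for every $\varepsilon>0$ (taking $y=x_0$ and $\omega=(j_1,\dots,j_n)$ with $n$ large enough that $d(x_0,\phi_\omega(x_0))<\varepsilon$, which holds by contraction), and since $S\le 0$ always, this forces $S(x_0,x_0)=0$.

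First I would use the normalization \eqref{eq: q normalized}: for each $x\in X$ there is at least one $j\in J$ with $q_j(x)=0$ (the supremum is attained because $J$ is compact and $q$ is continuous). I would like to build the word recursively so that each successive coordinate ``realizes the $0$ weight'' at the appropriate point. The subtlety is that ${\rm Sum}(\omega,x)$ involves the weights $q_{j_1}$ evaluated at $\phi_{(j_2,\dots,j_n)}(x)$, $q_{j_2}$ at $\phi_{(j_3,\dots,j_n)}(x)$, etc., so one cannot freely prescribe coordinates one at a time from the left; the evaluation points shift as $n$ grows. The cleanest way around this is a fixed-point / compactness argument: consider the map on $J^{\mathbb N}\times X$ sending $((j_1,j_2,\dots),x)$ to the pair consisting of the shifted word and $\phi_{j_1}(\text{tail image})$, or more directly, look for a point $x_0$ together with an index $j$ such that $\phi_j(x_0)=x_0$ and $q_j(x_0)=0$ — i.e. a fixed point of some branch sitting in the zero set of its weight. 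Such a pair need not exist for an individual branch, so instead I would pass to the (compact) set of invariant-type configurations: on $K:=\{(j,x):q_j(x)=0\}$, which is closed hence compact, the map $(j,x)\mapsto \phi_j(x)$ is continuous, and one can iterate and extract a convergent subsequence to produce a point $x_0$ which is a limit of compositions $\phi_{j_1}\circ\cdots\circ\phi_{j_n}$ with each $q_{j_k}=0$ at the correct evaluation point.

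Concretely, here is the recursion I would run. Pick any $x^{(0)}\in X$ and $j_1$ with $q_{j_1}(x^{(0)})=0$; then I want the next point to be the one whose image under $\phi_{j_1}$ is controlled, so I instead build the word from the ``inside out'': having chosen $j_1,\dots,j_n$, set $p_n=\phi_{(j_1,\dots,j_n)}(x^{(0)})$ and note the $p_n$ form a Cauchy sequence by \eqref{eq: gamma contraction}, converging to some $x_0$. By continuity ${\rm Sum}((j_1,\dots,j_n),x^{(0)})\to {\rm Sum}$ evaluated along the limiting configuration. To get the $0$'s, I would instead choose, at stage $n$, the index $j_n$ so that $q_{j_n}$ vanishes at $\phi_{(j_{n+1},\dots)}$—which is circular—so the honest route is: apply a compactness argument to the shift on $\{1,\dots\}$-words together with the constraint set $K$, producing a two-sided or one-sided configuration $(\dots,j_{-1},j_0)$ and a point $x_0$ with $\phi_{j_0}(\sigma\text{-image})=x_0$ and all constraint weights zero; then reading this configuration as $(j_1,j_2,\dots)$ and truncating gives ${\rm Sum}(\omega_n,x_0)=0$ for all $n$, and $\phi_{\omega_n}(x_0)\to x_0$, whence $S_\varepsilon(x_0,x_0)\ge 0$ for all $\varepsilon$, so $S(x_0,x_0)=0$ and $x_0\in\Omega$.

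The main obstacle is precisely this circularity in prescribing the word: the definition of ${\rm Sum}$ evaluates the $k$-th weight at a point depending on the \emph{later} letters $j_{k+1},\dots,j_n$, so a naive greedy left-to-right construction does not obviously keep all partial sums equal to $0$. I expect the resolution to be a standard but slightly delicate compactness/fixed-point argument in the product space $J^{\mathbb N}\times X$ (or using that the closed nonempty set $K=\{(j,x):q_j(x)=0\}$ together with $\phi$ admits an invariant configuration), plus Lemma~\ref{lemma: Sum lipschitz} to control the error incurred by replacing the true limit point $x_0$ with the finite iterates $p_n$ when estimating ${\rm Sum}$ and the distance $d(x_0,\phi_{\omega_n}(x_0))$.
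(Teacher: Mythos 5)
There is a genuine gap: you correctly diagnose the central difficulty (in ${\rm Sum}(\omega,x)$ the weight $q_{j_k}$ is evaluated at a point depending on the \emph{later} letters, so a left-to-right greedy choice is circular), but you never resolve it. The ``compactness/fixed-point argument in $J^{\mathbb N}\times X$'' and the ``invariant configuration'' on $K=\{(j,x):q_j(x)=0\}$ are invoked but not constructed, and it is not clear what object they would produce. Moreover, even granting such a configuration, your target conclusion --- an infinite word whose truncations satisfy ${\rm Sum}(\omega_n,x_0)=0$ \emph{exactly, evaluated at $x_0$}, with $\phi_{\omega_n}(x_0)\to x_0$ --- does not follow from it: the zeros would hold at the base points of the configuration, and transporting them to $x_0$ via Lemma~\ref{lemma: Sum lipschitz} costs $\tfrac{C}{1-\gamma}\,d(x_0,\text{base point})$, which is not small in general. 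Your own error-control remark (comparing $x^{(0)}$ with the limit $x_0$) suffers from the same problem: that distance is a fixed constant, not $o(1)$.

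The paper's resolution is a reversal-plus-recurrence device that sidesteps the circularity entirely. Build a \emph{forward} orbit: pick $j_1$ with $q_{j_1}(x)=0$, set $x_1=\phi_{j_1}(x)$, pick $j_2$ with $q_{j_2}(x_1)=0$, set $x_2=\phi_{j_2}(x_1)$, and so on; this is well defined by \eqref{eq: q normalized} because each new map is composed on the \emph{outside}. Reading a segment of this word backwards, $\omega_{k,n+1}=(j_k,\dots,j_{n+1})$, every factor of ${\rm Sum}(\omega_{k,n+1},x_n)$ is evaluated exactly at the orbit point where its weight was chosen to vanish, so ${\rm Sum}(\omega_{k,n+1},x_n)=0$ identically --- no limit, no fixed point needed. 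The orbit $(x_n)$ need not converge, so one takes an accumulation point $\tilde{x}$ and, given $\varepsilon>0$, two return times $n<m$ with $x_n,x_m\in B(\tilde{x},\varepsilon/2)$; since $x_m=\phi_{\omega_{m,n+1}}(x_n)$, the contraction \eqref{eq: gamma contraction} gives $d(\tilde{x},\phi_{\omega_{m,n+1}}(\tilde{x}))<\varepsilon$, and Lemma~\ref{lemma: Sum lipschitz} gives ${\rm Sum}(\omega_{m,n+1},\tilde{x})\geq -\tfrac{C}{1-\gamma}\cdot\tfrac{\varepsilon}{2}$, whence $S_\varepsilon(\tilde{x},\tilde{x})\geq -\tfrac{C}{1-\gamma}\cdot\tfrac{\varepsilon}{2}$ and $S(\tilde{x},\tilde{x})=0$. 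Note that the witnessing words here are the segments between successive returns, not prefixes of a single infinite word, and the partial sums at $\tilde{x}$ are only $\varepsilon$-close to $0$ rather than exactly $0$ --- a weaker intermediate statement than the one you aimed for, but sufficient.
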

\begin{proof}
Let $x\in X$ be any point. Let $(j_n)$ be a sequence of points of $J$ satisfying $q_{j_1}(x) = 0 $ and $q_{j_{n+1}}(\phi_{j_n}\circ...\circ\phi_{j_1}(x)) = 0$ for all $n\geq 1$ (such sequence exists from equation \eqref{eq: q normalized}, where $J$ is compact and $q$ is continuous).  Let $\omega_{k,l}:=(j_{k},j_{k-1},...,j_{l})$ for $k\geq l$ and $x_n :=\phi_{\omega_{n,1}}(x)= \phi_{j_n}\circ...\circ\phi_{j_1}(x)$. Then, it follows that $q_{j_{n+1}}(x_n) = 0$ and
	\begin{equation}\label{eq1_aubry}
		 {\rm Sum}(\omega_{k,n+1},x_n)=0,\,\,\,\forall k>n.
	  \end{equation}
	
As $X$ is compact there exists a subsequence $(n_i)$ and a point $\tilde{x}$ such that $x_{n_i}\to \tilde{x}$. We will prove that $\tilde{x} \in \Omega$.

Given $\varepsilon>0$ there exist $n,m\in\mathbb{N}$ such that $n<m$ and $x_n, x_m \in B(\tilde{x},\varepsilon/2)$. Observe that $x_m = \phi_{\omega_{m,n+1}}(x_n)$ and as $\phi$ satisfies \eqref{eq: gamma contraction}   we get 	
\[d(\tilde{x},\phi_{\omega_{m,n+1}}(\tilde{x})) \leq d(\tilde{x},x_m)+ d(x_m, \phi_{\omega_{m,n+1}}(\tilde{x})) =d(\tilde{x},x_m)+ d(\phi_{\omega_{m,n+1}}(x_n), \phi_{\omega_{m,n+1}}(\tilde{x}))<\varepsilon.\]
It follows that
\[S_{\varepsilon}(\tilde{x},\tilde{x})\geq {\rm Sum}(\omega_{m,n+1},\tilde{x}).\]
Therefore, applying Lemma \ref{lemma: Sum lipschitz} and equation \eqref{eq1_aubry} we have
\[S_{\varepsilon}(\tilde{x},\tilde{x})\geq - \frac{C}{1-\gamma}\cdot \frac{\varepsilon}{2}\]
and so $S(\tilde{x},\tilde{x}) =\lim_{\varepsilon\to 0} S_{\varepsilon}(\tilde{x},\tilde{x}) \geq 0.$ As we always have  $S=\lim_{\varepsilon\to 0} S_{\varepsilon}\leq 0$, by \eqref{eq: q normalized}, we complete the proof.
\end{proof}

The next proposition is somehow analogous to the one founded in \cite[Proposition 23, (ii)]{CLT01}.

\begin{proposition}\label{prop:triangular} For any $x,y,z\in X$  we have
	\[S(x,y)\odot S(y,z) \leq S(x,z).\]
\end{proposition}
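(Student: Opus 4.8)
The plan is to prove the ``triangle inequality'' for the Ma\~{n}\'{e} potential by concatenating the finite orbit words defining $S_\varepsilon(x,y)$ and $S_\varepsilon(y,z)$, passing to the limit $\varepsilon\to 0$ at the end. The first observation is the multiplicativity of ${\rm Sum}$ under concatenation: if $\omega = (j_1,\dots,j_n)\in J^n$ and $\tau = (i_1,\dots,i_m)\in J^m$, then the concatenated word $\omega\tau := (j_1,\dots,j_n,i_1,\dots,i_m)\in J^{n+m}$ satisfies $\phi_{\omega\tau}(z) = \phi_\omega(\phi_\tau(z))$ and, directly from the definitions,
\[
{\rm Sum}(\omega\tau, z) = {\rm Sum}(\omega, \phi_\tau(z)) \odot {\rm Sum}(\tau, z).
\]
The first factor here is ${\rm Sum}$ evaluated at $\phi_\tau(z)$ rather than at $y$, which is where the Lipschitz estimate of Lemma~\ref{lemma: Sum lipschitz} enters.

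Next I would fix $x,y,z\in X$ and $\varepsilon>0$. If either $S(x,y) = -\infty$ or $S(y,z) = -\infty$ the inequality $S(x,y)\odot S(y,z)\leq S(x,z)$ is trivial (the left side is $-\infty$), so assume both are finite. Pick $\delta>0$ small (to be chosen as a function of $\varepsilon$). By definition of $S$ as the monotone limit of $S_\varepsilon$, choose $\omega\in J^n$ with $d(x,\phi_\omega(y))<\delta$ and ${\rm Sum}(\omega,y)$ close to (or exceeding, up to a small error) $S(x,y)$, and choose $\tau\in J^m$ with $d(y,\phi_\tau(z))<\delta$ and ${\rm Sum}(\tau,z)$ close to $S(y,z)$. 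Now estimate $d(x,\phi_{\omega\tau}(z)) = d(x,\phi_\omega(\phi_\tau(z)))$: by the triangle inequality it is at most $d(x,\phi_\omega(y)) + d(\phi_\omega(y),\phi_\omega(\phi_\tau(z)))$, and since $\phi_\omega = \phi_{j_1}\circ\cdots\circ\phi_{j_n}$ is a composition of $\gamma$-contractions, the second term is at most $\gamma\, d(y,\phi_\tau(z)) < \gamma\delta$. Hence $d(x,\phi_{\omega\tau}(z)) < \delta + \gamma\delta < 2\delta$. Using the concatenation identity and Lemma~\ref{lemma: Sum lipschitz} applied to $\omega$ at the points $\phi_\tau(z)$ and $y$ (which are within $\delta$ of each other),
\[
{\rm Sum}(\omega\tau,z) = {\rm Sum}(\omega,\phi_\tau(z))\odot{\rm Sum}(\tau,z) \geq {\rm Sum}(\omega,y)\odot{\rm Sum}(\tau,z) - \frac{C}{1-\gamma}\delta.
\]
Therefore $S_{2\delta}(x,z)\geq {\rm Sum}(\omega\tau,z)$, which is bounded below by (something close to) $S(x,y)\odot S(y,z)$ minus an error that is $O(\delta)$ plus the approximation errors in the choice of $\omega,\tau$.

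Finally I would let $\delta\to 0$: the left side $S_{2\delta}(x,z)$ decreases to $S(x,z)$, while the right side converges to $S(x,y)\odot S(y,z)$ once all auxiliary errors are sent to $0$. This yields $S(x,z)\geq S(x,y)\odot S(y,z)$, which is the claim. The only mildly delicate point is the bookkeeping of the three small quantities — the two approximation gaps from selecting near-optimal words $\omega$ and $\tau$, and the geometric error $\tfrac{C}{1-\gamma}\delta$ from Lemma~\ref{lemma: Sum lipschitz} — but since all three can be made arbitrarily small independently of each other (first choose $\varepsilon$, then pick $\delta$ small, then pick the words), there is no real obstacle; one should just be careful that $S(x,y)$ and $S(y,z)$ being defined as limits (suprema) means we take words achieving values within any prescribed tolerance. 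One subtlety worth flagging: the monotone convergence $S_\varepsilon \downarrow S$ as $\varepsilon\downarrow 0$ is used in both directions, and for the right-hand side we only need that $S_\delta(x,y)\geq S(x,y)$ and $S_\delta(y,z)\geq S(y,z)$ for every $\delta>0$, which is immediate from $S = \inf_\delta S_\delta \le S_\delta$.
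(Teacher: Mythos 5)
Your proposal is correct and follows essentially the same route as the paper's proof: concatenate near-optimal words for $S_{\varepsilon}(x,y)$ and $S_{\varepsilon}(y,z)$, control $d(x,\phi_{\omega\tau}(z))$ via the triangle inequality and the contraction property, transfer ${\rm Sum}(\omega,\cdot)$ from $\phi_\tau(z)$ to $y$ using Lemma~\ref{lemma: Sum lipschitz}, and pass to the limit. The only differences are cosmetic bookkeeping of the small parameters.
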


\begin{proof}
We can suppose $S(x,y)>-\infty$ and $S(y,z)>-\infty$. As $0<\epsilon_1<\epsilon_2 \Rightarrow S_{\epsilon_1}\leq S_{\epsilon_2}$ and $S=\lim_{\epsilon \to 0}S_{\epsilon}$ we get $S_{\epsilon}(x,y)>-\infty$ and $S_{\epsilon}(y,z)>-\infty$ for any $\epsilon>0$. Given any $\varepsilon>0$, as $S_{\epsilon/2}$ is obtained by a supremum (see \eqref{def: S_epsilon}), there exist finite sequences $\omega=(i_1,...,i_n)$ and $\eta=(j_1,...,j_m)$ such that
	$d(x, \phi_{\omega}(y)) < \varepsilon/2$, $d(y, \phi_{\eta} (z))<\varepsilon/2$
	and
	\[S_{\varepsilon/2}(x,y) + S_{\varepsilon/2}(y,z)-\varepsilon < {\rm Sum}(\omega,y)+{\rm Sum}(\eta,z).\]
	We denote by $\omega\circ\eta:=(i_1,...,i_n,j_1,...,j_m)$. Consider the points $\tilde{y} =\phi_\eta(z)$ and $\tilde{x} = \phi_\omega(\tilde{y}) = \phi_\omega\circ\phi_\eta(z)=\phi_{\omega\circ\eta}(z)$. As $\phi$ is a contraction we get
	\[d(x,\tilde{x})\leq d(x,\phi_\omega(y))+ d(\phi_\omega(y),\phi_\omega(\tilde{y}))<\frac{\varepsilon}{2}+ \gamma^{n}\frac{\varepsilon}{2} <  \varepsilon.\]
It follows that
	\[S_\varepsilon(x,z) \geq {\rm Sum}(\omega\circ\eta, z) = {\rm Sum}(\omega,\phi_\eta(z))\odot  {\rm Sum}(\eta,z) = {\rm Sum}(\omega,\tilde{y})\odot  {\rm Sum}(\eta,z).\]
	By applying Lemma \ref{lemma: Sum lipschitz} we get
\[S_\varepsilon(x,z) \geq -\frac{C}{1-\gamma}d(y,\tilde{y})\odot  {\rm Sum}(\omega,y)\odot  {\rm Sum}(\eta,z).\]
  Therefore
	\[S_\varepsilon(x,z) > -\frac{C}{1-\gamma}\frac{\varepsilon}{2} \odot  S_{\varepsilon/2}(x,y) \odot  S_{\varepsilon/2}(y,z)-\varepsilon.\]
Finally, as $\varepsilon \to 0$ we get
	\[S(x,z) \geq S(x,y) \odot  S(y,z).\]
\end{proof}

Using the Ma\~{n}\'{e} potential $S(\cdot,\cdot)$ and the Aubry set $\Omega$ we are able to build candidates to be densities of invariant idempotent probabilities. In some sense, those are determined by its value at some point of the Aubry set plus the value of the Ma\~{n}\'{e} potential between these two points.

Let $x_0$ be a point of $\Omega$. Then consider any function $\lambda_1:\Omega \to [-\infty, 0]$ satisfying $\lambda_1(x_0) = 0$.
Let $\overline{\lambda}:X \to [-\infty,0]$ be the function defined by
\[\overline{\lambda}(x) := \bigoplus_{y\in \Omega}[ S(x,y) \odot \lambda_{1}(y)].\]
Observe that $\overline{\lambda}\leq 0$ because $S\leq 0$ and $\lambda_1\leq 0$. We also have $\overline{\lambda}(x_0)=0$ because
\[\overline{\lambda}(x_0) \geq S(x_0,x_0)\odot\lambda_1(x_0) = \lambda_1(x_0)=0.\]

We claim that the function $\overline{\lambda}$ satisfies
\[\overline{\lambda}(x) = \bigoplus_{z\in \Omega}( S(x,z) \odot \overline{\lambda}(z)).\]
Indeed,  for any $y\in \Omega$ we have
\[S(x,y) \stackrel{Prop. \,\ref{prop:triangular}}{\geq} \sup_{z \in \Omega} S(x,z)\odot S(z,y) \stackrel{y\in \Omega}{\geq}  S(x,y)\odot S(y,y) \stackrel{y\in \Omega}{=} S(x,y).\]
Consequently  $S(x,y) = \sup_{z \in \Omega} S(x,z)\odot S(z,y), \,\,\forall y\in \Omega$. Then\footnote{For any sets $A$ and $B$ and any function $f:A\times B \to \mathbb{R}_{\max}$ bounded from above, by definition of supremum, we have\,\, $\displaystyle{\bigoplus_{a\in A}\left[\bigoplus_{b\in B}f(a,b)\right] = \bigoplus_{(a,b)\in A\times B}f(a,b) = \bigoplus_{b\in B}\left[\bigoplus_{a\in A}f(a,b)\right].}$}
\[\overline{\lambda}(x) = \bigoplus_{y\in \Omega}( S(x,y) \odot \lambda_1(y)) = \bigoplus_{y\in \Omega}\bigoplus_{z\in\Omega}( S(x,z) \odot S(z,y)\odot \lambda_1(y)) \]\[=\bigoplus_{z\in \Omega}( S(x,z) \odot \bigoplus_{y\in \Omega} ( S(z,y)\odot  \lambda_1(y))) = \bigoplus_{z\in \Omega}( S(x,z) \odot \overline{\lambda}(z)).\]
This proves the claim.

\bigskip

Now we are able to prove the main result of the present work. We will show that for any function $\lambda_1$ as above and the associated function $\overline{\lambda}$ we get an idempotent probability  which is invariant for $M_{\phi,q}$ and reciprocally.  In the theorem below we are not claiming that $\overline{\lambda}$ is u.s.c. which is not a problem, since from Corollary \ref{cor: inv Lq is inv M} we only need $\overline{\lambda}$ to be bounded from above to define an idempotent probability. Corollary \ref{cor: inv Lq is inv M} plays an important hole in the proof of the first part of next theorem.

\begin{theorem}\label{teo: main result} Suppose that a function $\overline{\lambda}:X\to[-\infty,0]$ satisfies $\bigoplus_{x\in X}\overline{\lambda}(x) = 0$ and
	\[\overline{\lambda}(x) = \bigoplus_{z\in \Omega}( S(x,z) \odot \overline{\lambda}(z)) . \]
Then the functional $\overline{\mu}\in C^*(X,\mathbb{R})$, defined by $\overline{\mu}(f) :=\bigoplus_{x\in X} (\overline{\lambda}(x)\odot f(x))$, is an idempotent probability  which is invariant for the operator $M_{\phi,q}$. Reciprocally, if $\mu \in I(X)$ is invariant for $M_{\phi,q}$ and $\lambda \in U(X,\mathbb{R}_{\max})$ is its (unique u.s.c.) density, then $\lambda$ satisfies the equation
	\begin{equation}\label{eq: represent density}
	  \lambda(x) = \bigoplus_{z\in \Omega} [S(x,z) \odot \lambda(z)].
	\end{equation}
\end{theorem}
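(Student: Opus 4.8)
For the first (direct) implication, the strategy is to reduce everything to the invariance of $\overline{\lambda}$ under $L_{\phi,q}$ and then invoke Corollary~\ref{cor: inv Lq is inv M}. Concretely, I would first record that $\overline{\mu}\in I(X)$ is immediate from Corollary~\ref{cor: inv Lq is inv M}, since $\overline{\lambda}$ is bounded above with $\bigoplus_x\overline{\lambda}(x)=0$. The substantive point is to show $L_{\phi,q}(\overline{\lambda})=\overline{\lambda}$. The key observation is a ``one-step'' version of the triangle inequality from Proposition~\ref{prop:triangular}, relating $S(x,z)$ for $x=\phi_j(y)$ to $q_j(y)\odot S(y,z)$: on one hand, prepending a single symbol to an admissible $\varepsilon$-chain from $y$ to $z$ gives an $\varepsilon$-chain (up to a controlled error, using the contraction) from $\phi_j(y)$ to $z$, so $S(\phi_j(y),z)\geq q_j(y)\odot S(y,z)$; summing over $(j,y)\in\phi^{-1}(x)$ gives $\bigoplus_{(j,y)\in\phi^{-1}(x)}q_j(y)\odot S(y,z)\le S(x,z)$. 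Conversely, every $\varepsilon$-chain of positive length from $x$ to $z$ factors through its first step $x=\phi_{j_1}(y_1)$, which yields the reverse inequality after letting $\varepsilon\to 0$ and using Lemma~\ref{lemma: Sum lipschitz} to absorb the error. Granting this, one computes
\[
L_{\phi,q}(\overline{\lambda})(x)=\bigoplus_{(j,y)\in\phi^{-1}(x)}q_j(y)\odot\bigoplus_{z\in\Omega}\big(S(y,z)\odot\overline{\lambda}(z)\big)
=\bigoplus_{z\in\Omega}\Big(\overline{\lambda}(z)\odot\bigoplus_{(j,y)\in\phi^{-1}(x)}q_j(y)\odot S(y,z)\Big)
=\bigoplus_{z\in\Omega}\big(S(x,z)\odot\overline{\lambda}(z)\big)=\overline{\lambda}(x),
\]
swapping the two suprema as in the footnote, and then Corollary~\ref{cor: inv Lq is inv M} gives $M_{\phi,q}(\overline{\mu})=\overline{\mu}$.

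For the converse, let $\mu\in I(X)$ be invariant with u.s.c.\ density $\lambda$, so $L_{\phi,q}(\lambda)=\lambda$ by Theorem~\ref{thm: equivalences}. I would argue the two inequalities in \eqref{eq: represent density} separately. For ``$\geq$'': iterating $L_{\phi,q}(\lambda)=\lambda$ as in the motivating computation preceding the Notation block shows that for every $x$ with $\lambda(x)>-\infty$ and every $\varepsilon>0$ one can produce (using compactness of $J$ to select an optimizing branch at each step, and a subsequential limit of the resulting points $y_n$) a point $z\in\Omega$ and finite words bringing an iterate close to $z$, with $\lambda(x)\leq {\rm Sum}(\omega,y_n)\odot\lambda(y_n)$; passing through the Aubry point and using Lemma~\ref{lemma: Sum lipschitz} plus upper semicontinuity of $\lambda$ yields $\lambda(x)\leq S(x,z)\odot\lambda(z)$, hence $\lambda(x)\leq\bigoplus_{z\in\Omega}S(x,z)\odot\lambda(z)$. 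Actually the more robust route for ``$\geq$'' is: $L_{\phi,q}(\lambda)=\lambda$ gives by induction $\lambda(x)\ge {\rm Sum}(\omega,y)\odot\lambda(y)$ whenever $\phi_\omega(y)=x$ — no, one must be careful here since $L_{\phi,q}$ is a supremum, so in fact $\lambda(x)\geq {\rm Sum}(\omega,y)\odot\lambda(y)$ for \emph{all} such $\omega,y$; taking suprema over admissible $\varepsilon$-chains ending near a given $z$ and using Lemma~\ref{lemma: Sum lipschitz} to pass to $\phi_\omega(y)=x$ exactly gives $\lambda(x)\ge S_\varepsilon(x,z)\odot\lambda(z)$ up to $O(\varepsilon)$, hence $\lambda(x)\ge S(x,z)\odot\lambda(z)$ for every $z$, in particular for $z\in\Omega$. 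For ``$\leq$'': here one genuinely needs the Aubry set. Starting from $x$ with $\lambda(x)>-\infty$, build an infinite backward orbit $x=\phi_{j_1}(y_1)$, $y_1=\phi_{j_2}(y_2),\dots$ realizing the supremum in $L_{\phi,q}(\lambda)=\lambda$ at each stage (again using compactness of $J$ to attain the max); then $\lambda(x)={\rm Sum}(\omega_{1,n},y_n)\odot\lambda(y_n)$ for all $n$. A subsequence $y_{n_i}\to z$; the ``accumulation is in $\Omega$'' argument from the proof that $\Omega\neq\varnothing$ (choosing two far-apart indices in the backward orbit that are both close to $z$, and concatenating) shows $z\in\Omega$, and $\lambda(x)= {\rm Sum}(\omega_{1,n_i},y_{n_i})\odot\lambda(y_{n_i})\leq S_\varepsilon(x,z)\odot\lambda(y_{n_i})+O(\varepsilon)$ for $i$ large, whence $\lambda(x)\leq S(x,z)\odot\lambda(z)$ by upper semicontinuity of $\lambda$, giving $\lambda(x)\leq\bigoplus_{z\in\Omega}S(x,z)\odot\lambda(z)$.

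The main obstacle I anticipate is the ``$\leq$'' direction of the converse: one must simultaneously (i) extract a backward orbit that is genuinely optimal at every step, which requires the supremum defining $L_{\phi,q}$ to be attained — this is where compactness of $J$ and continuity of $q$ are essential, and one should phrase it carefully as attaining the max in $\lambda(y_{k-1})=\bigoplus_{(j,y)\in\phi^{-1}(y_{k-1})}q_j(y)\odot\lambda(y)$; (ii) show the accumulation point $z$ of the backward orbit lies in $\Omega$, reusing verbatim the chain-concatenation estimate from the proof that $\Omega\neq\varnothing$; and (iii) control the error terms when transferring the estimate from the orbit points $y_{n_i}$ to the limit $z$, which combines Lemma~\ref{lemma: Sum lipschitz} with the upper semicontinuity of $\lambda$ (the latter to pass from $\limsup_i\lambda(y_{n_i})$ to $\lambda(z)$). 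A secondary technical point is the one-step triangle inequality $S(\phi_j(y),z)=\bigoplus_{(j',y')\in\phi^{-1}(\phi_j(y))}q_{j'}(y')\odot S(y',z)$ used in the direct implication: while it follows the same pattern as Proposition~\ref{prop:triangular}, one must check both inequalities, the nontrivial one being that every admissible $\varepsilon$-chain from $\phi_j(y)$ of length $\geq 1$ decomposes through its first symbol, and that the geometric error from the contraction stays below $\varepsilon$.
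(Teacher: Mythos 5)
Your proposal matches the paper's proof in all essentials: the direct implication is obtained via the one-step identity $\bigoplus_{(j,y)\in\phi^{-1}(x)}q_j(y)\odot S(y,z)=S(x,z)$ (proved by prepending a symbol to near-optimal chains for one inequality and peeling off the first symbol plus a compactness/subsequence argument for the other) followed by Corollary~\ref{cor: inv Lq is inv M}, and the converse is obtained by iterating $L_{\phi,q}(\lambda)=\lambda$ along a near-optimal backward orbit, showing its accumulation points lie in the Aubry set, and transferring the estimate via Lemma~\ref{lemma: Sum lipschitz} and upper semicontinuity of $\lambda$. The only deviations are cosmetic: you select exact maximizers at each backward step (legitimate, since $q_j(y)\odot\lambda(y)$ is u.s.c.\ on the compact set $\phi^{-1}(x)$) where the paper uses $\varepsilon\cdot 2^{-n}$-almost-maximizers, and you should still explicitly dispose of the case $\phi^{-1}(x)=\varnothing$ in the direct implication, where both sides of the one-step identity equal $-\infty$ because $\phi(J\times X)$ is closed.
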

\begin{proof}  We will prove that $\overline{\lambda}$ is invariant for the operator $L_{\phi,q}$ and then apply Corollary \ref{cor: inv Lq is inv M}.  In this way, we need to show that $\overline{\lambda}$ satisfies, for any $x\in X$,
$$ \bigoplus_{(j,y)\in\phi^{-1}(x)} q_j(y) \odot   \overline{\lambda}(y)=  \overline{\lambda}(x). $$
Equivalently, for any $x\in X$,
$$ \bigoplus_{(j,y)\in\phi^{-1}(x)} \left(q_j(y) \odot  \bigoplus_{z\in \Omega} [S(y,z) \odot   \overline{\lambda}(z)]\right)=  \bigoplus_{w\in \Omega} [S(x,w) \odot   \overline{\lambda}(w)]. $$
We will prove that for any $z\in \Omega$ and $x\in X$ we have
$$ \bigoplus_{(j,y)\in\phi^{-1}(x)} q_j(y) \odot   S(y,z) =   S(x,z).$$

Given $x\in X$, if there is not $(j,y)$ such that $\phi_j(y)=x$ then, using that $\phi:J\times X\to X$ has a closed image (it is continuous with a compact domain), there exists and $\varepsilon>0$ such that
$$d(x,\tilde{x})<\varepsilon \Rightarrow [\nexists \,(j,y)\,\,s.t. \,\,\phi_j(y)=\tilde{x}].$$
In this case $S_{\varepsilon}(x,z)=-\infty$ for any $z\in X$ and the equation $$ \bigoplus_{(j,y)\in\phi^{-1}(x)} q_j(y) \odot   S(y,z) =   S(x,z) $$ is satisfied as $-\infty=-\infty$.

Given $x \in X, \, z\in \Omega$ and $y,j$ such that $\phi_{j}(y)=x$ we will prove now that $S(x,z) \geq q_j(y) \odot   S(y,z)$.
Supposing $S(y,z)\neq -\infty$, from the definition of $S(\cdot,\cdot)$ there exists a sequence $y_n\to y$ in the form $y_n = \phi_{\omega_n}(z)$
such that $${\rm Sum}(\omega_n,z)\to S(y,z).$$
Let $x_n =\phi_j(y_n)$. Then $x_n \to x$ and $x_n = \phi_j\circ\phi_{\omega_n}(z)$. By definition of $S(x,z)$ we have
\[S(x,z) \geq  \limsup_{n\to\infty} [q_{j}(\phi_{\omega_n}(z))\odot  {\rm Sum}(\omega_n,z)] = q_j(y) \odot   S(y,z).\]

Now, given $x \in X$ such that $\{(j,y)\,|\,\phi_j(y)=x\}\neq \varnothing$ and $z\in \Omega$ we will show that there exist $y,j$ satisfying $\phi_{j}(y)=x$ and $S(x,z) \leq q_j(y) \odot   S(y,z)$.
From the definition of $S(\cdot,\cdot)$ there exists a sequence $\omega_n = (j_1^n,...,j_{k(n)}^n)$ such that $\phi_{\omega_n}(z) \to x$ and ${\rm Sum}(\omega_n,z) \to S(x,z)$. We define $\eta_n :=(j_2^n,...,j_{k(n)}^n)$, $x_n := \phi_{\omega_n}(z)$ and $y_n := \phi_{\eta_n}(z)$ (if $k(n)=1$ for some $n$ then $\eta_n$ doesn't exist and by convention we consider $y_n = z$ and denote ${\rm Sum}(\eta_n,z)=0$).

By taking a subsequence we can suppose there exists $j\in J$ such that $j_1^n \to j$, as $n\to\infty$.  By taking a subsequence again we can suppose there exists $y\in X$ such that $y_n \to y$ as $n\to\infty$. As $\phi$ is continuous in $J\times X$ we have $\phi_j(y) = x$. Furthermore,
\[q_{j}(y) \odot   S(y,z) \geq q_{j}(y) \odot   \limsup_{n\to\infty} \,{\rm Sum}(\eta_n,z)= \limsup_{n\to\infty} [q_{j_1^n}(y_n) \odot  {\rm Sum}(\eta_n,z)]\]
\[=\limsup_{n\to\infty}\, {\rm Sum}(\omega_n,z) = S(x,z).\]

These cases conclude the proof that $L_{\phi,q}(\overline{\lambda})=\overline{\lambda}$. By applying Corollary \ref{cor: inv Lq is inv M} we conclude that $\overline{\mu}(f) :=\bigoplus_{x\in X} (\overline{\lambda}(x)\odot f(x))$ is an idempotent probability  satisfying $M_{\phi,q}(\overline{\mu})=\overline{\mu}$.

Reciprocally, suppose now that $\mu \in I(X)$ satisfies $M_{\phi,q}(\mu)=\mu$. Let $\lambda\in U(X,\mathbb{R}_{\max}) $ be its unique density according to Theorem \ref{teo : densidade Maslov}. By applying Theorem \ref{thm: equivalences} we get that
\begin{equation}\label{eq1}
	\bigoplus_{(j,y)\in \phi^{-1}(x)} q_j(y) \odot   \lambda(y)=  \lambda(x).
\end{equation}
It is necessary to prove that
	\[\lambda(x) = \bigoplus_{z\in \Omega} [S(x,z) \odot   \lambda(z)].\]
We start by proving that for any $z \in \Omega$ we have	
\[\lambda(x) \geq  S(x,z) \odot   \lambda(z).\]
 Indeed, if there is not a sequence of points in the form $x_n = \phi_{\omega_n}(z) =\phi_{j_1^n}\circ...\circ \phi_{j_{k(n)}^n}( z)$ such that $x_n \to x$ then $S(x,z)=-\infty$ and the statement is satisfied. Suppose now that there exists a such sequence. Then by applying recursively equation \eqref{eq1} we have
\[\lambda(x_n) \geq  q_{j_1^n}(\phi_{j_2^n}\circ...\circ \phi_{j_{k(n)}^n} (z)) \odot  q_{j_2^n}( \phi_{j_3^n}\circ...\circ \phi_{j_{k(n)}^n} (z))\odot  q_{j_{k(n)}^n}(z) \odot   \lambda(z) = {\rm Sum}(\omega_n,z)\odot  \lambda(z).\]
As $x_n\to x$ and $\lambda$ is u.s.c. we get
\[\lambda(x) \geq \limsup_{n\to\infty} \lambda(x_n) \geq 	\limsup_{n\to\infty}\, {\rm Sum}(\omega_n,z) \odot   \lambda(z).\]
Taking the supremum over any such sequence $\omega_n$ we get
\[\lambda(x) \geq S(x,z) \odot   \lambda(z).\]

Now we will to prove the reverse inequality. Supposing that $\lambda(x_0)>-\infty$, we will show that, for each $\varepsilon>0$, there exists a point $z\in \Omega$ such that
\[\lambda(x_0) \leq  S(x,z) \odot   \lambda(z)\odot  \varepsilon.\]

Given $\varepsilon>0$, by applying equation \eqref{eq1} recursively, we obtain sequences $x_1,x_2,...$ and $j_0,j_1,j_2,...$ satisfying $\phi_{j_{n-1}}(x_n) =x_{n-1}$ and
\[q_{j_{n-1}}(x_n) \odot   \lambda(x_n)\geq  \lambda(x_{n-1}) -\varepsilon\cdot 2^{-n}.\]
It follows that
\[\lambda(x_0) \leq q_{j_0}(x_1) \odot   \lambda(x_1) \odot  \varepsilon \cdot 2^{-1} \leq q_{j_0}(x_1) \odot   q_{j_1}(x_2)\odot   \lambda(x_2) \odot  \varepsilon \cdot (2^{-1}\odot 2^{-2})\]
\[\leq q_{j_0}(x_1) \odot   q_{j_1}(x_2)\odot   q_{j_2}(x_3)\odot   \lambda(x_3) \odot  \varepsilon \cdot (2^{-1}\odot  2^{-2}\odot  2^{-3})\leq...,\]
and so
\begin{equation}\label{eq: lambda 0 first}
	\lambda(x_0) \leq q_{j_0}(x_1) \odot   q_{j_1}(x_2)\odot  ...\odot   q_{j_{n-1}}(x_{n})\odot   \lambda(x_n) \odot  \varepsilon,\,\,\forall\,n\geq 1.
	\end{equation}

As $\lambda \leq 0$ and $\lambda(x_0)>-\infty$ we conclude also that
\begin{equation}\label{eq: lambda 0}
	-\infty < \lambda(x_0) \leq  \left(\sum_{n\geq 1} q_{j_{n-1}}(x_n) \right) \odot   \varepsilon  .
	\end{equation}
Given numbers $n,m \in \mathbb{Z}$ such that $0\leq n<m$ we define $\omega_{n,m} := (j_n,...,j_{m-1})$ and observe that $\phi_{\omega_{n,m}}(x_{m}) = x_n$. Therefore equation \eqref{eq: lambda 0 first} can be rewritten as
  \begin{equation}\label{eq: lambda 0 second}
  	\lambda(x_0) \leq {\rm Sum}(\omega_{0,n},x_{n})\odot   \lambda(x_n) \odot  \varepsilon,\,\,\forall\,n\geq 1.
  \end{equation}

\textit{Claim:} Any point $z$ which is an accumulation point of the sequence $(x_n)_{n\geq 0}$ belongs to the Aubry set $\Omega$.

Indeed,
let $(x_{n_i})$ be a subsequence of $(x_n)_{n\geq 0}$ which converges to $z$.  Given $\eta >0$ there exists $N_0$ such that $|\sum_{n\geq N_0} q_{j_{n-1}}(x_n)|<\eta$ (because from inequality \eqref{eq: lambda 0} the series $\sum_{n\geq 1} q_{j_{n-1}}(x_n)$ converges). Then for $n_i>N_0$, as $q\leq 0$, we have $\sum_{n_i< n \leq n_{i+1}} q_{j_{n-1}}(x_n)>-\eta.$
As $x_n = \phi_{\omega_{n,n_{i+1}}}(x_{n_{i+1}})=\phi_{j_n}\circ...\circ\phi_{j_{(n_{i+1})-1}}(x_{n_{i+1}})$
 we obtain
\[{\rm Sum}(\omega_{n_i,n_{i+1}},x_{n_{i+1}}) =\sum_{n_i< n \leq n_{i+1}} q_{j_{n-1}}(\phi_{j_n}\circ...\circ\phi_{j_{(n_{i+1})-1}}(x_{n_{i+1}})) >-\eta.\]
From Lemma \ref{lemma: Sum lipschitz},
\[{\rm Sum}(\omega_{n_i,n_{i+1}},z) >-\eta-\frac{C}{1-\gamma}\cdot d(x_{n_{i+1}},z).\]
Furthermore,
\[d(z, \omega_{n_i,n_{i+1}}(z))\leq d(z, x_{n_i})+d(x_{n_i},\omega_{n_i,n_{i+1}}(z))=d(z, x_{n_i})+d(\omega_{n_i,n_{i+1}}(x_{n_{i+1}}),\omega_{n_i,n_{i+1}}(z))\]
\[\leq d(z, x_{n_i})+\gamma^{(n_{i+1}-n_i)}d(x_{n_{i+1}},z) \stackrel{n_i\to +\infty}{\to} 0.\]
Making $n_i \to \infty$, by definition of $S(\cdot,\cdot)$ we get
$$S(z,z) \geq \limsup_{n_i\to+\infty}{\rm Sum}(\omega_{n_i,n_{i+1}},z)= -\eta.$$
 As $\eta$ is any positive number we conclude that $S(z,z) = 0$.
This proves the claim.

\bigskip

Now we complete the proof. Let $z$ be an accumulation point of $x_n$ and suppose that $x_{n_i}$ is a subsequence which converges to $z$. From above claim $z\in \Omega$ and we want to show that
\[\lambda(x_0) \leq  S(x_0,z) \odot   \lambda(z)\odot  \varepsilon.\]
With this objective we return to inequality \eqref{eq: lambda 0 second} and apply Lemma \ref{lemma: Sum lipschitz} in order to obtain
\[\lambda(x_0) \leq {\rm Sum}(\omega_{0,n_i},x_{n_i})\odot   \lambda(x_{n_i}) \odot  \varepsilon  \leq {\rm Sum}(\omega_{0,n_i},z) \odot   \frac{C}{1-\gamma} d(x_{n_i},z)\odot   \lambda(x_{n_i}) \odot  \varepsilon\]
Making $n_i \to \infty$ (remember that $\lambda$ is u.s.c.) we have
\[\lambda(x_0) \leq \liminf_{n_i\to\infty} {\rm Sum}(\omega_{0,n_i},z) \odot   \lambda(z) \odot  \varepsilon.\]
Therefore, as we also have $\phi_{\omega_{0,n_i}}(z) \to x_0$, we obtain
\[\lambda(x_0) \leq S(x_0,z) \odot   \lambda(z) \odot   \varepsilon.\]
\end{proof}

\section{Characterization of the invariant probability for non place dependent mpIFS}\label{sec: non dependence}

 In this section we propose to prove that for a compact space $J$ and a mpIFS $S=(X,\phi,q)$, if  $q_j(x)=q_j$ does not depend on $x$, then there exists a unique invariant idempotent  probability for $M_{\phi,q}$. Furthermore we present explicitly its density (see Theorem \ref{thm:represent idemp invar cte}). This result somehow generalize the ones for a finite number of maps as founded in \cite{MZ}, \cite{dCOS20fuzzy} and \cite{ExistInvIdempotent}.

We start by proving some auxiliary lemmas. Initially we remark that for any $\omega =(j_1,...,j_n) \in J^{n}$ and $x\in X$ we have
\[{\rm Sum}(\omega,x) = q_{j_1}+q_{j_2}+...+q_{j_n}.\]
 In this section we consider also the space ${J}^{\mathbb{N}}$, which is compact with the metric
\[\tilde{d}((i_1,i_2,i_3,...),(j_1,j_2,j_3,...)) = \sum_{n\geq 1} \gamma^n d_J(i_n,j_n),\]
where $\gamma<1$ satisfies \eqref{eq: gamma contraction}.	
   	
The next lemma and definition are well known in the literature with more or less generality (finite symbols in $J$), see \cite[Theorem 2.3]{Lewe93}, \cite[Proposition 1.3]{FanRuelle}, \cite[Theorem 2.1]{MihShift} and  \cite[Theorem 1.5]{StrSwa}.

\begin{lemma}\label{lemma:x_omega} Given any $\omega=(j_1,j_2,...) \in J^{\mathbb{N}}$, there exists a unique point $x_{\omega} \in X$ satisfying
\[x_{\omega} = \lim_{n\to+\infty} \phi_{j_1}\circ...\circ\phi_{j_n}(x_n) \]
for any sequence $(x_n)$ of points of $X$.
\end{lemma}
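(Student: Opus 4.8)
The plan is to exhibit $x_\omega$ as the limit of an explicitly Cauchy sequence and then to verify that the limit does not depend on the auxiliary points $(x_n)$. First I would fix a reference point, say $p\in X$, and define $z_n:=\phi_{j_1}\circ\dots\circ\phi_{j_n}(p)$. Using the contraction hypothesis \eqref{eq: gamma contraction} in the form $d_X(\phi_j(a),\phi_j(b))\le \gamma\, d_X(a,b)$, I would estimate
\[
d_X(z_{n+k},z_n)=d_X\big(\phi_{j_1}\circ\dots\circ\phi_{j_n}(\phi_{j_{n+1}}\circ\dots\circ\phi_{j_{n+k}}(p)),\ \phi_{j_1}\circ\dots\circ\phi_{j_n}(p)\big)\le \gamma^n\,\mathrm{diam}(X),
\]
since $\mathrm{diam}(X)<\infty$ by compactness. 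Hence $(z_n)$ is Cauchy and, $X$ being compact (thus complete), it converges to some point $x_\omega$.

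Next I would show that for an \emph{arbitrary} sequence $(x_n)$ in $X$ the sequence $w_n:=\phi_{j_1}\circ\dots\circ\phi_{j_n}(x_n)$ converges to the same $x_\omega$. The key estimate is again a single application of the $\gamma^n$-contraction of the composition $\phi_{j_1}\circ\dots\circ\phi_{j_n}$:
\[
d_X(w_n,z_n)=d_X\big(\phi_{j_1}\circ\dots\circ\phi_{j_n}(x_n),\ \phi_{j_1}\circ\dots\circ\phi_{j_n}(p)\big)\le \gamma^n\, d_X(x_n,p)\le \gamma^n\,\mathrm{diam}(X)\xrightarrow[n\to\infty]{}0,
\]
so $w_n$ and $z_n$ have the same limit, namely $x_\omega$. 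Uniqueness of $x_\omega$ is then immediate: any point satisfying the displayed limit condition for \emph{some} sequence (e.g.\ the constant sequence $x_n\equiv p$) must equal $\lim_n z_n$.

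The argument is essentially routine; there is no real obstacle, only bookkeeping. The one point worth stating carefully is that the bound $d_X(\phi_{j_1}\circ\dots\circ\phi_{j_n}(a),\phi_{j_1}\circ\dots\circ\phi_{j_n}(b))\le\gamma^n d_X(a,b)$ follows by induction from \eqref{eq: gamma contraction} (taking $j_1=j_2$ there yields $d_X(\phi_j(x_1),\phi_j(x_2))\le\gamma\,d_X(x_1,x_2)$ for each fixed $j$), and that $\mathrm{diam}(X)$ is finite because $X$ is compact. I would also remark — though it is not needed for the statement as phrased — that $\omega\mapsto x_\omega$ is continuous and in fact Lipschitz from $(J^{\mathbb N},\tilde d)$ to $X$, which is why this construction will later give a well-behaved coding map; but the lemma as stated only requires the existence, uniqueness, and independence-of-$(x_n)$ claims established above.
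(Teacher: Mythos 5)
Your proof is correct and takes essentially the same approach as the paper: both arguments rest on the estimate $d_X(\phi_{j_1}\circ\dots\circ\phi_{j_n}(a),\phi_{j_1}\circ\dots\circ\phi_{j_n}(b))\le\gamma^n d_X(a,b)$ together with the finiteness of $\mathrm{diam}(X)$. The only cosmetic difference is that the paper realizes $x_\omega$ as the unique point in the intersection of the nested compacta $\phi_{j_1}\circ\dots\circ\phi_{j_n}(X)$, whereas you obtain it as the limit of the Cauchy sequence $\phi_{j_1}\circ\dots\circ\phi_{j_n}(p)$; the concluding step comparing $\phi_{j_1}\circ\dots\circ\phi_{j_n}(x_n)$ with $x_\omega$ is the same in both.
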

\begin{proof}
If $A\subseteq X$ is a compact set, as $\phi_j$ is continuous, we get that $\phi_j(A)$ is compact. Then we have a sequence of compact sets
\[X \supseteq \phi_{j_1}(X) \supseteq \phi_{j_1}\circ\phi_{j_2}(X) \supseteq...\supseteq \phi_{j_1}\circ...\circ\phi_{j_n}(X)\supseteq...\]
which therefore has a  nonempty compact intersection. Furthermore, as ${\rm diam}(\phi_j(A))\leq \gamma\cdot {\rm diam}(A)$, there exists a unique point $x_{\omega}$ such that
\[\{x_{\omega}\} = \bigcap_{n\geq 1} \phi_{j_1}\circ...\circ\phi_{j_n}(X).\]
As $x_{\omega}$ and $\phi_{j_1}\circ...\circ\phi_{j_n}(x_n)$ are points in $\phi_{j_1}\circ...\circ\phi_{j_n}(X)$ and ${\rm diam}(\phi_{j_1}\circ...\circ\phi_{j_n}(X))\leq \gamma^{n}\cdot {\rm diam}(X)$ we conclude the proof.
\end{proof}

\begin{definition}\label{def:pi_omega} We denote by $\pi:J^{\mathbb{N}}\to X$ the map given by $\pi(\omega) = x_{\omega}$, where $x_{\omega}$ is defined in Lemma \ref{lemma:x_omega}.
\end{definition}

\begin{lemma}
	The map $\pi:J^{\mathbb{N}}\to X$ in Definition \ref{def:pi_omega}  is continuous. More precisely
	\[d_X(\pi(\omega),\pi(\eta)) \leq \tilde{d}(\omega,\eta),\,\,\,\forall \,\omega,\eta \in J^{\mathbb{N}}.\]
\end{lemma}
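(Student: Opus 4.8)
The statement to prove is that $\pi:J^{\mathbb N}\to X$ is continuous, with the quantitative bound $d_X(\pi(\omega),\pi(\eta))\le\tilde d(\omega,\eta)$. The plan is to compare the two images $\pi(\omega)=x_\omega$ and $\pi(\eta)=x_\eta$ by writing each as a limit of the form $\phi_{j_1}\circ\cdots\circ\phi_{j_n}(x_n)$ (Lemma~\ref{lemma:x_omega}), and then estimating term by term how far apart the two composed maps push a common starting point, using the uniform contraction inequality \eqref{eq: gamma contraction}.

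**Key steps.** First I would fix $\omega=(i_1,i_2,\dots)$ and $\eta=(j_1,j_2,\dots)$ and a basepoint $x\in X$, and look at the two sequences $a_n:=\phi_{i_1}\circ\cdots\circ\phi_{i_n}(x)$ and $b_n:=\phi_{j_1}\circ\cdots\circ\phi_{j_n}(x)$, which by Lemma~\ref{lemma:x_omega} converge to $x_\omega$ and $x_\eta$ respectively. Second, I would bound $d_X(a_n,b_n)$ by a telescoping argument: introduce the hybrid points $h_k:=\phi_{i_1}\circ\cdots\circ\phi_{i_k}\circ\phi_{j_{k+1}}\circ\cdots\circ\phi_{j_n}(x)$ interpolating between $b_n=h_0$ and $a_n=h_n$, so that $d_X(a_n,b_n)\le\sum_{k=1}^n d_X(h_k,h_{k-1})$. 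Third, for each $k$, the maps $h_k$ and $h_{k-1}$ agree on the outer $k-1$ symbols $i_1,\dots,i_{k-1}$ and then apply $\phi_{i_k}$ versus $\phi_{j_k}$ to the same point $\phi_{j_{k+1}}\circ\cdots\circ\phi_{j_n}(x)$; applying \eqref{eq: gamma contraction} once for the differing index and then $k-1$ more times for the common outer maps gives
\[
d_X(h_k,h_{k-1})\le\gamma^{k-1}\cdot\gamma\cdot d_J(i_k,j_k)=\gamma^{k}d_J(i_k,j_k).
\]
Summing over $k$ yields $d_X(a_n,b_n)\le\sum_{k=1}^n\gamma^k d_J(i_k,j_k)\le\tilde d(\omega,\eta)$ for every $n$. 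Finally, letting $n\to\infty$ and using $a_n\to x_\omega$, $b_n\to x_\eta$ together with continuity of $d_X$ gives $d_X(\pi(\omega),\pi(\eta))\le\tilde d(\omega,\eta)$; continuity of $\pi$ is then immediate since $\tilde d(\omega,\eta)\to0$ as $\eta\to\omega$.

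**Main obstacle.** None of the steps is deep; the only point requiring care is bookkeeping in the telescoping estimate — making sure that in $d_X(h_k,h_{k-1})$ exactly one inner application of $\phi$ uses different indices (picking up the factor $d_J(i_k,j_k)$ from \eqref{eq: gamma contraction}, since that inequality allows $j_1\neq j_2$) while the $k-1$ outer applications are index-matched and each contribute a factor $\gamma$ to the $d_X$-distance. One should also note that when $k=1$ there are no outer maps, so the bound is simply $\gamma\, d_J(i_1,j_1)$, consistent with the formula $\gamma^k d_J(i_k,j_k)$ at $k=1$. Everything else is a routine passage to the limit.
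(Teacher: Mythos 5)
Your proof is correct and reaches the same bound $\sum_{k=1}^{n}\gamma^{k}d_J(i_k,j_k)\leq\tilde d(\omega,\eta)$ as the paper, followed by the same passage to the limit; the only difference is organizational. The paper peels the compositions from the outside, applying \eqref{eq: gamma contraction} once per level to the pair $\bigl(\phi_{i_k}(\cdot),\phi_{j_k}(\cdot)\bigr)$ with both the indices and the inner points differing, so each step directly produces $\gamma^{k}d_J(i_k,j_k)$ plus a contracted remainder. You instead telescope through hybrid words, changing one symbol at a time, which splits each use of \eqref{eq: gamma contraction} into two special cases (equal points with differing indices for the $k$-th map, then equal indices for the $k-1$ outer maps); this is slightly longer in bookkeeping but equally valid, and your remark about the $k=1$ edge case is handled correctly.
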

\begin{proof}
Given $\omega=(i_1,i_2,i_3,...)$ and $\eta = (j_1,j_2,j_3,...)$ and any point $x\in X$ we have $\pi(\omega) = \lim_n \phi_{i_1}\circ \dots \circ \phi_{i_n}(x)$ and $\pi(\eta) = \lim_n \phi_{j_1}\circ \dots \circ \phi_{j_n}(x)$. Then we have, 	
\[d_X(\pi(\omega),\pi(\eta)) =\lim_{n\to+\infty} d_X( \phi_{i_1}\circ \dots \circ \phi_{i_n}(x),\phi_{j_1}\circ \dots \circ \phi_{j_n}(x)).\]
As $\phi$ satisfies $\eqref{eq: gamma contraction}$ we have
\[d_X(\pi(\omega),\pi(\eta)) \leq \lim_{n\to+\infty} \gamma [d_J(i_1,j_1)+d_X( \phi_{i_2}\circ \dots \circ \phi_{i_n}(x),\phi_{j_2}\circ \dots \circ \phi_{j_n}(x))]\]
\[\leq \lim_{n\to+\infty} [\gamma d_J(i_1,j_1)+\gamma^2 d_J(i_2,j_2) +\gamma^2 d_X( \phi_{i_3}\circ \dots \circ \phi_{i_n}(x),\phi_{j_3}\circ \dots \circ \phi_{j_n}(x))]\]
\[\leq.. .\leq \lim_{n\to+\infty} [\gamma d_J(i_1,j_1)+\gamma^2 d_J(i_2,j_2)+...+\gamma^{n}d_J(i_n,j_n)] = \tilde{d}(\omega,\eta).\]
\end{proof}

\begin{definition} We denote by $J_0:=\{j\in J\,|\,q_j=0\}$ which from \eqref{eq: q normalized} is a non-empty set.
\end{definition}

\begin{lemma}\label{lemma:Omega and J_0}
Given a mpIFS, $S=(X,\phi,q)$, if  $q_j(x)=q_j$ does not depend of $x$, then the Aubry set satisfies $\Omega=\pi(J_0^{\mathbb{N}})$.
\end{lemma}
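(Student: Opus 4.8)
The plan is to prove the two inclusions separately, using the identity ${\rm Sum}(\omega,x)=q_{j_1}+\dots+q_{j_n}$ valid in the non place dependent case, together with Lemma~\ref{lemma:x_omega} and the continuity of $\pi$ established above.

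For the inclusion $\pi(J_0^{\mathbb N})\subseteq\Omega$, given $\omega=(j_1,j_2,\dots)\in J_0^{\mathbb N}$ I would set $x:=\pi(\omega)$ and apply Lemma~\ref{lemma:x_omega} with the constant sequence $x_n\equiv x$, which yields $\phi_{(j_1,\dots,j_n)}(x)\to\pi(\omega)=x$. Hence for every $\varepsilon>0$ some truncation $\omega|_n=(j_1,\dots,j_n)$ satisfies $d(x,\phi_{\omega|_n}(x))<\varepsilon$; since all symbols lie in $J_0$ we have ${\rm Sum}(\omega|_n,x)=0$, so $S_\varepsilon(x,x)\ge 0$. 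As $S\le 0$ always (by \eqref{eq: q normalized}), this forces $S(x,x)=0$, i.e.\ $x\in\Omega$.

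For the reverse inclusion $\Omega\subseteq\pi(J_0^{\mathbb N})$, fix $x\in\Omega$. First I would observe that $\varepsilon\mapsto S_\varepsilon(x,x)$ is non-decreasing, has infimum $\lim_{\varepsilon\to 0}S_\varepsilon(x,x)=S(x,x)=0$, and is everywhere $\le 0$; hence it is identically $0$, so for each $k$ there is a word $\omega_k=(j_1^k,\dots,j_{n_k}^k)\in J^{n_k}$ with $d(x,\phi_{\omega_k}(x))<1/k$ and ${\rm Sum}(\omega_k,x)=q_{j_1^k}+\dots+q_{j_{n_k}^k}>-1/k$. The main obstacle is that the symbols of $\omega_k$ need not lie in $J_0$; I would overcome it by noting that, since all summands are $\le 0$ and add up to more than $-1/k$, \emph{every} symbol satisfies $q_{j_i^k}>-1/k$, i.e.\ $j_i^k\in U_k:=\{j\in J:q_j>-1/k\}$, an open neighbourhood of $J_0$. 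A routine compactness argument (using $J$ compact, $q$ continuous, and $\bigcap_k U_k=J_0$) shows that $\delta_k:=\sup_{j\in U_k}d_J(j,J_0)\to 0$. Replacing each $j_i^k$ by a nearest point $\hat\jmath_i^k\in J_0$ (which exists since $J_0$ is compact) and using the same kind of estimate as in Lemma~\ref{lemma: Sum lipschitz} derived from \eqref{eq: gamma contraction}, the word $\hat\omega_k:=(\hat\jmath_1^k,\dots,\hat\jmath_{n_k}^k)\in J_0^{n_k}$ satisfies $d(\phi_{\hat\omega_k}(x),\phi_{\omega_k}(x))\le\frac{\gamma}{1-\gamma}\delta_k$, hence $d(\phi_{\hat\omega_k}(x),x)\to 0$.

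To conclude, I would let $\hat x_k$ be the unique fixed point of the $\gamma^{n_k}$-contraction $\phi_{\hat\omega_k}=\phi_{\hat\jmath_1^k}\circ\dots\circ\phi_{\hat\jmath_{n_k}^k}$; then $\hat x_k=\pi(\hat\omega_k\hat\omega_k\hat\omega_k\cdots)\in\pi(J_0^{\mathbb N})$, since the periodic word $\hat\omega_k\hat\omega_k\cdots$ lies in $J_0^{\mathbb N}$. From $d(x,\hat x_k)\le d(x,\phi_{\hat\omega_k}(x))+\gamma^{n_k}d(x,\hat x_k)$ we get $d(x,\hat x_k)\to 0$. Finally, $J_0^{\mathbb N}$ is closed in the compact space $J^{\mathbb N}$ and $\pi$ is continuous, so $\pi(J_0^{\mathbb N})$ is compact, hence closed in $X$; therefore $x=\lim_k\hat x_k\in\pi(J_0^{\mathbb N})$, which completes the proof.
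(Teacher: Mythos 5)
Your proof is correct, and while the easy inclusion $\pi(J_0^{\mathbb N})\subseteq\Omega$ is essentially identical to the paper's, your argument for $\Omega\subseteq\pi(J_0^{\mathbb N})$ takes a genuinely different route. The paper works directly with the approximating words $\omega_k$: it splits into two cases according to whether the lengths $k(n)$ stay bounded along a subsequence or tend to infinity, pads the words with a dummy symbol to view them in $J^{\mathbb N}$, extracts a convergent subsequence $\omega^n\to\omega$, checks symbol by symbol that $\omega\in J_0^{\mathbb N}$, and then verifies by hand that $z=\pi(\omega)$. You instead observe that each individual symbol satisfies $q_{j_i^k}>-1/k$ (since the summands are nonpositive), project the whole word onto $J_0$ using the uniform shrinking $\delta_k\to 0$ of the neighbourhoods $U_k$, and conclude via the periodic points $\hat x_k=\pi(\hat\omega_k\hat\omega_k\cdots)$ together with the closedness of $\pi(J_0^{\mathbb N})$. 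This avoids the case distinction and the padding device entirely, at the price of two extra (but routine) ingredients: the compactness argument showing $\sup_{j\in U_k}d_J(j,J_0)\to 0$, and the observation that $\pi(J_0^{\mathbb N})$ is compact, hence closed. Two small remarks: the perturbation estimate $d(\phi_{\hat\omega_k}(x),\phi_{\omega_k}(x))\le\frac{\gamma}{1-\gamma}\delta_k$ is really the estimate underlying the continuity of $\pi$ (a telescoping application of \eqref{eq: gamma contraction} in the $j$-variable) rather than Lemma~\ref{lemma: Sum lipschitz}, which perturbs the base point instead of the symbols --- but the derivation you indicate is the right one; and in the final step one should divide by $1-\gamma^{n_k}\ge 1-\gamma>0$ to pass from $d(x,\hat x_k)\le d(x,\phi_{\hat\omega_k}(x))+\gamma^{n_k}d(x,\hat x_k)$ to $d(x,\hat x_k)\to 0$, which is exactly what your inequality yields.
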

\begin{proof} If $z\in\pi(J_0^{\mathbb{N}})$ and we write $z=\pi(i_1,i_2,i_3,...)$,  then $z=\lim_{n\to\infty}\phi_{i_1}\circ \ldots \circ\phi_{i_n}(z)$ and then $S(z,z) \geq \lim_{n \to \infty} q_{i_1}+...+q_{i_n} = 0$, which proves that $z\in \Omega$.

On the other hand, if $z\in \Omega$, then there exists a sequence of points having the form $z_n =\phi_{j_1^n}\circ \ldots \circ\phi_{j_{k(n)}^n}(z)$ such that $z_n \to z$ and such that
\[0\geq q_{j_1^n}+...+q_{j_{k(n)}^n} >-\frac{1}{n}.\]
From now on we consider two cases to complete the proof.

\textit{Case 1:} there exists a number $M$ such that $k(n)=M$ for infinite indexes $n$. Then taking a subsequence given from such indexes we can suppose that $z_n =\phi_{j_1^n}\circ \ldots \circ\phi_{j_{M}^n}(z)$ for all $n$. By taking a subsequence again we can suppose there exists $(j_1,...,j_M)$ such that $(j_1^n,...,j_M^n)\to (j_1,...,j_M)$ in the space $J^{M}$. As by \eqref{eq: gamma contraction} the map $\phi$ is continuous also in the $j$ variable, we get $z =\phi_{j_1}\circ \ldots \circ\phi_{j_{M}}(z)$ and as $q$ is continuous in $J$ we also get
\[q_{j_1}+...+q_{j_M} = \lim_{n\to\infty}q_{j_1^n}+...+q_{j_M^n} = 0.\]
This shows that $j_1,...,j_M \in J_0$ and as $z =\phi_{j_1} \circ \ldots \circ \phi_{j_{M}}(z)=\phi_{j_1} \circ \ldots \circ \phi_{j_{M}}\phi_{j_1} \circ \ldots \circ \phi_{j_{M}}(z)=...$ we get $z = \pi(j_1,...,j_M,j_1,...,j_M,j_1,...,j_M,...)$.

\textit{Case 2:} for any number $M$ we have $k(n)>M$ for sufficiently large $n$. Then, taking a subsequence, we can suppose that $k(n)$ is increasing and $k(n)>n$. Fixed any $\tilde{j}\in J$ we denote by $\omega^n=(j_1^n,j_2^n,...,j_{k(n)}^n,\tilde{j},\tilde{j},\tilde{j},...)$ in $J^{\mathbb{N}}$. Such sequence has a convergent subsequence $\omega^{n_i}$ which we can assume to be the initial sequence $\omega^n$. Then, there exists a point $\omega = (j_1,j_2,...)\in J^{\mathbb{N}}$ such that $\omega^n \to \omega$. We have $0\geq q_{j_1} = \lim_{n \to \infty} q_{j_1}^n \geq\lim_{n \to \infty} [q_{j_1^n}+...+q_{j_{k(n)}^n}] \geq -\lim_{n \to \infty} \frac{1}{n} =0$, and so $j_1 \in J_0$. The same argument can be applied to $j_2,j_3,...$ and so we get  $\omega \in J_0^{\mathbb{N}}$.

Now we will prove that $z= \pi(\omega)$. With this purpose we will prove that for any $m$ there exists a point $y_m \in X$ such that $z = \lim_{m}\phi_{j_1} \circ ...\circ \phi_{j_m}(y_m)$.  Let us call $y_m := \phi_{j_{m+1}^m} \circ ...\circ \phi_{j_{k(m)}^m}(z)$. Then we get
\[d_X(z, \phi_{j_1} \circ ...\circ \phi_{j_m}(y_m)) \leq d_X(z, \phi_{j_1^m} \circ ...\circ \phi_{j_m^m}(y_m))+d_X(\phi_{j_1^m} \circ ...\circ \phi_{j_m^m}(y_m),\phi_{j_1} \circ ...\circ \phi_{j_m}(y_m))\]
\[= d_X(z, z_m) +  d_X(\phi_{j_1^m} \circ ...\circ \phi_{j_m^m}(y_m),\phi_{j_1} \circ ...\circ \phi_{j_m}(y_m))\]
\[\stackrel{\eqref{eq: gamma contraction}}{\leq} d_X(z, z_m) +\gamma [d_J(j_1^m,j_1)+d_X(\phi_{j_2^m} \circ ...\circ \phi_{j_m^m}(y_m),\phi_{j_2} \circ ...\circ \phi_{j_m}(y_m))]\]
\[\leq d_X(z, z_m) +\gamma [d_J(j_1^m,j_1)]+\gamma^2[d_J(j_2^m,j_2)+d_X(\phi_{j_3^m} \circ ...\circ \phi_{j_m^m}(y_m),\phi_{j_3} \circ ...\circ \phi_{j_m}(y_m))]\]
\[\leq ...\leq d_X(z, z_m) +  \sum_{k=1}^{m}\gamma^{k} d_J(j_k^m,j_k) \leq d_X(z, z_m)+ \tilde{d}(\omega^m,\omega). \]
Therefore, as $z_m\to z$ and $\omega_n\to\omega$, we get $z= \lim_{m \to \infty}\phi_{j_1} \circ ...\circ \phi_{j_m}(y_m)$ and so $z = \pi(\omega)$.

\end{proof}

\begin{lemma}\label{lemma:S zero in Omega}	Given a mpIFS $S=(X,\phi,q)$ such that $q_j(x)=q_j$ does not depend of $x$, we have $S(z,y) = 0$ for any  $z\in\Omega$ and $y\in X$.
\end{lemma}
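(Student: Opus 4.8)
The statement claims that for a mpIFS with constant weights $q_j(x)=q_j$, we have $S(z,y)=0$ for every $z\in\Omega$ and every $y\in X$. The key structural fact at our disposal is Lemma~\ref{lemma:Omega and J_0}, which identifies $\Omega=\pi(J_0^{\mathbb N})$, together with the observation that in the constant-weight case ${\rm Sum}(\omega,x)=q_{j_1}+\dots+q_{j_n}$ is independent of the base point $x$. The plan is: fix $z\in\Omega$ and $y\in X$, write $z=\pi(\omega)$ with $\omega=(i_1,i_2,\dots)\in J_0^{\mathbb N}$, and exhibit, for each $\varepsilon>0$, a finite word $\eta\in J^n$ with $d(z,\phi_\eta(y))<\varepsilon$ and ${\rm Sum}(\eta,y)>-\varepsilon$ (in fact $=0$). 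This would give $S_\varepsilon(z,y)\ge 0$ for all $\varepsilon>0$, hence $S(z,y)\ge 0$; since $S\le 0$ always holds (because $q\le 0$), we conclude $S(z,y)=0$.

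First I would recall from Lemma~\ref{lemma:x_omega} that $\pi(\omega)=\lim_{n\to\infty}\phi_{i_1}\circ\cdots\circ\phi_{i_n}(x_n)$ for \emph{any} sequence $(x_n)$ in $X$; in particular, taking the constant sequence $x_n=y$, we get $z=\pi(\omega)=\lim_{n\to\infty}\phi_{i_1}\circ\cdots\circ\phi_{i_n}(y)$. So given $\varepsilon>0$, choose $n$ large enough that $d(z,\phi_{i_1}\circ\cdots\circ\phi_{i_n}(y))<\varepsilon$. Set $\eta:=(i_1,\dots,i_n)\in J^n$; then $\phi_\eta(y)=\phi_{i_1}\circ\cdots\circ\phi_{i_n}(y)$ satisfies $d(z,\phi_\eta(y))<\varepsilon$. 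Moreover, since each $i_k\in J_0$, i.e.\ $q_{i_k}=0$, we have ${\rm Sum}(\eta,y)=q_{i_1}+\dots+q_{i_n}=0$. Plugging into the definition \eqref{def: S_epsilon} of $S_\varepsilon$, the word $\eta$ contributes the value $0$ to the supremum, so $S_\varepsilon(z,y)\ge 0$.

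Letting $\varepsilon\to 0$ gives $S(z,y)=\lim_{\varepsilon\to 0}S_\varepsilon(z,y)\ge 0$. On the other hand, every term ${\rm Sum}(\omega,y)$ is a finite sum of weights $q_{j_k}\le 0$ (by \eqref{eq: q normalized}, which forces $q_j\le 0$ for all $j$ in the constant case), so $S_\varepsilon(z,y)\le 0$ for all $\varepsilon$, hence $S(z,y)\le 0$. Combining, $S(z,y)=0$. I do not anticipate a serious obstacle here: the only point requiring a little care is invoking Lemma~\ref{lemma:x_omega} with the constant sequence to guarantee the approximating words can be taken to start at the prescribed base point $y$, and noting that constancy of the weights makes ${\rm Sum}$ base-point independent so that the approximation in $X$ and the value of ${\rm Sum}$ decouple cleanly.
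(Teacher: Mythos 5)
Your proposal is correct and follows essentially the same route as the paper: write $z=\pi(\omega)$ with $\omega\in J_0^{\mathbb N}$ via Lemma~\ref{lemma:Omega and J_0}, use Lemma~\ref{lemma:x_omega} with base point $y$ to get approximating words $\phi_{i_1}\circ\cdots\circ\phi_{i_n}(y)\to z$ whose ${\rm Sum}$ vanishes, and conclude $0\ge S(z,y)\ge 0$. The paper's version is just a more condensed statement of the same argument.
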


\begin{proof} By applying Lemma \ref{lemma:Omega and J_0}, we conclude that there exists $\omega = (j_1,j_2,j_3,...) \in J_0^{\mathbb{N}}$ such that $z =\pi(\omega)$. Then, we have
$z = \lim_{n \to \infty} \phi_{j_1}\circ...\circ\phi_{j_n}(y)$ and
\[0\geq S(z,y) \geq \lim_{n \to \infty} q_{j_1}+...+q_{j_n} = \lim_{n \to \infty} 0 = 0.\]
\end{proof}

Now we present the main result of this section.

\begin{theorem}\label{thm:represent idemp invar cte}  Given a mpIFS, $S=(X,\phi,q)$, if  $q_j(x)=q_j$ does not depend on $x$, then there exists a unique invariant idempotent  probability for $M_{\phi,q}$. Let $\lambda \in U(X,\mathbb{R}_{\max})$ be its density. Then for any $z \in \Omega$ we have
	\[\lambda(x) = S(x,z) = \bigoplus_{(j_1,j_2,j_3,...)\in\pi^{-1}(x)}[q_{j_1}+q_{j_2}+q_{j_3}+...].\]
\end{theorem}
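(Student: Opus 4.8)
The plan is to combine the general characterization from Theorem~\ref{teo: main result} with the structural lemmas already established in this section. Fix any $z_0 \in \Omega$; by Lemma~\ref{lemma:S zero in Omega} we have $S(z,z_0) = 0$ for every $z \in \Omega$ (taking $y = z_0$), and also $S(x,z) = \bigoplus_{(j_1,j_2,\dots)\in\pi^{-1}(x)}[q_{j_1}+q_{j_2}+\dots]$ should be read off directly from the definition of the Ma\~n\'e potential in the non place dependent case: since ${\rm Sum}(\omega,x) = q_{j_1}+\dots+q_{j_n}$ and $\phi_\omega(y)\to \pi(\omega')$ along any sequence, a short argument identifies $S(x,z)$ with the supremum of $\sum q_{j_k}$ over all $\omega'\in\pi^{-1}(x)$ regardless of $z$. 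So the first step is to prove this closed formula $S(x,z) = \bigoplus_{\omega'\in\pi^{-1}(x)}\sum_k q_{j_k}$ for all $x\in X$ and all $z\in X$, using Lemma~\ref{lemma:x_omega} to control the limit points of $\phi_\omega(z)$.

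Next I would verify uniqueness. Suppose $\mu\in I(X)$ is invariant with density $\lambda\in U(X,\mathbb{R}_{\max})$. By the reciprocal half of Theorem~\ref{teo: main result}, $\lambda(x) = \bigoplus_{z\in\Omega}[S(x,z)\odot\lambda(z)]$. Using $S(z,y)=0$ for $z\in\Omega$, $y\in X$ (Lemma~\ref{lemma:S zero in Omega}), evaluated with $y\in\Omega$, we get that $\lambda$ restricted to $\Omega$ is constant: for $z,z'\in\Omega$, $\lambda(z)\geq S(z,z')\odot\lambda(z') = \lambda(z')$, hence $\lambda(z)=\lambda(z')$. Combined with the normalization $\bigoplus_{x\in X}\lambda(x)=0$ and the fact (from the same formula) that the supremum of $\lambda$ is attained on $\Omega$ — because $\lambda(x)\le \bigoplus_{z\in\Omega}\lambda(z)$ since $S\le 0$ — we conclude $\lambda(z)=0$ for all $z\in\Omega$. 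Therefore $\lambda(x) = \bigoplus_{z\in\Omega}S(x,z) = S(x,z_0)$ for any fixed $z_0\in\Omega$, which pins $\lambda$ down uniquely; this simultaneously gives the displayed formula.

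For existence, I would apply the forward half of Theorem~\ref{teo: main result}: take $\lambda_1\equiv 0$ on $\Omega$ (so $\lambda_1(x_0)=0$ trivially), form $\overline{\lambda}(x) = \bigoplus_{y\in\Omega}[S(x,y)\odot\lambda_1(y)] = \bigoplus_{y\in\Omega}S(x,y) = S(x,z_0)$. One must check $\bigoplus_{x\in X}\overline{\lambda}(x)=0$: the inequality $\le 0$ is clear since $S\le 0$, and $\overline{\lambda}(z_0)\ge S(z_0,z_0)=0$ gives equality. Then Theorem~\ref{teo: main result} yields that $\overline{\mu}(f):=\bigoplus_{x}\overline{\lambda}(x)\odot f(x)$ is invariant. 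A remaining point is that this $\overline{\lambda}$ is actually u.s.c.\ so that it is the genuine density in $U(X,\mathbb{R}_{\max})$; I would argue this from the closed formula $\overline{\lambda}(x)=S(x,z_0)=\bigoplus_{\omega\in\pi^{-1}(x)}\sum_k q_{j_k}$ together with continuity of $\pi$ (so $\pi^{-1}$ of a neighborhood behaves well) and compactness of $J^{\mathbb{N}}$ — alternatively, the uniqueness part already shows any invariant idempotent probability has density equal to this function, so it must be u.s.c.

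The main obstacle I anticipate is the first step: rigorously establishing $S(x,z) = \bigoplus_{\omega\in\pi^{-1}(x)}\sum_k q_{j_k}$ and in particular its \emph{independence of} $z$. The $\ge$ direction needs that for $\omega\in\pi^{-1}(x)$ the truncations $\phi_{(j_1,\dots,j_n)}(z)$ converge to $x$, giving admissible competitors in $S_\varepsilon(x,z)$; the $\le$ direction needs that any sequence $\omega_n\in J^{k(n)}$ with $\phi_{\omega_n}(z)\to x$ and $k(n)\to\infty$ can be extended (padding with a fixed symbol) to points of $J^{\mathbb{N}}$ whose limit lies in $\pi^{-1}(x)$, with $\sum q_{j_k}$ controlled in the limit — exactly the bookkeeping already carried out in Case~2 of Lemma~\ref{lemma:Omega and J_0}, which I would cite or mimic. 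Handling the case $k(n)$ bounded (so $x$ is periodic-type, as in Case~1 of that lemma) must also be folded in. Once this formula is in hand, uniqueness and existence follow cleanly from Theorem~\ref{teo: main result} and Lemma~\ref{lemma:S zero in Omega}.
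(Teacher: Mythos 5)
Your overall route coincides with the paper's: use the reciprocal half of Theorem~\ref{teo: main result} together with Lemma~\ref{lemma:S zero in Omega} to force $\lambda\equiv 0$ on $\Omega$ and hence $\lambda(\cdot)=S(\cdot,z_0)$ for $z_0\in\Omega$, and then identify $S(\cdot,z_0)$ with $\bigoplus_{\omega\in\pi^{-1}(\cdot)}\sum_k q_{j_k}$ by the padding/subsequence bookkeeping of Lemma~\ref{lemma:Omega and J_0}. (Your explicit existence step via the forward half of Theorem~\ref{teo: main result} with $\lambda_1\equiv 0$ is a reasonable addition; the paper leaves existence implicit.)

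There is, however, one step that fails as you state it: the ``closed formula'' $S(x,z)=\bigoplus_{\omega\in\pi^{-1}(x)}\sum_k q_{j_k}$ is \emph{not} valid for all $z\in X$, and $S(x,z)$ is \emph{not} independent of $z$ outside $\Omega$. Concretely, take $X=\{1,2\}^{\mathbb{N}}$, $J=\{1,2\}$, $\phi_j(x)=(j,x_1,x_2,\dots)$ with constant weights $q_1=0$, $q_2=-1$. For $x=z=2^{\infty}$ the word $\omega=(2)$ gives $S(2^{\infty},2^{\infty})=-1$, while $\pi^{-1}(2^{\infty})=\{(2,2,2,\dots)\}$, so the right-hand side equals $-\infty$. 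The hypothesis $z\in\Omega$ enters exactly at the point you defer to ``Case 1'': when $k(n)\equiv M$ is bounded you obtain $x=\phi_{(j_1,\dots,j_M)}(z)$ and $S(x,z)\leq q_{j_1}+\dots+q_{j_M}$, and to produce a competitor in $\pi^{-1}(x)$ with cost at least $q_{j_1}+\dots+q_{j_M}$ you must append to $(j_1,\dots,j_M)$ an infinite word representing $z$ at zero cost; such a word exists if and only if $z\in\pi(J_0^{\mathbb{N}})=\Omega$ (Lemma~\ref{lemma:Omega and J_0}). For the same reason, in the unbounded case the padding should be by a $J_0^{\mathbb{N}}$-representation of $z$ rather than by an arbitrary fixed symbol, so that the cost of the padded infinite word is not destroyed by the tail. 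Once you prove the formula only for $z\in\Omega$ (which is all the theorem requires, and is what the paper does — the paper gets independence of $z$ within $\Omega$ from Proposition~\ref{prop:triangular} and $S(z_1,z_2)=0$ rather than from a global formula), the remainder of your argument goes through.
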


\begin{proof}  Let $\mu\in I(X)$ be invariant for $M_{\phi,q}$ and $\lambda$ be its u.s.c. density given by Theorem \ref{teo : densidade Maslov}. Initially we will prove that $\lambda(x) = S(x,z)$ for any $z\in \Omega$. By applying Theorem \ref{teo: main result} we have
	\[\lambda(x) = \bigoplus_{z\in \Omega} [S(x,z)\odot \lambda(z)], \,\,\,\forall\,x\in X.\]
We recall that $S\leq 0$ and $\bigoplus_{x\in X} \lambda(x)=0$ what means that $\bigoplus_{z\in\Omega} \lambda(z) = 0$.
It follows that, for any point $\tilde{z}\in \Omega$, by applying Lemma \ref{lemma:S zero in Omega},  \[\lambda(\tilde{z}) = \bigoplus_{z\in \Omega} [S(\tilde{z},z) \odot \lambda(z)] =\bigoplus_{z\in \Omega} [0 \odot \lambda(z)] =0 .\]
Therefore $\lambda(z) =0$ for any $z\in \Omega$ and consequently
\[\lambda(x) = \bigoplus_{z\in \Omega} S(x,z).\]
Furthermore, if $z_1$ and $z_2$ are points of $\Omega$ we have $S(x,z_1)=S(x,z_2)$ because
\[S(x,z_1) = S(x,z_1) \odot 0 = S(x,z_1)\odot S(z_1,z_2) \leq S(x,z_2) \]
and
\[ S(x,z_2) = S(x,z_2) \odot 0 = S(x,z_2)\odot S(z_2,z_1) \leq S(x,z_1). \]
This shows that $S(x,z)$ does not depend of the point $z\in \Omega$. Therefore  $$\lambda(x) = \bigoplus_{z\in \Omega} S(x,z)=S(x,\tilde{z}), \,\forall \tilde{z}\in \Omega.$$

From now on we will prove that for any $z\in \Omega$ we have
	\[S(x,z) =\bigoplus_{(j_1,j_2,j_3,...) \in \pi^{-1}(x)}[q_{j_1}+q_{j_2}+q_{j_3}+...].\]

Initially we will prove that $\bigoplus_{(j_1,j_2,j_3,...) \in \pi^{-1}(x)}[q_{j_1}+q_{j_2}+q_{j_3}+...]\leq S(x,z)$. In this way we just need to consider the case $\pi^{-1}(x)\neq \varnothing$. If for some $(j_1,j_2,j_3,...)\in J^{\mathbb{N}}$ we have $\pi(j_1,j_2,j_3,...)=x$  then $x = \lim_{k\to\infty} \phi_{j_1}\circ...\circ\phi_{j_k}(z)$ and by definition of $S(x,z)$ we get
\[S(x,z) \geq \lim_{k\to\infty} q_{j_1}+...+q_{j_k} = q_{j_1}+q_{j_2}+q_{j_3}+...\]
Therefore, by taking a supremum over such sequences $(j_1,j_2,j_3,...)$, we get
$$S(x,z)\geq \bigoplus_{(j_1,j_2,j_3,...) \in \pi^{-1}(x)}[q_{j_1}+q_{j_2}+q_{j_3}+...]$$
as claimed.

Now we will prove that $S(x,z)\leq 	\bigoplus_{(j_1,j_2,j_3,...) \in \pi^{-1}(x)}[q_{j_1}+q_{j_2}+q_{j_3}+...]$. With this purpose, we can suppose $S(x,z)\neq -\infty$. As $z\in \Omega$, there exists $\omega \in J_0^{\mathbb{N}}$, $\omega = (i_1,i_2,i_3,...)$ such that $z = \pi(\omega)$. As $S(x,z) \neq -\infty$ we have $S_{\epsilon}(x,z)\neq -\infty$ for any $\epsilon>0$. By definition of $S_{\epsilon}$, for each $\epsilon=\frac{1}{n}$, there exists a sequence $(j_1^n,...,j_{k(n)}^n)$ such that $d(x, \phi_{j_1^n} \circ\dots\circ \phi_{j_{k(n)}^n}(z))<\frac{1}{n}$ and
\[-\infty < S_{\frac{1}{n}}(x,z) \leq q_{j_1^n}+...+q_{j_{k(n)}^n}+\frac{1}{n}.\]
We consider the sequence $(\omega_n)_{n\geq 1}$ in $J^{\mathbb{N}}$  defined by $\omega_n= (j_1^n,...,j_{k(n)}^n, i_1,i_2,i_3,...)$ which has a convergent subsequence (we can suppose the sequence $(\omega_n)$ converges). Let $\omega_\infty=(l_1,l_2,l_3,...)$ be the limit of $(\omega_n)$. As $d(x, \phi_{j_1^n} \circ\dots\circ \phi_{j_{k(n)}^n}(z))<\frac{1}{n}$ and $\phi_{j_1^n} \circ\dots\circ \phi_{j_{k(n)}^n}(z)=\pi(\omega_n)$ we get $d(x,\pi(\omega_n))<\frac{1}{n}$ and so, as $\pi$ is continuous, $x=\pi(\omega_\infty)$. Furthermore
\[S(x,z) = \lim_{n\to\infty} S_{\frac{1}{n}}(x,z) \leq \lim_{n\to\infty} q_{j_1^n}+...+q_{j_{k(n)}^n} = \lim_{n\to\infty} q_{j_1^n}+...+q_{j_{k(n)}^n}+q_{i_1}+q_{i_2}+q_{i_3}+...\]
As $q$ is continuous we get that, for any fixed $N\geq 1$,
\[S(x,z)\leq \lim_{n\to\infty} q_{j_1^n}+...+q_{j_{k(n)}^n}+q_{i_1}+q_{i_2}+q_{i_3}+....\leq q_{l_1}+...+q_{l_N}\]
By taking a limit in $N$ we get
\[S(x,z) \leq q_{l_1}+q_{l_2}+q_{l_3}+...\]
and so $S(x,z)\leq \bigoplus_{(j_1,j_2,j_3,...) \in \pi^{-1}(x)}[q_{j_1}+q_{j_2}+q_{j_3}+...]$.
\end{proof}

\section{Applications to fuzzy IFS}\label{sec:Applications to fuzzy IFS}

In this section we suppose that $J$ is a finite set. Our main references for fuzzy IFS (IFZS for short) are \cite{Cab92}, where IFZS were introduced as a new tool for the inverse problem of fractal or image construction, and \cite{Oli17} where this theory was extended for generalized IFSs in the sense of \cite{MR2415407}. A fuzzy subset of $X$ is any function  $u: X \to [0,1]$. The family of fuzzy subsets of $X$ is denoted by $\mathcal{F}_{X}$, that is $\mathcal{F}_{X}:=\{ u \; | \; u: X \to [0,1]\}.$
\begin{definition}\label{grey level} \emph{Given $\alpha \in (0,1]$ and $u \in \mathcal{F}_{X}$, }the grey level \emph{or} $\alpha$-cut of $u$\emph{ is the set
$$[u]^{\alpha}:=\{x \in X \; | \; u(x)\geq \alpha \},$$
that is, the set of points where the grey level exceeds the threshold value $\alpha$.
For $\alpha=0$ we define
$$[u]^{0}:={\operatorname{supp}(u):=} \overline{\bigcup\{[u]^{\alpha} \; | \;  \alpha >0\}}=\overline{\{x\in X:u(x)>0\}}.$$}
\end{definition}

A fuzzy set $u \in \mathcal{F}_{X}$ is normal if there is $x \in X$ such that $u(x)=1$ and it is compactly supported  if $[u]^0$ is compact. To built an IFS theory we need to restrict $\mathcal{F}_{X}$ to a smaller family,
$$\mathcal{F}_{X}^* := \{ u \in \mathcal{F}_{X} \; | u \text{ is normal, u.s.c. and compactly supported}\}.$$

We can define a distance $d_\infty$ in $\mathcal{F}_{X}^*$ by
$$d_\infty (u,v) := \bigoplus_{\alpha \in [0,1]} h([u]^{\alpha},[v]^{\alpha}),$$
for $u, v \in \mathcal{F}_{X}^*$, where $h$ is the Hausdorff distance. It is known that $d_\infty$ is a metric (see~\cite{Dia94}), which is complete provided $X$ is compact (see~\cite{Cab92}).

Given a map $\phi: X \to X$, $u \in \mathcal{F}_{X}$, we define a new fuzzy set $\phi(u) \in \mathcal{F}_{X}$ as follows (Zadeh's Extension Principle  \cite{Zad}):
$$\phi(u)(x) :=
\left\{
  \begin{array}{ll}
    \bigoplus_{y\in \phi^{-1}(x)} u(y), & if \; x \in \phi(X); \\
    0, & \mathrm{otherwise.}
  \end{array}
\right.
$$
\begin{definition}
A grey level map is a nonzero function $\rho: [0,1] \to [0,1]$. We said that a grey level map satisfy ndrc condition  or is a ndrc map, if\\
a) $\rho$ is nondecreasing;\\
b) $\rho$ is right continuous.
\end{definition}

Consider $J:=\{1,\ldots,n\}$ and the max operator $a\vee b:=\max\{a,b\}$.
\begin{definition}\label{def: admissible grey level maps}  \emph{A
 system of grey level maps $(\rho_j)_{j \in J}: [0,1]\to [0,1]$ is }admissible\emph{ if it satisfies all the conditions\\
a) $\rho_j$ is nondecreasing;\\
b) $\rho_j$ is right continuous;\\
c) $\rho_j(0)=0$;\\
d) $\rho_j(1)=1$ for some $j$.}
\end{definition}

\begin{definition}\label{def: ifzs definition} \emph{ Let $\mathcal{R}=(X, (\phi_j)_{j \in J})$ be an IFS and $(\rho_j)_{j \in J}$ be an admissible system of grey level maps. Then the system $\mathcal{Z_R}:=(X, (\phi_j)_{j\in J}, (\rho_j)_{j\in J})$ is called} an iterated fuzzy function system (IFZS in short). Inspired by the (HB) operator, we define the Fuzzy Hutchinson-Barnsley (FHB) operator associated to $\mathcal{Z_R}$ by
$$\mathcal{Z_R}(u):= \bigvee_{j \in J} \rho_j(\phi_j(u))$$
for all $u\in\mathcal{F}^*_X$.
\end{definition}
A more general version of the next theorem can be founded in \cite[Theorem 3.15]{Oli17} for Matkowski contractive IFZS.
\begin{theorem}\cite[Theorem 2.4.1]{Cab92}\label{thm: cabrelli fixed point fractal operator}
 Given a contractive IFZS $\mathcal{Z_R}=(X, (\phi_j)_{j \in J})$,  the FHB operator $\mathcal{Z_R}: \mathcal{F}_{X}^* \to \mathcal{F}_{X}^*$ is a Banach contraction in $(\mathcal{F}_{X}^*, d_\infty)$. More precisely,
$$d_\infty(\mathcal{Z_R}(u),\mathcal{Z_R}(v)) \leq \lambda \; d_\infty(u,v), \; \forall u,v \in \mathcal{F}_{X}^*,$$
where $\lambda:=\max\{Lip(\phi_j):j \in J\}$ and $Lip(\phi_j)$, $j \in J$, are contraction constants of $\phi_js$, respectively.
In particular, if $X$ is complete, then there exists a unique $u \in \mathcal{F}_{X}^*$ such that $\mathcal{Z_R}(u)=u$ and, moreover, for any $v \in \mathcal{F}_{X}^*$ we get
$d_\infty(\mathcal{Z_R}^{(k)}(v), \; u) \to 0,$
where $\mathcal{Z_R}^{(k)}(v)$ denotes the $k$-th iteration of the (FHB) operator $\mathcal{Z_R}$.
\end{theorem}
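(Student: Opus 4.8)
The plan is to reduce the whole statement to two elementary properties of the Hausdorff distance $h$ together with a ``calculus of $\alpha$-cuts'' for the three operations that make up $\mathcal{Z_R}$. The two facts about $h$ I will use are: for any family of nonempty sets, $h\big(\bigcup_i A_i,\bigcup_i B_i\big)\le\sup_i h(A_i,B_i)$, and for a Lipschitz map $\psi$ with constant $L$ one has $h(\psi(A),\psi(B))\le L\,h(A,B)$; I also use $h(\overline A,\overline B)=h(A,B)$. Since $d_\infty(u,v)=\bigoplus_{\alpha\in[0,1]}h([u]^\alpha,[v]^\alpha)$, it suffices to bound $h([\mathcal{Z_R}(u)]^\alpha,[\mathcal{Z_R}(v)]^\alpha)$ by $\lambda\,d_\infty(u,v)$ for every $\alpha$, where $\lambda=\max_j\lip(\phi_j)<1$.

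\emph{Cut calculus.} Fix $\alpha\in(0,1]$. First, since each $\phi_j$ is continuous and every $w\in\mathcal{F}_{X}^*$ is u.s.c. with compact cuts (so the suprema defining Zadeh's extension are attained on the compact fibers $\phi_j^{-1}(x)$), one gets $[\phi_j(w)]^\alpha=\phi_j([w]^\alpha)$, a compact set. Second, since $\rho_j$ is nondecreasing, right continuous and $\rho_j(0)=0$, the set $\{t\in[0,1]:\rho_j(t)\ge\alpha\}$ is either empty (when $\rho_j(1)<\alpha$) or a closed interval $[s_j(\alpha),1]$ with $s_j(\alpha):=\min\{t:\rho_j(t)\ge\alpha\}\in(0,1]$; hence $[\rho_j(w)]^\alpha=[w]^{s_j(\alpha)}$ when $\rho_j(1)\ge\alpha$ and $[\rho_j(w)]^\alpha=\varnothing$ otherwise. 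Third, as $J$ is finite, $[\bigvee_j w_j]^\alpha=\bigcup_j[w_j]^\alpha$. Composing these identities,
\[
[\mathcal{Z_R}(u)]^\alpha=\bigcup_{j\,:\,\rho_j(1)\ge\alpha}\phi_j\big([u]^{s_j(\alpha)}\big),\qquad\alpha\in(0,1],
\]
and the index set $\{j:\rho_j(1)\ge\alpha\}$ depends only on the $\rho_j$'s; it is nonempty because $\rho_j(1)=1$ for some $j$, and each $[u]^{s_j(\alpha)}$ is nonempty because $u$ is normal and $s_j(\alpha)\le1$. The same computation shows $\mathcal{Z_R}(u)\in\mathcal{F}_{X}^*$: it is compactly supported (a finite union of continuous images of compacta), u.s.c. (each cut is closed), and normal (the term with $\rho_j(1)=1$ makes $[\mathcal{Z_R}(u)]^1\ne\varnothing$).

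\emph{Contraction.} For $\alpha\in(0,1]$, using that the active index set is common to $u$ and $v$, the two Hausdorff inequalities give
\[
h\big([\mathcal{Z_R}(u)]^\alpha,[\mathcal{Z_R}(v)]^\alpha\big)\le\max_{j:\rho_j(1)\ge\alpha}h\big(\phi_j([u]^{s_j(\alpha)}),\phi_j([v]^{s_j(\alpha)})\big)\le\lambda\max_j h\big([u]^{s_j(\alpha)},[v]^{s_j(\alpha)}\big)\le\lambda\,d_\infty(u,v).
\]
For $\alpha=0$, write $[\mathcal{Z_R}(u)]^0=\overline{\bigcup_{\alpha>0}[\mathcal{Z_R}(u)]^\alpha}$ and use $h(\overline A,\overline B)=h(A,B)$ together with the union estimate to bound $h([\mathcal{Z_R}(u)]^0,[\mathcal{Z_R}(v)]^0)$ by $\sup_{\alpha>0}h([\mathcal{Z_R}(u)]^\alpha,[\mathcal{Z_R}(v)]^\alpha)\le\lambda\,d_\infty(u,v)$. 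Taking the supremum over $\alpha\in[0,1]$ yields $d_\infty(\mathcal{Z_R}(u),\mathcal{Z_R}(v))\le\lambda\,d_\infty(u,v)$. Since $\lambda<1$ and $(\mathcal{F}_{X}^*,d_\infty)$ is complete, the Banach fixed point theorem furnishes the unique fixed point $u$ and the convergence $d_\infty(\mathcal{Z_R}^{(k)}(v),u)\to0$.

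\emph{Main obstacle.} The metric estimate is a two-line computation; the real work is the cut calculus and its bookkeeping. One must check carefully that $[\phi_j(w)]^\alpha=\phi_j([w]^\alpha)$ genuinely holds for $\alpha>0$ (attainment of the Zadeh suprema, hence upper semicontinuity of $w$ and compactness of the fibers $\phi_j^{-1}(x)$), that $s_j(\alpha)$ is a true minimum and is strictly positive so that no spurious closure enters (here $\rho_j(0)=0$ and right continuity are essential), and that the active index set $\{j:\rho_j(1)\ge\alpha\}$ is the same for $u$ and $v$ so that the union estimate applies term by term. The $\alpha=0$ case needs the separate closure argument above. Once these points are settled, the rest is the two one-line Hausdorff inequalities and an invocation of the contraction principle.
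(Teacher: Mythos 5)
The paper does not prove this statement at all: it is imported verbatim as \cite[Theorem 2.4.1]{Cab92}, so there is no internal proof to compare against. Your argument is correct and is, in substance, the standard $\alpha$-cut proof from that reference. The decomposition $[\mathcal{Z_R}(u)]^{\alpha}=\bigcup_{j:\rho_j(1)\ge\alpha}\phi_j\big([u]^{s_j(\alpha)}\big)$ is the right reduction, and you have correctly isolated the points where care is actually needed: attainment of the Zadeh suprema (upper semicontinuity of $u$ on the compact sets $\phi_j^{-1}(x)\cap[u]^0$ --- note that when $X$ is merely complete rather than compact the fibers $\phi_j^{-1}(x)$ alone need not be compact, so intersecting with the compact support is the correct way to run that argument), the fact that monotonicity plus right continuity make $s_j(\alpha)$ a genuine minimum with $s_j(\alpha)>0$ so that $\{t:\rho_j(t)\ge\alpha\}=[s_j(\alpha),1]$, the common nonempty active index set for $u$ and $v$ (needed for the term-by-term union estimate for $h$), and the separate closure argument at $\alpha=0$. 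The verification that $\mathcal{Z_R}$ maps $\mathcal{F}_{X}^*$ into itself (normality via the index with $\rho_j(1)=1$, closed cuts, compact support) is also in order. The one external input you invoke without proof, completeness of $(\mathcal{F}_{X}^*,d_\infty)$, is exactly what the paper itself takes from \cite{Cab92} and \cite{Dia94}, so that is consistent with the surrounding text. I see no gap.
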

\begin{definition}\emph{The fuzzy set $u$ from the Theorem \ref{thm: cabrelli fixed point fractal operator} is called} the fuzzy attractor or fuzzy fractal generated by IFZS $\mathcal{Z_R}$.
\end{definition}
There are several ways to introduce topologies on $I(X)$. Canonically (see \cite{Rep10}, \cite{Zar} and \cite{ZAI}), we endow $I(X)$ with the topology $\tau_p$ of the pointwise convergence, that is, the basis of the topology $\tau_p$ consists of sets of the form
$$
\{\mu\in I(X):|\mu(\varphi_1)-\mu_1(\varphi_1)|<\varepsilon,...,|\mu(\vp_n)-\mu_n(\vp_n)|<\varepsilon\}{,}
$$
where $\varepsilon>0$, $n\in\N$, $\mu_1,...,\mu_n\in I(X)$ and $\vp_1,...,\vp_n\in C(X, \mathbb{R})  $. Equivalently, $\tau_p$ is the topology on $C(X, \mathbb{R})  $ induced by the Tychonoff product topology on $\R^X$.\\
Clearly, for $(\mu_n)\subset I(X)$ and $\mu\in I(X)$, we have that
$\mu_n \overset{\tau_p}{\to} \mu$ in $I(X)$ {if and only if} $\mu_n(\psi) \to \mu(\psi)$ for all $\psi \in C(X, \mathbb{R})  $.

From \cite{dCOS20fuzzy} we know that given the scale map $\theta: [-\infty,\; 0] \to [0,1]$ defined by $\theta(t)=e^t$ (there are other possible choices), it induces a bijection $\Theta$ between $I(X)$ and $\mathcal{F}_{X}^* $ given by
$$u(x)=\Theta(\mu)(x)=\theta(\lambda_{\mu}(x)),$$
for any $\mu \in I(X)$ with density $\lambda_{\mu}$.

Moreover, given a mpIFS $\mathcal{S}=(X, (\phi_j)_{j\in J}, (q_j)_{j\in J})$, where $\phi_{j}:X \to X$ and $q_{j} \in [-\infty, 0]$ such that $\bigoplus_{j \in J} q_{j}=0$, by \cite{dCOS20fuzzy} we know that, given the associated IFZS $\mathcal{Z_R}=(X,(\phi_j)_{j\in J},(\rho_j)_{j\in J})$, where the grey level functions are
$$\rho_j(t):=\theta(q_j + \theta^{-1}(t)),\; t \in [0,1],$$
we have,
$$\Theta \circ M_{\phi ,q} = \mathcal{Z_R}\circ \Theta.$$
In particular $M_{\phi ,q}(\mu)=\mu$ if, and only if, $\mathcal{Z_R}(u)=u$ for $u=\Theta(\mu)$.

Then, we can introduce a metric $d_{\theta}$ induced by $\Theta$ from the metric space of fuzzy sets $(\mathcal{F}_{X}^* ,d_{\on{f}})$, given by $d_{\theta}(\mu, \nu)= {d_{\on{f}}}(\Theta(\mu), \Theta(\nu)),$ in such way that the spaces $(I(X),d_\theta)$ and $(\mathcal{F}_{X}^* ,d_{\on{f}})$ are homeomorphic. By \cite[Theorem 3.5]{dCOS20fuzzy}  is complete, since $(X,d)$ is compact:
\begin{lemma}\cite[Lemma 5.7]{dCOS20fuzzy}\label{filipmetric}
For every  $\mu=\bigoplus_{x\in X}\lambda(x)\odot\delta_x,\;\nu=\bigoplus_{x\in X}\eta(x)\odot\delta_x\in I(X)$, we have
$$
d_\theta(\mu,\nu)=\bigoplus_{\beta\in(-\infty,0]}h(\{x\in X:\lambda(x)\geq \beta\},\{x\in X:\eta(x)\geq \beta\}),
$$
where $h$ is the Hausdorff distance.
\end{lemma}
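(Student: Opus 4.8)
The plan is to unwind the chain of definitions until the statement reduces to a change of variables in the scale map $\theta$, together with a short argument that the $0$-cut contributes nothing new to the supremum. First I would set $u=\Theta(\mu)$ and $v=\Theta(\nu)$, so that $u(x)=\theta(\lambda(x))=e^{\lambda(x)}$ and $v(x)=e^{\eta(x)}$, and recall that by construction $d_\theta(\mu,\nu)=d_{\on{f}}(u,v)$, where $d_{\on{f}}$ is the metric $d_\infty(u,v)=\bigoplus_{\alpha\in[0,1]}h([u]^\alpha,[v]^\alpha)$ recalled earlier. Since $\mu,\nu\in I(X)$, their densities $\lambda,\eta$ are u.s.c.\ with $\bigoplus_x\lambda(x)=\bigoplus_x\eta(x)=0$; hence $u,v$ are normal, u.s.c.\ and compactly supported, and for every $\alpha\in[0,1]$ the cuts $[u]^\alpha,[v]^\alpha$ are nonempty (they all contain a maximizer of $\lambda$, resp.\ $\eta$) closed subsets of the compact space $X$, so all Hausdorff distances below are finite and well defined.

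Next I would deal with the cuts having $\alpha\in(0,1]$. For such $\alpha$ and any $x$ one has $e^{\lambda(x)}\geq\alpha\iff\lambda(x)\geq\log\alpha$ (with the convention $e^{-\infty}=0$ this also covers $\lambda(x)=-\infty$), so $[u]^\alpha=\{x\in X:\lambda(x)\geq\log\alpha\}$ and similarly for $v$. As $\beta=\log\alpha$ runs bijectively over $(-\infty,0]$ while $\alpha$ runs over $(0,1]$, this gives
\[
\bigoplus_{\alpha\in(0,1]}h([u]^\alpha,[v]^\alpha)=\bigoplus_{\beta\in(-\infty,0]}h(\{x:\lambda(x)\geq\beta\},\{x:\eta(x)\geq\beta\}),
\]
which is exactly the right-hand side of the claimed formula.

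It then remains to show that the $\alpha=0$ term does not increase the supremum, i.e.\ $h([u]^0,[v]^0)\leq D$, where $D:=\bigoplus_{\alpha\in(0,1]}h([u]^\alpha,[v]^\alpha)$. Since the cuts increase as $\alpha\downarrow 0$ and $[u]^0=\overline{\bigcup_{\alpha>0}[u]^\alpha}$, given $x\in[u]^0$ and $\varepsilon>0$ one picks $\alpha>0$ and $x'\in[u]^\alpha$ with $d(x,x')<\varepsilon$; then $d(x',[v]^\alpha)\leq h([u]^\alpha,[v]^\alpha)\leq D$, and $[v]^\alpha\subseteq[v]^0$ gives $d(x,[v]^0)<D+\varepsilon$. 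Letting $\varepsilon\to 0$ and arguing symmetrically yields $h([u]^0,[v]^0)\leq D$. Combining the three observations,
\[
d_\theta(\mu,\nu)=d_\infty(u,v)=\bigoplus_{\alpha\in[0,1]}h([u]^\alpha,[v]^\alpha)=D=\bigoplus_{\beta\in(-\infty,0]}h(\{x:\lambda(x)\geq\beta\},\{x:\eta(x)\geq\beta\}).
\]
The only step that is not pure bookkeeping is this last one, controlling the closure defining $[u]^0$, and it is precisely where normality (to keep every cut nonempty) and upper semicontinuity (to keep every cut closed) are used; everything else is the substitution $\alpha=e^{\beta}$.
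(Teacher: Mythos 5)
Your argument is correct. Note that the paper does not prove this lemma at all --- it is imported by citation from \cite[Lemma 5.7]{dCOS20fuzzy} --- so there is no internal proof to compare against; what you have written is a self-contained verification. The reduction is exactly the right one: the identification $d_\theta(\mu,\nu)=d_\infty(u,v)$ with $u=\Theta(\mu)$, $v=\Theta(\nu)$, the substitution $\beta=\log\alpha$ matching the $\alpha$-cuts with $\alpha\in(0,1]$ to the superlevel sets $\{\lambda\geq\beta\}$ with $\beta\in(-\infty,0]$, and then the only non-trivial point, namely that the $\alpha=0$ cut $[u]^0=\overline{\bigcup_{\alpha>0}[u]^\alpha}$ cannot strictly increase the supremum. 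Your handling of that last point is sound: approximating $x\in[u]^0$ by $x'\in[u]^\alpha$ within $\varepsilon$, pushing $x'$ into $[v]^\alpha\subseteq[v]^0$ at cost at most $D$, and symmetrizing gives $h([u]^0,[v]^0)\leq D$. You also correctly identify where normality (every cut nonempty, via a maximizer of the u.s.c.\ density on the compact $X$) and upper semicontinuity (every cut closed) enter, so that all the Hausdorff distances are well defined. The one presentational caveat is that the paper never formally states $d_{\on{f}}=d_\infty$; you make that identification explicit, which is the intended reading and is harmless.
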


\begin{proposition}\cite[Proposition 5.8]{dCOS20fuzzy}\label{fil11}
The metric space $(I(X),\, d_\theta)$ is complete and the topology $\tau_\theta$ induced by $d_\theta$ is finer than the  pointwise convergence topology $\tau_p$. In other words, $\tau_p\subset\tau_\theta$.
\end{proposition}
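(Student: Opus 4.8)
The statement has two independent parts, and I would treat them separately.

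\textbf{Completeness.} The plan is to exploit that, by its very definition, $d_\theta(\mu,\nu)=d_{\on{f}}(\Theta(\mu),\Theta(\nu))$, so the bijection $\Theta:I(X)\to\mathcal{F}_{X}^*$ is a surjective isometry onto $(\mathcal{F}_{X}^*,d_{\on{f}})$. Since $X$ is compact, the latter is a complete metric space (see \cite{Cab92} and \cite[Theorem 3.5]{dCOS20fuzzy}), and completeness is invariant under isometric bijections; hence $(I(X),d_\theta)$ is complete. Nothing else is needed: $d_\theta$ is built as the pullback of $d_{\on{f}}$, so the claim is immediate once completeness of the fuzzy-set space is invoked.

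\textbf{$\tau_p\subset\tau_\theta$.} Here I would use that $\tau_p$ is the coarsest topology on $I(X)$ making every evaluation $\mathrm{ev}_\psi:\mu\mapsto\mu(\psi)$, $\psi\in C(X,\mathbb{R})$, continuous; thus it is enough to prove that each $\mathrm{ev}_\psi$ is $d_\theta$-continuous, and I would in fact establish uniform continuity. Fix $\psi\in C(X,\mathbb{R})$ with (nondecreasing, finite) modulus of uniform continuity $\omega_\psi$ on the compact $X$, and let $\mu=\bigoplus_{x\in X}\lambda(x)\odot\delta_x$, $\nu=\bigoplus_{x\in X}\eta(x)\odot\delta_x$, with densities $\lambda,\eta\in U(X,\mathbb{R}_{\max})$ normalized by $\bigoplus_{x\in X}\lambda(x)=\bigoplus_{x\in X}\eta(x)=0$. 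Since $\lambda\odot\psi=\lambda+\psi\in U(X,\mathbb{R}_{\max})$, it attains its supremum at some $x^*$, so $\mu(\psi)=\lambda(x^*)+\psi(x^*)$, and $\beta:=\lambda(x^*)\in(-\infty,0]$ because $\mu(\psi)$ and $\psi(x^*)$ are finite. Put $A_\beta:=\{x\in X:\lambda(x)\ge\beta\}$ and $B_\beta:=\{x\in X:\eta(x)\ge\beta\}$: both are nonempty (each contains a maximizer of the corresponding density) and compact (closed, by upper semicontinuity, in the compact $X$). The key estimate, via Lemma~\ref{filipmetric} and the definition of the Hausdorff distance $h$, is
\[
d_\theta(\mu,\nu)\ \ge\ h(A_\beta,B_\beta)\ \ge\ d(x^*,B_\beta),
\]
so by compactness there is $z\in B_\beta$ with $d(x^*,z)\le d_\theta(\mu,\nu)$; then $\eta(z)\ge\beta=\lambda(x^*)$ gives
\[
\nu(\psi)\ \ge\ \eta(z)+\psi(z)\ \ge\ \lambda(x^*)+\psi(z)\ =\ \mu(\psi)+\bigl(\psi(z)-\psi(x^*)\bigr),
\]
and since $d(x^*,z)\le d_\theta(\mu,\nu)$ and $\omega_\psi$ is nondecreasing this yields $\nu(\psi)\ge\mu(\psi)-\omega_\psi(d_\theta(\mu,\nu))$. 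Exchanging the roles of $\mu$ and $\nu$ gives the symmetric inequality, hence $|\mu(\psi)-\nu(\psi)|\le\omega_\psi(d_\theta(\mu,\nu))$, so $\mathrm{ev}_\psi$ is $d_\theta$-continuous for every $\psi$ and therefore $\tau_p\subset\tau_\theta$. (Alternatively, since $\tau_\theta$ is metrizable, it would suffice to verify that $d_\theta(\mu_n,\mu)\to0$ forces $\mu_n(\psi)\to\mu(\psi)$ for all $\psi$, which is the same estimate.)

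\textbf{Main obstacle.} There is no deep difficulty; the work is two layers of bookkeeping. First, one must make sure all the suprema and infima involved are attained and that the super-level sets of Lemma~\ref{filipmetric} are nonempty and compact — this is exactly where compactness of $X$, upper semicontinuity of densities, and the normalization $\bigoplus_{x\in X}\lambda(x)=0$ come in. Second, one must carry the dictionary between the idempotent operations ($\oplus\leftrightarrow\sup$, $\odot\leftrightarrow+$) consistently through the chain of inequalities relating $\mu(\psi)$, $\nu(\psi)$ and the Hausdorff distance between level sets. For the completeness half, the only subtlety is to notice that the definition of $d_\theta$ reduces the statement to the already-established completeness of $(\mathcal{F}_{X}^*,d_{\on{f}})$ for compact $X$.
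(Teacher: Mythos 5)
Your argument is correct. Note, however, that the paper does not prove this statement at all: it is imported verbatim as \cite[Proposition~5.8]{dCOS20fuzzy}, so there is no in-text proof to compare against, and what you have written is a genuine self-contained derivation of the cited result. The completeness half is the expected one-liner (by construction $d_\theta$ is the pullback of $d_\infty$ under the bijection $\Theta$, so $(I(X),d_\theta)$ is isometric to the complete space $(\mathcal{F}_X^*,d_\infty)$). The inclusion $\tau_p\subset\tau_\theta$ is where you add real content: reducing it to $d_\theta$-continuity of each evaluation $\mathrm{ev}_\psi$ and then extracting, from Lemma~\ref{filipmetric}, the quantitative bound $|\mu(\psi)-\nu(\psi)|\le\omega_\psi(d_\theta(\mu,\nu))$ is clean and all the hypotheses you need are in place --- the maximizer $x^*$ of the u.s.c. function $\lambda+\psi$ exists by compactness, $\beta=\lambda(x^*)$ is finite and lies in $(-\infty,0]$ so it is an admissible level in Lemma~\ref{filipmetric}, both superlevel sets at level $\beta$ are nonempty (each density attains its supremum $0$) and compact, and the one-sided estimate symmetrizes. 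The only cosmetic remark is that you could state explicitly that $\omega_\psi(t)\to 0$ as $t\to 0^+$, which is what turns the displayed inequality into continuity; this is automatic for a continuous $\psi$ on a compact metric space and does not affect correctness.
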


As pointed in \cite[Example 5.9]{dCOS20fuzzy}, $(I(X),\, d_\theta)$ may not be compact. Although, the topological space $(I(X),\,  \tau_p)$ is compact \cite[Theorem 5.3]{Rep10}.

\begin{proposition}\label{prop:appl fuzzy}
    Let $\mathcal{S}=(X, (\phi_j)_{j\in J}, (q_j)_{j\in J})$ be a mpIFS, where $q_j(x)=q_j$ does not depend of $x$, and the associated IFZS
    $$\mathcal{Z_R}=(X,(\phi_j)_{j\in J},(\rho_j)_{j\in J}),$$
    where $\rho_j=e^{q_j}t$ and $\theta$ is a scale function, for instance $\theta(t)=e^t$ (that is, $\theta(q_j + \theta^{-1}(t))=e^{q_j}t$), then the unique fuzzy attractor of $\mathcal{Z_R}$ satisfies
    $$u(x)= \bigoplus_{(j_1,j_2,j_3,...) \in \pi^{-1}(x)}e^{(q_1+q_2+q_3+...)}.$$
\end{proposition}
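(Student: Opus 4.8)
The plan is to derive this as a direct corollary of Theorem~\ref{thm:represent idemp invar cte} together with the conjugation established in \cite{dCOS20fuzzy} and recalled just above. First I would observe that the hypotheses are exactly those of Theorem~\ref{thm:represent idemp invar cte}: $J$ is finite (hence a compact metric space), $\mathcal{S}=(X,\phi,q)$ is a mpIFS with $q_j(x)=q_j$ independent of $x$, and $\bigoplus_{j\in J}q_j=0$. Therefore there is a unique invariant idempotent probability $\mu$ for $M_{\phi,q}$, and its density $\lambda\in U(X,\mathbb{R}_{\max})$ satisfies, for any (equivalently, every) $z\in\Omega$,
\[
\lambda(x) = S(x,z) = \bigoplus_{(j_1,j_2,j_3,\ldots)\in\pi^{-1}(x)}[q_{j_1}+q_{j_2}+q_{j_3}+\cdots].
\]

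Next I would invoke the bijection $\Theta:I(X)\to\mathcal{F}^*_X$ with $\Theta(\mu)(x)=\theta(\lambda_\mu(x))=e^{\lambda_\mu(x)}$ and the intertwining relation $\Theta\circ M_{\phi,q}=\mathcal{Z_R}\circ\Theta$, where $\mathcal{Z_R}$ is the IFZS with grey level maps $\rho_j(t)=\theta(q_j+\theta^{-1}(t))=e^{q_j}t$. Since $M_{\phi,q}(\mu)=\mu$, this gives $\mathcal{Z_R}(u)=u$ for $u:=\Theta(\mu)$. One should check that this $u$ is the \emph{unique} fixed point of $\mathcal{Z_R}$ in $\mathcal{F}^*_X$: uniqueness of the idempotent fixed point from Theorem~\ref{thm:represent idemp invar cte} transfers through the bijection $\Theta$, and it also matches the fixed-point statement of Theorem~\ref{thm: cabrelli fixed point fractal operator} once one notes that the maps $\phi_j$ are contractions (by \eqref{eq: gamma contraction}) so the IFZS is contractive and the admissibility of $(\rho_j)$ follows from $\rho_j(0)=0$, $\rho_j$ nondecreasing and right continuous, and $\rho_j(1)=e^{q_j}=1$ for any $j\in J_0$, which is nonempty.

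Finally I would simply apply $\theta$ to the formula for $\lambda$. Using $\theta(t)=e^t$, the identity $\theta(a\oplus b)=\theta(\max(a,b))=\max(e^a,e^b)=e^a\oplus e^b$ and continuity/monotonicity of $\theta$ to pass it through the supremum $\bigoplus$, we obtain
\[
u(x)=\Theta(\mu)(x)=e^{\lambda(x)}=\exp\!\Big(\bigoplus_{(j_1,j_2,\ldots)\in\pi^{-1}(x)}[q_{j_1}+q_{j_2}+\cdots]\Big)=\bigoplus_{(j_1,j_2,\ldots)\in\pi^{-1}(x)}e^{(q_1+q_2+q_3+\cdots)},
\]
with the usual convention $e^{-\infty}=0$ handling the case $\pi^{-1}(x)=\varnothing$, consistent with the definition of $\phi(X)$ in Zadeh's extension (if $x\notin\phi(X)$ then no infinite word maps to it, $\lambda(x)=-\infty$, and $u(x)=0$). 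This is exactly the claimed formula.

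The routine points are the identification of hypotheses and the bookkeeping of $\theta$ commuting with $\oplus$ and $\sup$; the only place requiring a moment of care is making sure the ``unique fuzzy attractor'' of $\mathcal{Z_R}$ is literally the image under $\Theta$ of the unique invariant idempotent probability — i.e.\ that the two uniqueness statements (Theorem~\ref{thm: cabrelli fixed point fractal operator} and Theorem~\ref{thm:represent idemp invar cte}) refer to the same object via $\Theta$ — which follows because $\Theta$ is a bijection intertwining $M_{\phi,q}$ and $\mathcal{Z_R}$, so fixed points correspond bijectively. I do not expect any genuine obstacle here; the proposition is essentially a translation of the already-proved Theorem~\ref{thm:represent idemp invar cte} into the fuzzy language.
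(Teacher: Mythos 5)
Your proposal is correct and follows exactly the paper's own (much terser) argument: invoke Theorem~\ref{thm:represent idemp invar cte} for the density formula and transfer it through the conjugation $\Theta\circ M_{\phi,q}=\mathcal{Z_R}\circ\Theta$ with $\theta(t)=e^t$. The extra checks you include (admissibility of the grey level maps, the correspondence of the two uniqueness statements via the bijection $\Theta$) are sensible bookkeeping that the paper leaves implicit, but they do not change the route.
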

\begin{proof}
  From Proposition~\ref{thm:represent idemp invar cte} we know that the density $\lambda$ of the unique invariant measure $\mu$ is given by $\lambda(x) = \bigoplus_{(j_1,j_2,j_3,...) \in \pi^{-1}(x)}(q_1+q_2+q_3+...)$.
  Since $M_{\phi ,q}(\mu)=\mu$ if, and only if, $\mathcal{Z_R}(u)=u$ for $u=\Theta(\mu)=\theta(\lambda(x))=e^{\lambda(x)}$ the result follows.
\end{proof}

\begin{remark}
   Proposition~\ref{prop:appl fuzzy} gives an alternative way to obtain a representation of the fuzzy fractals, when the grey level function has the form $\rho_j=\theta(q_j + \theta^{-1}(t)), \;j \in J$. Theorem~\ref{thm: cabrelli fixed point fractal operator} gives only the existence or the approximation, via iteration. Other way is a discrete scheme proved in \cite[Theorem 6.3]{dCOS20fuzzy}.
\end{remark}

\begin{remark}
  In \cite{Cab92} we found the following statement ``It should be mentioned that in some practical treatments of the inverse problem, success has already been achieved by employing more general sets of grey level functions $\varphi_i$ : for example, ``place-dependent" grey level maps $\varphi_i: [0,1] \rightarrow[0,1]$. However, this will be the subject of future work." We were not able to find subsequent developments on fuzzy fractals arising from place-dependent grey level maps. As we now have a complete description of the invariant idempotent probabilities for place-dependent mpIFS, we can associate  to a given a mpIFS $\mathcal{S}=(X,\phi, q)$, where $\phi_{j}:X \to X$ and $q_{j}:X \to (-\infty, 0]$ such that $\bigoplus_{j \in J} q_{j}(x)=0, \forall x \in X$, by the associated IFZS $\mathcal{Z_R}=(X,(\phi_j)_{j\in J},(\rho_j)_{j\in J})$, where the place-dependent grey level functions are given by
$$\rho_j(t,x):=\theta(q_j(x) + \theta^{-1}(t)),\; t \in [0,1],\; x \in X$$
then,
$$\Theta \circ M_{\phi ,q} = \mathcal{Z_R}\circ \Theta.$$
In particular $M_{\phi ,q}(\mu)=\mu$ if, and only if, $\mathcal{Z_R}(u)=u$ for $u=\Theta(\mu)$.

Indeed, taking the scale function $\theta(t)=e^{t}$ we get $$\rho_j(t,x)=\theta(q_j(x) + \theta^{-1}(t))=e^{q_j(x)}t$$
thus, recalling that $u(x)=\Theta (\mu)(x)=\theta(\lambda(x))=e^{\lambda(x)}$, for $\mu=\bigoplus_{x\in X}\lambda(x)\odot\delta_x$, we obtain
$$ \mathcal{Z_R}\circ \Theta (\mu) =\mathcal{Z_R}(u)(x)= \bigvee_{j \in J}\phi_j \rho_j(u(x),x)=\bigvee_{j \in J} \phi_j(e^{q_j}u)(x)=$$
$$= \bigvee_{j \in J} \max_{(j,y)\in \phi^{-1}(x)} e^{q_j(y)} u(y)= \bigoplus_{(j,y)\in \phi^{-1}(x)}e^{q_j(y)}e^{\lambda(y)}=
e^{\bigoplus_{(j,y)\in \phi^{-1}(x)} q_j(y)\odot\lambda(y)}=$$
$$=\theta(L_{\phi,q}(\lambda)(x))=\Theta(M_{\phi ,q}(\mu))(x),$$
where $L_{\phi,q}(\lambda)(x):=\max_{(j,y)\in \phi^{-1}(x)} q_{j}(y) \odot  \lambda(y).$

So we have a full characterization of the fuzzy fractals (probably not attractors, because we do not have uniqueness) associated to the above IFZS using the characterization of the invariant idempotent measure for the correspondent mpIFS.
\end{remark}

\section{Appendix A: Fundamentals of idempotent analysis}\label{sec:Fundamentals of idempotent analysis}

The following exposition is based on the ideas of Kolokol'tsov and  Maslov~\cite{KM89}, who considered a different setting for applications of idempotent analysis. See also \cite{MR1035374}, \cite{Kol97}, \cite{LitMas96}, \cite{MZ}, \cite{Rep10}, \cite{Zar}, \cite{ZAI}, \cite{dCOS20fuzzy} and \cite{ExistInvIdempotent}, among many others for additional references.

\subsection{Proof of Theorem \ref{teo : densidade Maslov}}
The proof of Theorem \ref{teo : densidade Maslov} will be divided in several parts as exposed below.

\begin{proposition}
  Any $m\in C^*(X,\mathbb{R})$ is order preserving, that is, if $f \leq g$ then $m( f) \leq m( g)$. Moreover
  $$\min_{X} f  \leq m(f) - m(0) \leq \max_{X} f.$$
\end{proposition}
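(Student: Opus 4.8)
The plan is to get order-preservation directly from max-plus additivity, and then obtain the two-sided estimate by squeezing $f$ between the constant functions $\min_X f$ and $\max_X f$ and exploiting max-plus homogeneity.

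First I would establish monotonicity. Suppose $f\le g$ pointwise. Then, as elements of $C(X,\mathbb{R})$, we have $f\oplus g=g$, since $(f\oplus g)(x)=\max\{f(x),g(x)\}=g(x)$ for every $x$. Applying $m$ and using max-plus additivity gives $m(g)=m(f\oplus g)=m(f)\oplus m(g)=\max\{m(f),m(g)\}$, and this equality forces $m(f)\le m(g)$. This is the only genuinely ``structural'' step, and it is immediate once one notices the pointwise identity $f\oplus g=g$.

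Next I would compute $m$ on constant functions. For $c\in\mathbb{R}$, writing the constant function $c$ as $c\odot \mathbb{0}_{\text{fun}}$, where I mean the zero-valued constant function (so that $(c\odot 0)(x)=c+0=c$), max-plus homogeneity yields $m(c)=c\odot m(0)=c+m(0)$. Here $m(0)\in\mathbb{R}$ because $m$ takes values in $\mathbb{R}$, so all the arithmetic below is ordinary addition/subtraction of reals.

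Finally, since $X$ is compact, $f$ attains $c_1:=\min_X f$ and $c_2:=\max_X f$, and $c_1\le f\le c_2$ holds pointwise (comparing $f$ with the corresponding constant functions). Applying the monotonicity of the first step and then the constant computation gives $c_1+m(0)=m(c_1)\le m(f)\le m(c_2)=c_2+m(0)$, and subtracting $m(0)$ produces $\min_X f\le m(f)-m(0)\le \max_X f$, as claimed. I do not expect any real obstacle; the only points requiring a line of care are the identification of a real number with the associated constant function and the observation that $m(0)$ is finite, so that the subtraction in the final display is legitimate.
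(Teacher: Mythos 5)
Your proposal is correct and follows essentially the same route as the paper's proof: order-preservation from the identity $f\oplus g=g$ together with max-plus additivity, and the two-sided bound by squeezing $f$ between the constant functions $\min_X f$ and $\max_X f$ and evaluating $m$ on constants via max-plus homogeneity ($m(c)=c\odot m(0)$). No gaps; the extra remarks about finiteness of $m(0)$ and identifying constants with constant functions are exactly the care the paper takes implicitly.
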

\begin{proof}
  If $f \leq g$ then $f \oplus g = g$. As  $m$ is max-plus additive we obtain $m(g) = m(f \oplus g)=m( f) \oplus m( g)$, thus $m( f) \leq m( g)$.

  As we have a function $m: C(X,\mathbb{R}) \to \mathbb{R}$, then we get $m(0)\neq -\infty$. As for any $f \in C(X, \mathbb{R})$, $\min_{X} f \leq f(x) \leq \max_{X} f$, we obtain
  $$m(\min_{X} f ) \leq m(f) \leq m(\max_{X} f)$$
  $$m((\min_{X} f)\odot 0) \leq m(f) \leq m((\max_{X} f)\odot 0)$$
  $$(\min_{X} f) \odot m(0) \leq m(f) \leq (\max_{X} f) \odot m( 0).$$
\end{proof}

 The semimodule $\mathcal{V}:=(C(X,\mathbb{R}), \oplus, \odot)$ can be topologized with the usual structure given by the metric $d_{\infty}(f,g)=\bigoplus_{x\in X} |f(x)- g(x)|$.  Introducing a metric $\rho: \mathbb{R}_{\max} \times \mathbb{R}_{\max} \to [0, +\infty)$ given by $\rho(a,b):=|\exp(a)-\exp(b)|$ we obtain a topological semiring $(\mathbb{R}_{\max}, \rho)$ and for the set $\mathcal{F}(X,\mathbb{R}_{\max}):=\{ f: X \to \mathbb{R}_{\max}\}$ we can consider the topology given by  $$d_{\rho}(f,g)=\bigoplus_{x\in X} \rho(f(x), g(x))=\bigoplus_{x\in X} |\exp(f(x))- \exp(g(x))|,$$ for any $f, g \in \mathcal{F}(X,\mathbb{R}_{\max})$.

\begin{proposition}\label{prop: m is cont in d_sup}
  Any $m\in C^*(X,\mathbb{R})$ is nonexpansive with respect to the usual sup-norm $d_{\infty}$ in $C(X, \mathbb{R})$ and the absolute value $|\cdot|$ in $\mathbb{R}$. In particular it is continuous.
\end{proposition}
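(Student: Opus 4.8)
The plan is to deduce the nonexpansiveness directly from two facts already at hand: the order preservation of $m$ (the previous proposition) and the max-plus homogeneity of $m$ (part of the definition of a max-plus linear functional). The two-sided estimate $\min_X f\le m(f)-m(0)\le\max_X f$ is the conceptual source of the bound, but it is cleanest to argue as follows.

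First I would fix $f,g\in C(X,\mathbb{R})$ and set $\delta:=d_\infty(f,g)=\bigoplus_{x\in X}|f(x)-g(x)|$, which is finite because $X$ is compact and $f,g$ are continuous. Then $f(x)\le g(x)+\delta$ for every $x\in X$, that is, $f\le \delta\odot g$ in $C(X,\mathbb{R})$. Applying order preservation of $m$ and then homogeneity with the scalar $\delta\in\mathbb{R}$, we get
\[m(f)\le m(\delta\odot g)=\delta\odot m(g)=m(g)+\delta.\]
Interchanging the roles of $f$ and $g$ gives $m(g)\le m(f)+\delta$, and together these two inequalities yield $|m(f)-m(g)|\le\delta=d_\infty(f,g)$, which is exactly the claimed nonexpansiveness. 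Continuity of $m$ from $(C(X,\mathbb{R}),d_\infty)$ to $(\mathbb{R},|\cdot|)$ is then immediate, since any $1$-Lipschitz map between metric spaces is (uniformly) continuous.

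I do not expect a genuine obstacle here; the only point requiring a little care is to treat the constant $\delta$ as a max-plus scalar, writing $g+\delta=\delta\odot g$, so that the homogeneity axiom $m(a\odot h)=a\odot m(h)$ applies verbatim with $a=\delta$ — which is legitimate precisely because $\delta$ is finite (so $\delta\neq\mathbb{0}$ and the scalar action is the ordinary addition).
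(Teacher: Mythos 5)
Your proposal is correct and follows essentially the same route as the paper: both start from the pointwise bound $f\le g+d_\infty(f,g)$, apply order preservation together with max-plus homogeneity to get $m(f)\le m(g)+d_\infty(f,g)$, and symmetrize. The paper writes the two one-sided estimates as a single chain of displayed inequalities, but the content is identical.
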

\begin{proof}
   Indeed,
   $$ -d_{\infty}(f, g) + g(x) \leq f(x) \leq d_{\infty}(f, g) + g(x), \forall x \in X$$
   $$ -d_{\infty}(f, g) + m(g) \leq m(f) \leq d_{\infty}(f, g) + m(g)$$
   $$ |m(f) - m(g)| \leq d_{\infty}(f, g).$$
\end{proof}

\begin{definition} We say that a sequence of functions $f_n \in C(X,\mathbb{R})$ converges pointwise to a function $f \in \mathcal{F}(X,\mathbb{R}_{\max})$ if $\lim_{n\to\infty} \rho(f_n(x),f(x)) =0$, for any $x\in X$. Equivalently:\newline
1. $\lim_{n\to+\infty}f_n(x) = f(x)$, for all $x\in X$ such that $f(x)\in\mathbb{R} $\newline
2. $\lim_{n\to+\infty}f_n(x) = -\infty$, for all $x\in X$ such that $f(x)=-\infty. $
\end{definition}

\begin{definition}\label{def: Dirac function}
   Given $a \in \mathbb{R}$ and $x \in X$, a fixed point, we define the Dirac function
   \[ g^{a}_{x}(y):=
   \left\{
     \begin{array}{ll}
       a, & y=x \\
       -\infty, & y\neq x
     \end{array}
   \right.
   \]
and $\Delta(X, \mathbb{R})$  the set of these functions.
\end{definition}

\begin{lemma}\label{lem: e particular seq}
   There exists a monotone nonincreasing sequence of continuous functions $f_n\in C(X,\mathbb{R})$ which converges pointwise to $g^{a}_{x}$.
\end{lemma}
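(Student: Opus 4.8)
The plan is to exhibit an explicit sequence built from the metric $d$ of $X$, with no compactness or partition-of-unity machinery needed. First I would define
\[
f_n(y) := a - n\, d(x,y), \qquad y \in X,\ n \in \mathbb{N}.
\]
Since $y \mapsto d(x,y)$ is $1$-Lipschitz, hence continuous, and takes finite real values on $X$, each $f_n$ is a continuous real-valued function, so $f_n \in C(X,\mathbb{R})$.

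Next I would observe that the sequence is monotone nonincreasing: for every $y \in X$ one has $f_n(y) - f_{n+1}(y) = d(x,y) \geq 0$, so $f_{n+1} \leq f_n$ pointwise, which is exactly the order relation in the semimodule $\mathcal{V}$.

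It then remains to check pointwise convergence to $g^a_x$ in the sense of the definition preceding the lemma, i.e. that $f_n(y) \to g^a_x(y)$ for every $y$ with $g^a_x(y) \in \mathbb{R}$ and $f_n(y) \to -\infty$ for every $y$ with $g^a_x(y) = -\infty$. At $y = x$ we have $f_n(x) = a$ for all $n$, and $g^a_x(x) = a$; at any $y \neq x$ we have $d(x,y) > 0$, hence $f_n(y) = a - n\, d(x,y) \to -\infty = g^a_x(y)$. Equivalently, $\rho(f_n(y), g^a_x(y)) = |\exp(f_n(y)) - \exp(g^a_x(y))| \to 0$ in both cases, so $f_n \to g^a_x$ pointwise as required.

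There is essentially no obstacle here; the only point deserving a moment's care is to match the notion of pointwise convergence being used to the metric $\rho$ on $\mathbb{R}_{\max}$ (so that convergence of real values to $-\infty$ is permitted), which the explicit linear formula supplies directly. Any strictly increasing unbounded function of $d(x,\cdot)$ would work in place of $n\, d(x,\cdot)$; the linear choice is simply the most economical.
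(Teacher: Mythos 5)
Your proof is correct and proves the lemma as stated: the cone $f_n(y)=a-n\,d(x,y)$ is continuous, decreases in $n$ since $f_n-f_{n+1}=d(x,\cdot)\ge 0$, equals $a$ at $y=x$, and tends to $-\infty$ at every $y\neq x$, which is exactly the notion of pointwise convergence to $g^a_x$ fixed before the lemma. The paper instead uses a plateau--ramp--floor function: identically $a$ on $B(x,1/n)$, identically $-n$ outside $B(x,2/n)$, linearly interpolated in between. The difference is not cosmetic for the rest of the paper: that specific sequence is named the \emph{standard continuous approximation} in Definition~\ref{def: standard continuous approximation}, and the proof of Lemma~\ref{lem: represent delta dirac property} exploits its plateau structure, e.g.\ to get $f'_n\le f_{n_0}$ when comparing the approximations of $g^a_x$ and $g^a_{x_0}$ for $x$ near $x_0$, and to cover $X$ by balls on which the approximations are constantly equal to $h(z_j)$. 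Your cone fails the first of these comparisons (it attains the value $a$ only at its apex, so $f'_n(x)=a>f_{n_0}(x)$ whenever $x\neq x_0$), so while it is a perfectly good and in fact more economical proof of the existence statement, it could not be substituted for the standard approximation in the subsequent arguments; if you wanted a single construction to carry through the appendix, the paper's choice (or any sequence constant on a shrinking ball around $x$) is the one to keep.
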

\begin{proof} A consequence of the fact that $X$ is compact metric space is that we can easily build a sequence in $C(X, \mathbb{R})$ of nonincreasing functions converging pointwise to $g^{a}_{x}$. Indeed, for each $n \in \mathbb{N}$ consider the function $f_{n}$ defined by
   \begin{equation}\label{stand} f_{n}(y):=
   \left\{
     \begin{array}{ll}
       a, & d(y,x)<1/n \\
       -n, & d(y,x)>2/n \\
       a-(n^2 + a n)(d(y,x) - 1/n), & 1/n \leq d(y,x) \leq 2/n
     \end{array}
   \right. .
  \end{equation}
It is obviously continuous in $y$ and monotone nonincreasing in $n$. Moreover, for any $y \in X$, $y\neq x$ we get $d(y,x)>2/n$ for $n$ large enough. In this case we obtain $f_n(y)\to-\infty$.
This proves the pointwise convergence.
\end{proof}

\begin{definition}\label{def: standard continuous approximation}
	Given $g^{a}_{x}$ we will call the sequence of functions $(f_{n})$ given in \eqref{stand} as the \emph{standard continuous approximation of} $g^{a}_{x}$.
\end{definition}

\begin{lemma}
	Let $(f_n)$ be a nonincreasing sequence of continuous functions converging pointwise to a continuous function $f$ in a compact set $X$. Then the convergence is uniform.	
\end{lemma}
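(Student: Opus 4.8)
This is Dini's theorem, and the plan is to reduce to the case of a nonnegative sequence decreasing to $0$ and then use compactness. Set $g_n := f_n - f$. Each $g_n$ is continuous (difference of continuous functions), we have $g_n \geq 0$ (since $f_n \downarrow f$ forces $f_n \geq f$ pointwise), the sequence $(g_n)$ is nonincreasing in $n$, and $g_n(x) \to 0$ for every $x \in X$. It suffices to prove that $g_n \to 0$ uniformly, i.e.\ that $\sup_{x\in X} g_n(x) \to 0$.

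Fix $\varepsilon > 0$. For each $x \in X$, pointwise convergence gives an index $N_x \in \mathbb{N}$ with $g_{N_x}(x) < \varepsilon$. By continuity of $g_{N_x}$, there is an open neighbourhood $U_x \ni x$ such that $g_{N_x}(y) < \varepsilon$ for all $y \in U_x$. The family $\{U_x\}_{x \in X}$ is an open cover of the compact space $X$, so there are finitely many points $x_1,\dots,x_k$ with $X = \bigcup_{i=1}^k U_{x_i}$. Put $N := \max\{N_{x_1},\dots,N_{x_k}\}$.

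Now take any $n \geq N$ and any $y \in X$. Choose $i$ with $y \in U_{x_i}$. Using that $(g_n)$ is nonincreasing in $n$ and that $n \geq N \geq N_{x_i}$, we get $g_n(y) \leq g_{N_{x_i}}(y) < \varepsilon$. Since $y \in X$ was arbitrary, $\sup_{y \in X} g_n(y) \leq \varepsilon$ for all $n \geq N$, which is exactly uniform convergence of $g_n$ to $0$, hence of $f_n$ to $f$.

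The argument is entirely routine; there is no real obstacle. The only points that require the hypotheses are (i) monotonicity, which is what lets a single index $N_{x_i}$ control \emph{all} later $g_n$ on $U_{x_i}$, and (ii) compactness, which turns the pointwise choice of neighbourhoods into a finite uniform one. (One could instead argue by contradiction: if convergence were not uniform, pick $\varepsilon>0$, indices $n_j \to \infty$ and points $y_j$ with $g_{n_j}(y_j) \geq \varepsilon$; extract a convergent subsequence $y_j \to y$ by compactness, and for fixed $m$ derive $g_m(y) \geq \varepsilon$ by monotonicity and continuity once $n_j \geq m$, contradicting $g_m(y)\to 0$. I prefer the covering version above as it is cleaner.)
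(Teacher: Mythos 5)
Your proof is correct. It is the standard open-cover proof of Dini's theorem, whereas the paper argues by contradiction with sequential compactness: it assumes points $x_n$ with $f_n(x_n)>f(x_n)+\varepsilon$, extracts a convergent subsequence $x_{n_i}\to x_0$, picks $n_j$ with $f_{n_j}(x_0)<f(x_0)+\varepsilon/2$, and uses continuity of $f$ and $f_{n_j}$ plus monotonicity to force $f_n(x)<f(x)+\varepsilon$ on a ball around $x_0$ for all $n\ge n_j$, contradicting the choice of the $x_n$. Your parenthetical remark at the end is in fact essentially the paper's argument. The two routes are equivalent in substance; yours uses the open-cover characterization of compactness and so would work verbatim on any compact topological space, while the paper's uses sequential compactness, which is available here because $X$ is a compact metric space. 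Your direct version also avoids the slightly delicate bookkeeping of the contradiction (the paper must juggle the subsequence $x_{n_i}$, the auxiliary index $n_j$, and the radius $\delta$ simultaneously), at the cost of introducing the auxiliary functions $g_n=f_n-f$. Either proof is acceptable; there is no gap in yours.
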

\begin{proof}
	Suppose in contradiction there is an $\varepsilon>0$ and a sequence of points $(x_n)\in X$ such that $f_n(x_n)>f(x_n)+\varepsilon,\; \forall\,n\in\mathbb{N}$ .  As $X$ is compact there exists a subsequence $(x_{n_i})$ and a point $x_0\in X$ such that $x_{n_i}\to x_0$. Let $n_j$ be such that $f_{n_j}(x_0)<f(x_0)+\varepsilon/2$ (it exists because we have convergence pointwise). As $f$ and $f_{n_j}$ are continuous in $x_0$ there exists a $\delta>0$ such that $d(x,x_0)<\delta \Rightarrow f_{n_j}(x)<f(x)+\varepsilon$. As $(f_n)$ is nonincreasing we also get $$(d(x,x_0)<\delta,\,n\geq n_j) \Rightarrow f_{n}(x)<f(x)+\varepsilon.$$ This is a contradiction because, by definition of $x_n$ and $x_0$, for $n$ large enough we must have $d(x_n,x_0)<\delta$ and also $f_n(x_n)>f(x_n)+\varepsilon$.	
\end{proof}

\begin{lemma} \label{maximal}
	Let $\bar{C}(X,\mathbb{R})$ be the set of functions $g:X\to\mathbb{R}_{\max}$ such that there exists a nonincreasing sequence of continuous functions $(f_n)$ converging pointwise to $g$. This set is closed with respect to $(\oplus,\odot)$ operations. Given $m \in C^{*}(X, \mathbb{R})$ an idempotent measure, we can extend it to an idempotent measure on $\bar{C}(X,\mathbb{R})$ by $\tilde{m}(g) = \lim_{n\to+\infty}m(f_n)$, where $(f_n)$ is any nonincreasing sequence of continuous functions converging pointwise to $g$.
	
	Furthermore, if $C$ is a set which is closed with respect to $(\oplus,\odot)$ operations, such that $C(X,\mathbb{R})\subseteq C \subseteq \bar{C}(X,\mathbb{R})$ and $\overline{m}$ is an idempotent measure on $C$ which extends $m$, then $\overline{m}\leq \tilde{m}$.
\end{lemma}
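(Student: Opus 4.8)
The plan is to verify the three assertions of the lemma in turn — closure of $\bar{C}(X,\mathbb{R})$ under $(\oplus,\odot)$, the well-definedness and max-plus linearity of $\tilde{m}$, and the maximality statement — where only the second requires any real work. For closure, let $(f_n)$ and $(k_n)$ be nonincreasing sequences of continuous functions converging pointwise to $g$ and $h$. For $a\in\mathbb{R}$ the sequence $(a+f_n)$ is continuous, nonincreasing, and converges pointwise to $a\odot g$, while $(\max(f_n,k_n))$ is continuous, nonincreasing, and converges pointwise to $g\oplus h$; the only points requiring comment are those where $g(x)$ or $h(x)$ equals $-\infty$, and there one merely uses that the corresponding sequence tends to $-\infty$. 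Hence $a\odot g,\,g\oplus h\in\bar{C}(X,\mathbb{R})$.

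\emph{Definition and linearity of $\tilde{m}$.} Fix a nonincreasing sequence of continuous functions $(f_n)$ converging pointwise to $g$. Since $m$ is order preserving, $(m(f_n))$ is nonincreasing and bounded above by $m(f_1)\in\mathbb{R}$, so $\lim_n m(f_n)=\inf_n m(f_n)$ exists in $\mathbb{R}_{\max}$. The crux is that this value depends only on $g$. Given a second such sequence $(f'_n)$, fix an index $k$ and $\varepsilon>0$. Since the pointwise limit of a nonincreasing sequence lies below every term, $g\le f_k$, so the functions $\max(f'_n-f_k,0)$ form a nonincreasing sequence of continuous functions converging pointwise to the continuous function $0$; by the Dini-type lemma established just above, this convergence is uniform, so there is $N$ with $f'_N\le f_k+\varepsilon$, and then $m(f'_N)\le m(f_k+\varepsilon)=m(f_k)+\varepsilon$ by order preservation and homogeneity. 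Thus $\lim_N m(f'_N)\le m(f_k)+\varepsilon$ for all $k$ and $\varepsilon$, whence $\lim_N m(f'_N)\le\lim_k m(f_k)$, and equality follows by symmetry. Consequently $\tilde{m}(g):=\lim_n m(f_n)$ is well defined; it extends $m$, using the constant sequence $f_n\equiv g$ when $g\in C(X,\mathbb{R})$. Max-plus linearity is then immediate: computing $\tilde{m}(a\odot g)$ with the sequence $(a+f_n)$ gives $a\odot\tilde{m}(g)$ by homogeneity of $m$, and computing $\tilde{m}(g\oplus h)$ with $(\max(f_n,k_n))$ gives $\tilde{m}(g)\oplus\tilde{m}(h)$, using that $m$ is max-plus additive and that the maximum of two convergent sequences in $\mathbb{R}_{\max}$ converges to the maximum of the limits.

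\emph{Maximality.} Let $C$ and $\overline{m}$ be as in the statement. First, $\overline{m}$ is order preserving on $C$: if $g\le h$ in $C$ then $g\oplus h=h$, so $\overline{m}(h)=\overline{m}(g\oplus h)=\overline{m}(g)\oplus\overline{m}(h)$, giving $\overline{m}(g)\le\overline{m}(h)$. Now let $g\in C\subseteq\bar{C}(X,\mathbb{R})$ and pick a nonincreasing sequence of continuous functions $(f_n)$ converging pointwise to $g$; since each $f_n\in C(X,\mathbb{R})\subseteq C$ and $g\le f_n$, we get $\overline{m}(g)\le\overline{m}(f_n)=m(f_n)$, and letting $n\to\infty$ yields $\overline{m}(g)\le\tilde{m}(g)$. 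The only genuinely delicate point in the entire argument is the sequence-independence of $\lim_n m(f_n)$, and it is exactly what the uniform-convergence lemma for monotone sequences on a compact space provides, applied to the auxiliary functions $\max(f'_n-f_k,0)$.
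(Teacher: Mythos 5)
Your proof is correct and follows essentially the same route as the paper: the key step in both is to compare the two approximating sequences via the Dini-type uniform convergence lemma applied to an auxiliary nonincreasing sequence (the paper uses $f_k\oplus f'_n\to f_k$ together with nonexpansiveness of $m$, you use $\max(f'_n-f_k,0)\to 0$ together with order preservation and homogeneity, which is the same idea in a slightly different dress). The closure, extension, linearity and maximality parts also match the paper's argument.
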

\begin{proof} If $g,g'\in \bar{C}(X,\mathbb{R})$ and $f_n\to g,\,f'_n\to g'$ we have $f_n \oplus f'_n \to g\oplus g'$ and $a\odot f_n \to a\odot g$ then, $\bar{C}(X,\mathbb{R}) $ is closed   with respect to $(\oplus,\odot)$ operations.
	
	Consider any sequence of monotonous nonincreasing continuous functions $(f_{n})$ converging pointwise to $g$. We claim that the value
	$\tilde{m}(g):=\lim_{n \to \infty} m(f_{n}) \in \mathbb{R}_{\max}$
	is well defined and furthermore it does not depend on $(f_n)$.
	
	Indeed, as $f_{n} \geq f_{n+1}$ we have $m(f_{n}) \geq m(f_{n+1})$. Thus, there exists the limit $\lim_{n \to \infty} m(f_{n}) \in \mathbb{R}_{\max}$. Let  $(f_{n})$ and $(f'_{n})$ be sequences of monotonous nonincreasing continuous functions converging pointwise to $g$. For any fixed $k$ we define a new sequence $\psi_{n}:=f_{k}\oplus f'_{n},$
so that $\psi_{n}$ converges pointwise to $f_{k} \in C(X,\mathbb{R})$. Applying Lemma \ref{maximal} we get that $\psi_n$ converges uniformly to $f_k$. From Proposition \ref{prop: m is cont in d_sup}  we obtain $m(f_k) = \lim_{n \to \infty} m(\psi_{n})$. Then	$m(f_k)= \lim_{n \to \infty} m(f_k \oplus f'_{n}) \geq \lim_{n \to \infty} m(f'_{n}).$ 	Now we can take the limit on the left hand side obtaining $\lim_{k\to\infty}m(f_k) \geq \lim_{n \to \infty} m(f'_{n}).$	Reversing the role of the sequences we obtain that the limits are equal.

	It is easy to see that $\tilde{m}$ is actually an extension of $m$ because, for $h \in  C(X, \mathbb{R})$ we can take the sequence $f_{n}=h, \forall n$, which is continuous, monotone and not increasing, so $\tilde{m}(h)= \lim_{n \to \infty} m(h)= m(h)$.
	
	Now we prove that $\tilde{m}$ is idempotent. If $g,g'\in \bar{C}(X,\mathbb{R})$ and $f_n\to g,\,f'_n\to g'$ we have
	$$\tilde{m}(g\oplus g') = \lim_{n \to \infty} m(f_n\oplus f'_n) =  \lim_{n \to \infty} [m(f_n)\oplus m(f'_n)] = $$ $$= [\lim_{n \to \infty} m(f_n)]\oplus[\lim_{n \to \infty} m(f'_n)]=\tilde{m}(g)\oplus\tilde{m}(g').$$
	and 	$\tilde{m}(a\odot g) = \lim_{n \to \infty} m(a\odot f_n) =  \lim_{n \to \infty} [a\odot m(f_n)] =  a\odot [\lim_{n \to \infty} m(f_n)]=a\odot \tilde{m}(g).$
	
	Finally, if $C$ is a set closed with respect  to $(\oplus,\odot)$ operations, such that $C(X,\mathbb{R})\subseteq C \subseteq \bar{C}(X,\mathbb{R})$ and $\overline{m}$ is an idempotent measure on $C$ which extends $m$, then given $g\in C$ and any nonincreasing sequence of continuous function $(f_n)$ converging pointwise to $g$ we have $g\leq f_n$. Therefore 	$\overline{m}(g) \leq \overline{m}(f_n) = m(f_n )$ 	and taking the limit in $n$ we get $\overline{m}(g) \leq \tilde{m}(g)$.
\end{proof}	

Let us remember the definition of u.s.c. function.

\begin{definition}\label{def:usc}
	A function $f: X \rightarrow \mathbb{R}_{\max}$ is called upper semi-continuous (u.s.c. for short) at a point $x_0 \in X$ if for every real $c> f\left(x_0\right)$ there exists a neighborhood $U$ of $x_0$ such that $f(x)< c$ for all $x \in U$.
	Equivalently, $f$ is upper semi-continuous at $x_0$ if and only if
	$$
	\limsup_{x \to x_0} f(x) \leq f\left(x_0\right).
	$$
\end{definition}

Next Lemma is inspired in \cite[Lemma 1]{KM89}.
\begin{lemma}\label{lem: represent delta dirac property}
  Given an idempotent measure $m \in C^{*}(X, \mathbb{R})$, consider the extension $\tilde{m}$ from Lemma~\ref{maximal}.
\begin{enumerate}
  \item The map $F:\mathbb{R} \times X \to \mathbb{R}_{\max}$ defined by
$F(a,x)=\tilde{m}(g^{a}_{x})$ is u.s.c. with respect to $x$ and monotonous in $a \in \mathbb{R}$.
  \item $F(a,x)=a \odot F(0,x)$, for any $a \in \mathbb{R}, x \in X$;
  \item $$m(h)=\bigoplus_{x \in X} F(h(x),x) = \bigoplus_{x \in X} F(0,x)\odot h(x),$$
for all $h \in C(X, \mathbb{R})$.
\end{enumerate}
\end{lemma}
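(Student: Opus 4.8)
The plan is to dispatch items (2) and (1) quickly by algebra and a short semicontinuity argument, and then to spend the effort on the representation (3), which is the only nontrivial part.

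\textbf{Items (1) and (2).} I would start from the pointwise identity $g^a_x = a\odot g^0_x$ (at $y=x$ both sides equal $a$, at $y\neq x$ both equal $-\infty$); applying max-plus homogeneity of the extension $\tilde m$ from Lemma~\ref{maximal} gives $F(a,x)=\tilde m(a\odot g^0_x)=a\odot F(0,x)$, which is item (2), and monotonicity of $F$ in $a$ follows. For upper semicontinuity of $x\mapsto F(a,x)$ at a point $x_0$, I would take the standard continuous approximation $(f_n)$ of $g^a_{x_0}$ from Definition~\ref{def: standard continuous approximation}: as soon as $d(x,x_0)<1/n$ one has $g^a_x\le f_n$ pointwise (at $y=x$ both equal $a$, elsewhere $g^a_x=-\infty$), so $F(a,x)=\tilde m(g^a_x)\le\tilde m(f_n)=m(f_n)$, using that $\tilde m$ is order preserving (if $g\le g'$ in $\bar C(X,\mathbb{R})$ then $g\oplus g'=g'$ and max-plus additivity of $\tilde m$ forces $\tilde m(g)\le\tilde m(g')$). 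Since $m(f_n)\downarrow\tilde m(g^a_{x_0})=F(a,x_0)$, taking the supremum over $x\in B(x_0,1/n)$ and then $n\to\infty$ yields $\limsup_{x\to x_0}F(a,x)\le F(a,x_0)$.

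\textbf{Item (3).} By (2) the two displayed expressions agree, so it is enough to show $m(h)=\bigoplus_{x\in X}F(h(x),x)$ for $h\in C(X,\mathbb{R})$. The inequality ``$\geq$'' is easy: $g^{h(x)}_x\le h$ pointwise, hence $F(h(x),x)=\tilde m(g^{h(x)}_x)\le\tilde m(h)=m(h)$, and one takes the supremum over $x$. The inequality ``$\leq$'' is the main obstacle, precisely because $h=\bigoplus_{x\in X}g^{h(x)}_x$ only pointwise while $m$ is merely finitely max-plus additive, so a compactness argument is unavoidable. Write $c:=\bigoplus_{x\in X}F(h(x),x)$ and $\lambda:=F(0,\cdot)$, and fix $\varepsilon>0$. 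For each $x\in X$ take the standard continuous approximation $(f^x_n)$ of $g^{h(x)+\varepsilon}_x$; then $m(f^x_n)\downarrow F(h(x)+\varepsilon,x)=h(x)+\varepsilon+\lambda(x)\le c+\varepsilon$, so there is $n(x)$ with $m(f^x_{n(x)})<c+2\varepsilon$. On $B(x,1/n(x))$ the function $f^x_{n(x)}$ is constantly equal to $h(x)+\varepsilon$, so by continuity of $h$ there is a neighbourhood $U_x\ni x$ on which $f^x_{n(x)}\ge h$. By compactness finitely many $U_{x_1},\dots,U_{x_N}$ cover $X$, whence $h\le\bigoplus_{i=1}^N f^{x_i}_{n(x_i)}$; finite max-plus additivity together with order preservation of $m$ then give $m(h)\le\bigoplus_{i=1}^N m(f^{x_i}_{n(x_i)})<c+2\varepsilon$. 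Letting $\varepsilon\to0$ gives $m(h)\le c$, and combined with ``$\geq$'' this finishes the proof. (If $c=-\infty$, the same covering argument run with an arbitrarily negative target in place of $c+2\varepsilon$ would force $m(h)=-\infty$, contradicting $m(h)\in\mathbb{R}$; so $c$ is automatically finite.)

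I expect the genuine difficulty to lie entirely in that last ``$\leq$'' step: one must upgrade the pointwise decomposition of $h$ into Dirac functions to a \emph{finite} domination by continuous functions whose $m$-values are uniformly close to $c$, and the four ingredients making this work --- the standard continuous approximation, the convergence $m(f^x_n)\downarrow F(h(x)+\varepsilon,x)$, the continuity of $h$, and the compactness of $X$ --- all have to be used together.
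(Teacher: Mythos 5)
Your proposal is correct. Items (1), (2) and the ``$\geq$'' half of item (3) follow the paper's proof essentially verbatim (the paper phrases the u.s.c.\ argument by comparing two standard approximations, $f'_n\leq f_{n_0}$, rather than invoking order preservation of $\tilde m$ on $g^a_x\leq f_{n_0}$ directly, but this is the same computation). Where you genuinely diverge is the ``$\leq$'' half of (3). The paper fixes a uniform index: by uniform continuity of $h$ it dominates $h$ by $\bigoplus_{z_j}f_{n_i}^{z_j}+\varepsilon$ over a finite $\frac{1}{n_i}$-net, which reduces the problem to bounding $\bigoplus_{x\in X}m(f_{n_i}^x)$; controlling that supremum then requires choosing near-maximizers $x_n$, extracting an accumulation point $\tilde x$, and a somewhat delicate comparison $f_{n_i}^{x_{n_i}}\leq f_k^{\tilde x}+\varepsilon$ before letting $k\to\infty$. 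You instead lift the Dirac heights to $h(x)+\varepsilon$ so that each approximation eventually dominates $h$ \emph{locally}, choose a point-dependent index $n(x)$ at which $m(f^x_{n(x)})$ is already below $c+2\varepsilon$, and finish with a finite subcover; this trades uniform continuity plus the accumulation-point argument for an $\varepsilon$-lift plus pointwise index selection, and it avoids having to control the full supremum $\bigoplus_{x\in X}m(f_n^x)$ at a single $n$. Your version is shorter and, in my view, cleaner; the paper's has the minor advantage of producing an explicit point $\tilde x$ realizing the value of $m(h)$ up to $3\varepsilon$. Your parenthetical treatment of the case $c=-\infty$ is needed and is handled correctly.
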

\begin{proof}

(1) If $a> a'$ then $g_x^a \geq g_x^{a'}$. So $F(a,x)= \tilde{m}(g_x^a) \geq \tilde{m}(g_x^{a'}) = F(a',x)$.
We now prove that the correspondence $x \to F(a,x)$ is u.s.c. Fix $x_0 \in X$ and take any $c >  F(a,x_0)$.
Recall that $F(a,x_{0})=\tilde{m}(g^{a}_{x_{0}})= \lim_{n \to \infty} m(f_{n})< c$ for $(f_{n})$ the standard continuous approximation of $g^{a}_{x_0}$. So there is $N_{c}$ such that for all $n_{0}> N_{c}$ we have,
$m(f_{n_{0}})< c$. Consider  $x \in U=B_{\frac{1}{n_{0}}}(x_{0})$ and choose $n$ big enough so that  $f'_{n} \leq f_{n_{0}}$ where
$(f'_{n})$ is the standard continuous approximation of $g^{a}_{x}$.

Under this hypothesis we have $m(f'_{n}) \leq  m(f_{n_{0}})$ and taking the limit on the left side we get $\lim_{n \to \infty}m(f'_{n}) \leq  m(f_{n_{0}})$, or equivalently
$F(a,x)  \leq  m(f_{n_{0}})<c.$
This proves that the correspondence $x \to F(a,x)$ is u.s.c.

(2)  $F(a,x)= \tilde{m}(g_x^a) = \tilde{m}(a\odot g_x^{0}) =a \odot \tilde{m}(g_x^{0})=a\odot F(0,x)$.

(3) By applying (2) we get the second equality and then we just need to prove that  $$m(h)=\bigoplus_{x \in X} F(h(x),x)$$
for all $h \in C(X, \mathbb{R})$.   As $g_x^{h(x)}(y) \leq h(y),\, \forall\, x,y\in X$, we obtain $\tilde{m}(g_x^{h(x)})\leq \tilde{m}(h) = m(h)\,\forall\,x\in X$. Then
\[ \bigoplus_{x \in X} F(h(x),x)  = \bigoplus_{x \in X} \tilde{m}(g_x^{h(x)}) \leq \bigoplus_{x \in X} m(h) = m(h). \]

To prove the opposite inequality, we consider any $\varepsilon>0$. Denote by $f_n^x, n\geq 0$ the standard approximation of $g_x^{h(x)}$ as given in \eqref{stand}. For each $n\geq 0$, let $x_n\in X$ be such that  $\displaystyle{\oplus_{x\in X}m(f_n^x) < m(f_n^{x_n})+\varepsilon}$.
As $X$ is compact, there exists a point $\tilde{x}$ and a subsequence $(x_{n_i})$ such that $x_{n_i}\to\tilde{x}$. As $h$ is continuous, there exists $k_0$ such that $h(x)<h(\tilde{x})+\varepsilon$ for any $x\in B(\tilde{x},\frac{1}{k_0}).$ For each natural $k\geq k_0$ there exists $N>k$ such that $B(x_{n_i},\frac{2}{n_i})\subset  B(\tilde{x},\frac{1}{k})$ for any $n_i\geq N$. As $h(x_{n_i})<h(\tilde{x})+\varepsilon$ and $B(x_{n_i},\frac{2}{n_i})\subset B(\tilde{x},\frac{1}{k})$, it follows from  definition \eqref{stand} that $f_{n_i}^{x_{n_i}} \leq f_{k}^{\tilde{x}}+\varepsilon $ for $n_i\geq N$ and consequently,  for $n_i\geq N $ we obtain
\begin{equation}\label{iq1}
	\oplus_{x\in X}m(f_{n_i}^x) < m(f_{n_i}^{x_{n_i}})+\varepsilon< m(f_{k}^{\tilde{x}})+2\varepsilon.
\end{equation} 		
	
As $h$ is continuous and $X$ is compact, the function $h$ is uniformly continuous. Then there exists $\delta$ such that $d(x,y)<\delta \Rightarrow |h(y)-h(x)|<\varepsilon$. Consider a finite cover of $X$ by balls of radius $\frac{1}{n_i}< \delta$, with $X\subset B(z_1,\frac{1}{n_i}) \cup ...\cup B(z_{l_i},\frac{1}{n_i})$. If $y\in B(z_j,\frac{1}{n_i})$ then $h(y) <h(z_j)+\varepsilon$. Furthermore, as from definition \eqref{stand}, $f_{n_i}^{h(z_j)}(x)= h(z_j)$ for any $x\in B(z_j,\frac{1}{n_i})$ we get,
$h \leq \oplus_{z_j}f_{n_i}^{z_j} +\varepsilon.$
Finally, applying \eqref{iq1} we have,
\[m(h) \leq m\left(\oplus_{z_j}f_{n_i}^{z_j} +\varepsilon\right) = \oplus_{z_j}m(f_{n_i}^{z_j})+\varepsilon \leq \oplus_{x\in X}m(f_{n_i}^x)+\varepsilon \leq m(f_{k}^{\tilde{x}})+3\varepsilon.\]
Making $k\to\infty$ we get,
\[m(h) \leq \tilde{m}(g_{\tilde{x}}^{h(\tilde{x})}) +3\varepsilon =F(h(\tilde{x}),\tilde{x}) +3\varepsilon \leq \bigoplus_{x\in X}F(h(x),x) +3\varepsilon.\]
As $\varepsilon$ is arbitrary we conclude the proof.
\end{proof}

The next theorem corresponds to Theorem 1 of  \cite{KM89} in the present setting. It contains the claim in Theorem \ref{teo : densidade Maslov}.

\begin{theorem}\label{thm: charact idempotent measures}
  For each $\lambda \in  U(X, \mathbb{R}_{\max})$ consider the functional $m_{\lambda}: C(X,\mathbb{R}) \to \mathbb{R}_{\max}$ defined by
  $$m_{\lambda}(h):=\bigoplus_{x \in X} \lambda(x) \odot h(x),$$
  for any $h \in C(X,\mathbb{R})$. Then,
  \begin{enumerate}
    \item The map $\gamma:U(X, \mathbb{R}_{\max}) \to C^{*}(X,\mathbb{R})$ defined by $\gamma(\lambda)= m_{\lambda}$ is a max-plus isomorphism between $U(X, \mathbb{R}_{\max})$ and $C^{*}(X,\mathbb{R})$;
    \item The function $\lambda \in U(X, \mathbb{R}_{\max})$ is always bounded from above by $m{_\lambda}(0)$ and $m_\lambda \in I(X)$ is equivalent to $\max_{x \in X} \lambda(x)=0$.
      \end{enumerate}
\end{theorem}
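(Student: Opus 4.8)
The plan is to build on Lemma~\ref{lem: represent delta dirac property}, which already supplies the representation $m(h)=\bigoplus_{x\in X}F(0,x)\odot h(x)$ with $F(0,\cdot)$ upper semicontinuous, and to add three missing pieces: well-definedness of $\gamma$, injectivity of $\gamma$ (that is, uniqueness of the density), and the elementary content of part (2).

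First I would verify that $\gamma$ is well defined and a max-plus morphism. For $\lambda\in U(X,\mathbb{R}_{\max})$ the value $m_\lambda(h)=\bigoplus_{x\in X}\lambda(x)\odot h(x)$ is a real number: it is bounded above because $\lambda$ is u.s.c.\ on the compact space $X$ (hence bounded above) and $h$ is bounded, and bounded below by $\lambda(x_0)\odot h(x_0)\in\mathbb{R}$ for any $x_0\in\supp(\lambda)$, which is nonempty by hypothesis. Max-plus homogeneity and additivity of $m_\lambda$ follow at once from the facts that $\sup$ commutes with the addition of a constant and with a finite $\max$; the same computations give $m_{a\odot\lambda}=a\odot m_\lambda$ and $m_{\lambda\oplus\lambda'}=m_\lambda\oplus m_{\lambda'}$ (note that $U(X,\mathbb{R}_{\max})$ is itself closed under $\oplus$ and $\odot$, since a finite $\max$ of u.s.c.\ functions is u.s.c.\ and supports only grow). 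Surjectivity is then immediate: given $m\in C^*(X,\mathbb{R})$, set $\lambda(x):=F(0,x)=\tilde m(g^{0}_{x})$ with $\tilde m$ the extension from Lemma~\ref{maximal}; by Lemma~\ref{lem: represent delta dirac property}(1) this $\lambda$ is u.s.c., its support is nonempty because (applying Lemma~\ref{lem: represent delta dirac property}(3) with $h=0$) $\bigoplus_{x\in X}\lambda(x)=m(0)\in\mathbb{R}$, and Lemma~\ref{lem: represent delta dirac property}(3) says precisely $m=m_\lambda$.

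The heart of the argument is injectivity, which I would obtain from the identity $\widetilde{m_\lambda}(g^{0}_{x})=\lambda(x)$ for every $\lambda\in U(X,\mathbb{R}_{\max})$ and $x\in X$; since the extension of Lemma~\ref{maximal} is uniquely determined by the functional on $C(X,\mathbb{R})$, this identity lets one recover $\lambda$ from $m_\lambda$, so $m_{\lambda_1}=m_{\lambda_2}$ forces $\lambda_1=\lambda_2$, and $\gamma$ is a bijective max-plus morphism, hence an isomorphism (its inverse is automatically a morphism). To prove the identity I would take the standard continuous approximation $(f_n)$ of $g^{0}_{x}$ from \eqref{stand}, so $f_n\downarrow g^{0}_{x}$, $f_n\le 0$, $f_n\equiv 0$ on $B(x,1/n)$ and $f_n\equiv-n$ off $B(x,2/n)$. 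Since $\lambda+f_n$ is u.s.c.\ on the compact $X$, the supremum $m_\lambda(f_n)$ is attained at some $y_n$. When $\lambda(x)>-\infty$ one gets $m_\lambda(f_n)\ge\lambda(x)\odot f_n(x)=\lambda(x)$, which rules out $d(y_n,x)>2/n$ for large $n$ (that would force $m_\lambda(f_n)\le\sup\lambda-n\to-\infty$); hence $y_n\to x$, and from $m_\lambda(f_n)=\lambda(y_n)\odot f_n(y_n)\le\lambda(y_n)$ together with upper semicontinuity of $\lambda$ at $x$ we obtain $\limsup_n m_\lambda(f_n)\le\limsup_n\lambda(y_n)\le\lambda(x)$, so $\widetilde{m_\lambda}(g^{0}_{x})=\lim_n m_\lambda(f_n)=\lambda(x)$. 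The case $\lambda(x)=-\infty$ is handled by the same escape/u.s.c.\ dichotomy, which pins the (monotone) limit to $-\infty$.

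Finally, part (2) is short: $m_\lambda(0)=\bigoplus_{x\in X}\lambda(x)\odot 0=\bigoplus_{x\in X}\lambda(x)$, and since $X$ is compact and $\lambda$ u.s.c.\ this supremum is attained, so $\lambda(x)\le\max_{x\in X}\lambda(x)=m_\lambda(0)$ for every $x$; and $m_\lambda\in I(X)$ means precisely $m_\lambda(0)=0$ by Definition~\ref{def: Idempotent probability as a functional}, which is therefore equivalent to $\max_{x\in X}\lambda(x)=0$. The main obstacle is the injectivity identity $\widetilde{m_\lambda}(g^{0}_{x})=\lambda(x)$: it is the only place where compactness and upper semicontinuity are genuinely exploited, and the delicate point is to exclude that the near-maximizers $y_n$ run off to infinity in the max-plus sense before the u.s.c.\ bound can be applied.
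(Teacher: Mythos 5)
Your proposal is correct and follows essentially the same route as the paper: surjectivity via Lemma~\ref{lem: represent delta dirac property}, injectivity by showing $\widetilde{m_\lambda}(g^{0}_{x})=\lambda(x)$ through the standard continuous approximation, and the elementary computation for part (2). In fact your treatment of the injectivity step (locating near-maximizers $y_n$, excluding their escape, and invoking upper semicontinuity) supplies the details that the paper compresses into the single assertion $\lambda(x_0)=\lim_n\bigoplus_{x\in X}\lambda(x)\odot f_n(x)$.
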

\begin{proof}
(1)
Given $\lambda \in U(X, \mathbb{R}_{\max}) $ let us prove that $m_\lambda \in C^{*}(X,\mathbb{R})$. We have
\[m_\lambda(a\odot f) = \bigoplus_{x \in X} \lambda(x) \odot a\odot f(x) = a\odot \bigoplus_{x \in X} \lambda(x) \odot f(x) = a\odot m_\lambda( f)\]
and
\[m_\lambda(f\oplus g) = \bigoplus_{x \in X} \lambda(x) \odot (f(x)\oplus g(x)) =\] \[=\left( \bigoplus_{x \in X} \lambda(x) \odot f(x)\right)\oplus \left( \bigoplus_{x \in X} \lambda(x) \odot g(x)\right)=  m_\lambda( f)\oplus m_\lambda(g).\]

We claim that $\gamma$ is surjective. From Lemma~\ref{lem: represent delta dirac property} we know that for any $m\in C^{*}(X,\mathbb{R}) $ we have
$$m(h) = \bigoplus_{x \in X} F(h(x),x )= \bigoplus_{x \in X} F(0,x) \odot h(x).$$
Defining $\lambda(x):= F(0,x)$, which is u.s.c. from Lemma~\ref{lem: represent delta dirac property},  we obtain $m=m_{\lambda}$.

We claim that $\gamma$ is injective. Given $\lambda \in U(X, \mathbb{R}_{\max})$ and $m=m_\lambda$, let $F(0,x)$ as given above. We will prove that $\lambda(x) = F(0,x)$. Indeed, for a fixed $x_0$, consider the standard continuous approximation $(f_n)$ of $g^{0}_{x_0}$. As $\lambda $ is u.s.c. we have
\[\lambda(x_0) = \lim_{n\to\infty} \bigoplus_{x\in X} \lambda(x)\odot f_n(x) \]
Then
\[\lambda(x_0) = \lim_{n\to\infty} \bigoplus_{x\in X} \lambda(x)\odot f_n(x) = \lim_{n\to\infty} m_\lambda( f_n) =\tilde{m}(g^{0}_{x_0}) = F(0,x_0).\]

We need also to show that $\gamma$ is max-plus linear. To see that take,
$$\gamma(a \odot \lambda)(h)= \bigoplus_{x \in X} a \odot \lambda(x) \odot h(x)= a \odot \bigoplus_{x \in X}\lambda(x) \odot h(x)= a \odot \gamma(\lambda)(h),$$
and
$$\gamma(\lambda \oplus \lambda')(h)= \bigoplus_{x \in X} (\lambda(x) \oplus \lambda'(x)) \odot h(x)= $$ $$=\left(\bigoplus_{x \in X} \lambda(x) \odot h(x) \right) \oplus \left( \bigoplus_{x \in X} \lambda'(x) \odot h(x)\right)=\gamma(\lambda )(h) \oplus  \gamma(\lambda')(f),$$
for all $h \in C(X, \mathbb{R})$.

(2) We notice that $m_\lambda(0) = \bigoplus_{x\in X} \lambda(x)\odot 0(x) = \bigoplus_{x\in X} \lambda(x)$, thus  $\lambda$ is always bounded by $m_\lambda(0)$.
Furthermore $\bigoplus_{x \in X} \lambda(x)=0$ if and only if $m_{\lambda}(0) = 0$.

\end{proof}

\begin{remark}
	The formula $$m_{\lambda}(h):=\bigoplus_{x \in X} \lambda(x) \odot h(x),$$
	for any $h \in C(X,\mathbb{R})$, in Theorem~\ref{thm: charact idempotent measures} can be slightly improved by recalling that, for each $x \in X$ one can define the Dirac delta measure $\delta_{x}: C(X,\mathbb{R}) \to \mathbb{R}$ by the formula $\delta_{x}(h):= h(x).$  It is obvious that $\delta_{x} \in C^*(X,\mathbb{R})$ (the density of $\delta_{x}$ is $g^{0}_{x}$). Thus, we can see the previous formula as a kind of Choquet's theorem (see \cite[Theorem Choquet, Pg. 14]{Ph01}), where any idempotent measure is a max-plus combination of Dirac delta measures $\delta_{x}$ with coefficients $\lambda(x)$, that is, $m_{\lambda}(h)=\bigoplus_{x \in X} \lambda(x) \odot \delta_{x}(h),$ for any $f \in C(X,\mathbb{R})$ or,
	$$m_{\lambda}=\bigoplus_{x \in X} \lambda(x) \odot \delta_{x}.$$
\end{remark}

\subsection{Upper semicontinuous envelope}

\begin{definition}\label{def:bounded functions rho}
	Let $\mathcal{B}(X, \mathbb{R}_{\max})$ be the set of bounded functions in the sense of $\rho$, that is, $f: X \to \mathbb{R}_{\max}$ is bounded if there exists $K>0$ such that $d_{\rho}(f, -\infty)=\max_{x \in X}\rho(f(x), -\infty) \leq K$.
\end{definition}
We notice that $f$ is bounded with respect to $\rho$ if, and only if, it is bounded from above in the usual sense because
$\rho(f(x), -\infty) \leq K \Leftrightarrow \exp(f(x)) \leq K  \Leftrightarrow f(x) \leq \ln(K).$ As $X$ is compact any u.s.c. function (see Definition~\ref{def:usc}) is in  $\mathcal{B}(X, \mathbb{R}_{\max})$.

\begin{definition}\label{def: upper semicontinuous envelope}
	Given a function $\lambda \in \mathcal{B}(X, \mathbb{R}_{\max})$ we define the upper semicontinuous envelope of $\lambda$  as
	$$\lambda^{u.s.e.}(x)=\inf_{\varphi \in C(X, \mathbb{R}), \; \varphi \geq \lambda} \varphi(x), \; \forall x \in X.$$
\end{definition}
In \cite{KM89} is used the lower semicontinuous envelope. The main properties of the upper semicontinuous envelope are given in the next lemma.
\begin{proposition}\label{prop: upper semicontinuous envelope} By considering Definition \ref{def: upper semicontinuous envelope},
	\begin{enumerate}
		\item If $\lambda \in \mathcal{B}(X, \mathbb{R}_{\max})$ and $\lambda \neq -\infty$ then $\lambda^{u.s.e.}  \in U(X, \mathbb{R}_{\max})$;
		\item If $\lambda \in U(X, \mathbb{R}_{\max})$ then $\lambda^{u.s.e.}=\lambda$.
	\end{enumerate}
\end{proposition}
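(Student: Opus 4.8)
The plan is to prove the two items separately; item (1) is essentially point-set topology, while item (2) requires exhibiting an explicit family of continuous functions approximating $\lambda$ from above.

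For item (1), first note that the family of competitors is nonempty: since $\lambda\in\mathcal{B}(X,\mathbb{R}_{\max})$ there is a real $M$ with $\lambda\le M$, and the constant function $M$ is continuous with $M\ge\lambda$; consequently $\lambda\le\lambda^{u.s.e.}\le M$ pointwise, so $\lambda^{u.s.e.}$ indeed takes values in $\mathbb{R}_{\max}$. That $\lambda^{u.s.e.}$ is u.s.c. is the standard fact that a pointwise infimum of continuous functions is u.s.c.: given $x_0$ and a real $c>\lambda^{u.s.e.}(x_0)$, pick a competitor $\varphi$ with $\varphi(x_0)<c$; by continuity $\varphi<c$ on a neighborhood $U$ of $x_0$, hence $\lambda^{u.s.e.}\le\varphi<c$ on $U$, giving $\limsup_{x\to x_0}\lambda^{u.s.e.}(x)\le c$. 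Finally, $\lambda\neq-\infty$ provides $x_0$ with $\lambda(x_0)>-\infty$, and then $\lambda^{u.s.e.}(x_0)\ge\lambda(x_0)>-\infty$, so $\supp(\lambda^{u.s.e.})\neq\varnothing$; altogether $\lambda^{u.s.e.}\in U(X,\mathbb{R}_{\max})$.

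For item (2), the inequality $\lambda\le\lambda^{u.s.e.}$ is immediate from the definition, so the task is the reverse inequality. Fix $M\ge 0$ with $\lambda\le M$ (possible after enlarging $M$) and, for $n\in\mathbb{N}$, set
\[
\varphi_n(x):=\sup_{y\in X}\bigl[\max(\lambda(y),-n)-n\,d(x,y)\bigr].
\]
By the triangle inequality each $\varphi_n$ is $n$-Lipschitz, hence continuous; taking $y=x$ gives $\varphi_n\ge\max(\lambda,-n)\ge\lambda$, and since $\max(\lambda(y),-n)\le M$ and $d\ge 0$ one has $-n\le\varphi_n\le M$, so $\varphi_n$ is a legitimate competitor in the infimum defining $\lambda^{u.s.e.}$. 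It remains to show $\varphi_n(x_0)\to\lambda(x_0)$ for each $x_0$. Given a real $c>\lambda(x_0)$, upper semicontinuity of $\lambda$ at $x_0$ furnishes $\delta>0$ with $\lambda(y)<c$ whenever $d(y,x_0)<\delta$. For $n>\max(-c,(M-c)/\delta)$ one then checks that the bracket in the definition of $\varphi_n(x_0)$ is $<c$ for $y$ with $d(y,x_0)<\delta$ (because $\max(\lambda(y),-n)<c$ and the subtracted term is nonnegative) and is $\le M-n\delta<c$ for $y$ with $d(y,x_0)\ge\delta$; hence $\varphi_n(x_0)\le c$. Therefore $\lambda^{u.s.e.}(x_0)\le\varphi_n(x_0)\le c$ for all large $n$, and letting $c$ decrease to $\lambda(x_0)$ (or to $-\infty$ when $\lambda(x_0)=-\infty$) gives $\lambda^{u.s.e.}(x_0)\le\lambda(x_0)$, as required.

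The only step with any real content is the construction of the Lipschitz approximants $\varphi_n$ in item (2) and the verification of their pointwise convergence; the truncation $\max(\lambda,-n)$ inside the sup-convolution is precisely what makes the argument go through uniformly despite the possible value $\lambda(x_0)=-\infty$. Item (1) and the remaining bookkeeping are routine.
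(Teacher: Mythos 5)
Your proof is correct. Item (1) is essentially the paper's argument: both of you use that a pointwise infimum of a nonempty family of continuous majorants is u.s.c.\ (the paper phrases it with sequences and an auxiliary $\varepsilon$, you with neighborhoods and a sublevel set) and that $\lambda\le\lambda^{u.s.e.}$ forces $\supp(\lambda^{u.s.e.})\neq\varnothing$. For item (2), however, your construction of the continuous majorants is genuinely different. The paper works locally: for each fixed $x_0$ and each real $c>\lambda(x_0)$ it builds an ad hoc piecewise-linear ``tent'' $\varphi_c$ that equals $c$ at $x_0$, climbs linearly to $1+\max_X\lambda$ on a small ball $B_{1/n}(x_0)$ chosen by upper semicontinuity, and is constant outside, concluding $\lambda^{u.s.e.}(x_0)\le\varphi_c(x_0)=c$. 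You instead produce a single global sequence of $n$-Lipschitz sup-convolutions $\varphi_n(x)=\sup_{y}\bigl[\max(\lambda(y),-n)-n\,d(x,y)\bigr]$ and verify $\varphi_n(x_0)\le c$ for large $n$ by splitting the supremum at distance $\delta$; the two verifications are structurally parallel (both hinge on the same $\delta$ from upper semicontinuity and on the global upper bound $M$), and your truncation by $-n$ correctly keeps $\varphi_n$ real-valued with no case analysis on where $\lambda=-\infty$. Your route buys a little more than is asked: the $\varphi_n$ form a nonincreasing sequence of continuous functions converging pointwise to $\lambda$ on all of $X$, which in particular shows $U(X,\mathbb{R}_{\max})\subset\bar{C}(X,\mathbb{R})$ in the sense of Lemma~\ref{maximal}, whereas the paper's tent is a one-off local gadget. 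Both arguments are complete and valid.
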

\begin{proof}
	(1) First, we notice that $\lambda \neq -\infty$ means that there exists $x_0 \in X$ such that $-\infty <\lambda(x_0)\leq \lambda^{u.s.e.}(x_0)$ thus, $\supp(\lambda^{u.s.e.}) \neq \varnothing$. We claim that $\lambda^{u.s.e.}$ is u.s.c. Indeed, given $x\in X$ and $\varepsilon>0$, let $(x_n)$ be any sequence such that $x_{n}\to x$. We fix any function $\varphi_0 \in C(X, \mathbb{R})$ such that $\varphi_0 \geq \lambda$. Then we have
	$$\lambda^{u.s.e.}(x_n)=\inf_{\varphi \in C(X, \mathbb{R}), \; \varphi \geq \lambda} \varphi(x_n) \leq \varphi_0(x_n)  \leq  \varphi_0(x)+\varepsilon $$
	for $n$ big enough. Thus,
	$$\limsup_{n\to \infty}\lambda^{u.s.e.}(x_n) \leq \varphi_0(x)+\varepsilon.$$
	If we take the infimum over functions $\varphi_0$ at the right hand side of this inequality, we get
	$$\limsup_{n\to \infty}\lambda^{u.s.e.}(x_n) \leq  \lambda^{u.s.e.}(x)+\varepsilon.$$
	As $\varepsilon$ is arbitrary we conclude that $\lambda^{u.s.e.}$ is u.s.c.. As $\supp( \lambda^{u.s.e.}) \neq \varnothing$ we have proved that $\lambda^{u.s.e.}  \in U(X, \mathbb{R}_{\max})$.\\
	(2) As $X$ is compact and $\lambda$ is u.s.c. we know that $\lambda$ attain its maximum value which we will denote by $M$. Let $x_0$ be a point of $X$. As $\lambda$ is u.s.c. at $x_0$, for any real $c$ satisfying $1+M>c> \lambda \left(x_0\right)$, there exists  $n\in\mathbb{N}$ such that $\lambda(x)< c$ for all $x \in B_{\frac{1}{n}}(x_0)$.
	Consider the continuous function $\varphi_{c}$ defined by
	\begin{equation}\label{varphi} \varphi_{c}(y):=
		\left\{
		\begin{array}{ll}
			1+M, & d(y,x_0)>1/n \\
			c+n(1+M-c)d(y,x_0) , &  d(y,x_0)\leq  1/n
		\end{array}
		\right. ,
	\end{equation}
	and observe that $\varphi_{c} \geq \lambda$. Thus, $\lambda(x_0) \leq \lambda^{u.s.e.} (x_0) \leq \varphi_c(x_0) =c.$ As $c>\lambda(x_0)$ is arbitrary we conclude that $\lambda^{u.s.e.}(x_0)=\lambda(x_{0})$.
\end{proof}

\begin{proposition}\label{prop: same envelope}
	For each $\lambda \in \mathcal{B}(X, \mathbb{R}_{\max}) $ consider the functional $m_{\lambda}: C(X,\mathbb{R}) \to \mathbb{R}_{\max}$ defined by
	$$m_{\lambda}(h):=\bigoplus_{x \in X} \lambda(x) \odot h(x),$$
	for any $h \in C(X,\mathbb{R})$. If $\lambda_{1}, \lambda_{2} \in \mathcal{B}(X, \mathbb{R}_{\max})$ and $m_{\lambda_{1}}= m_{\lambda_{2}},$ then $\lambda_{1}^{u.s.e.}= \lambda_{2}^{u.s.e.}$.
	\end{proposition}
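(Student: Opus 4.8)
The plan is to reduce the statement to the single observation that $m_\lambda$ depends on $\lambda\in\mathcal{B}(X,\mathbb{R}_{\max})$ only through its upper semicontinuous envelope, i.e.\ that $m_\lambda=m_{\lambda^{u.s.e.}}$. Granting this, the proposition is immediate: from $m_{\lambda_1}=m_{\lambda_2}$ we get $m_{\lambda_1^{u.s.e.}}=m_{\lambda_1}=m_{\lambda_2}=m_{\lambda_2^{u.s.e.}}$, and (assuming neither $\lambda_i$ is identically $-\infty$, the remaining case being trivial as explained below) both $\lambda_i^{u.s.e.}$ lie in $U(X,\mathbb{R}_{\max})$ by Proposition~\ref{prop: upper semicontinuous envelope}~(1); the injectivity of the map $\gamma\colon\lambda\mapsto m_\lambda$ on $U(X,\mathbb{R}_{\max})$ established in Theorem~\ref{thm: charact idempotent measures}~(1) then forces $\lambda_1^{u.s.e.}=\lambda_2^{u.s.e.}$.

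It remains to prove $m_\lambda=m_{\lambda^{u.s.e.}}$ for $\lambda\in\mathcal{B}(X,\mathbb{R}_{\max})$ with $\lambda\neq-\infty$. One inequality is free: since $\lambda\leq\lambda^{u.s.e.}$ pointwise, for every $h\in C(X,\mathbb{R})$ we have $\lambda(x)\odot h(x)\leq\lambda^{u.s.e.}(x)\odot h(x)$, and applying $\bigoplus_{x\in X}$ gives $m_\lambda(h)\leq m_{\lambda^{u.s.e.}}(h)$. For the reverse inequality, fix $h\in C(X,\mathbb{R})$ and set $c:=m_\lambda(h)=\bigoplus_{x\in X}\lambda(x)\odot h(x)$, which is a real number because $\lambda$ is bounded from above and not identically $-\infty$ while $h$ is continuous on the compact space $X$. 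By definition of $c$ we have $\lambda(x)\odot h(x)\leq c$, that is $\lambda(x)\leq c-h(x)$, for all $x\in X$. The function $x\mapsto c-h(x)$ is continuous and dominates $\lambda$, hence it is one of the competitors in the infimum defining $\lambda^{u.s.e.}$ (Definition~\ref{def: upper semicontinuous envelope}); therefore $\lambda^{u.s.e.}(x)\leq c-h(x)$, i.e.\ $\lambda^{u.s.e.}(x)\odot h(x)\leq c$, for every $x$. Applying $\bigoplus_{x\in X}$ yields $m_{\lambda^{u.s.e.}}(h)\leq c=m_\lambda(h)$, as desired.

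There is no serious obstacle here; the only points needing (minimal) care are the degenerate case $\lambda\equiv-\infty$ — where $m_\lambda\equiv-\infty$, so testing $m_{\lambda_1}=m_{\lambda_2}$ on $h=0$ forces $\lambda_2\equiv-\infty$ as well, and both envelopes equal $-\infty$ — and the observation, which is really the crux of the whole argument, that the continuous function $c-h$ is an admissible test function in Definition~\ref{def: upper semicontinuous envelope}. Once that is seen, the conclusion is a one-line consequence of the injectivity asserted in Theorem~\ref{thm: charact idempotent measures}.
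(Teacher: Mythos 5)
Your proof is correct. It does, however, organize the argument differently from the paper. The paper's proof is self-contained and very short: from $m_{\lambda_1}=m_{\lambda_2}$, testing against $h=-\varphi$ shows that a continuous $\varphi$ dominates $\lambda_1$ if and only if it dominates $\lambda_2$, so the two infima in Definition~\ref{def: upper semicontinuous envelope} range over the same set of functions and the envelopes coincide pointwise. You instead isolate the intermediate fact $m_\lambda=m_{\lambda^{u.s.e.}}$ (a clean statement worth having on its own, closely related to Remark~\ref{rem: maximal}) and then conclude via the injectivity of $\gamma$ on $U(X,\mathbb{R}_{\max})$ from Theorem~\ref{thm: charact idempotent measures}. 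The key computational trick is the same in both proofs --- your competitor $c-h$ plays exactly the role of the paper's test function $-\varphi$ --- but your route imports the Dirac-approximation machinery behind the injectivity statement, whereas the paper's argument needs nothing beyond the definition of the envelope. What you gain is the reusable identity $m_\lambda=m_{\lambda^{u.s.e.}}$; what the paper gains is economy. Your handling of the degenerate case $\lambda\equiv-\infty$ (which is excluded from $U(X,\mathbb{R}_{\max})$, so the injectivity theorem cannot be applied there) is the right extra care to take, and is correctly resolved by testing $h=0$.
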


\begin{proof} Since $m_{\lambda_{1}}= m_{\lambda_{2}}$ we have
	$$\bigoplus_{x \in X} \lambda_{1}(x) \odot h(x) = \bigoplus_{x \in X} \lambda_{2}(x) \odot h(x),$$
	for any $h \in C(X,\mathbb{R})$. Given $\varphi \in C(X, \mathbb{R})$ we have $\varphi \geq \lambda_{1}$ if and only if $\varphi \geq \lambda_{2}$. Indeed, suppose $\varphi\geq \lambda_1$ and take $h=-\varphi$. Then we get
	$$0 \geq \bigoplus_{x \in X} \lambda_{1}(x) -\varphi(x) = \bigoplus_{x \in X} \lambda_{2}(x) -\varphi(x),$$
	meaning that $\varphi \geq \lambda_{2}$. The reverse argument is analogous.\\
	Finally, fixed $x_0 \in X$, we have
	$$\lambda_{1}^{u.s.e.}(x_0)=\inf_{\varphi \in C(X, \mathbb{R}), \; \varphi \geq \lambda_{1}} \varphi(x_0) =\inf_{\varphi \in C(X, \mathbb{R}), \; \varphi \geq \lambda_{2}} \varphi(x_0) = \lambda_{2}^{u.s.e.}(x_0) .$$
\end{proof}

\subsection{Maximal extension of idempotent measures}

In this section we present a brief discussion of how the domain of an idempotent measure can be extended from $C(X,\mathbb{R})$ to $\mathcal{B}(X, \mathbb{R}_{\max})$.

\begin{proposition}
	Let $\bar{C}(X,\mathbb{R})$ and $\tilde{m}$ as defined in Lemma \ref{maximal} where $m=m_\lambda$. If $g \in \bar{C}(X,\mathbb{R})$ then
	\[\tilde{m}(g) = \bigoplus_{x\in X} \lambda(x)\odot g(x).\]	
\end{proposition}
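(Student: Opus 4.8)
The plan is to fix a nonincreasing sequence $(f_n)$ of continuous functions converging pointwise to $g$ — such a sequence exists by the very definition of $\bar C(X,\mathbb R)$ in Lemma~\ref{maximal} — and to squeeze $\tilde m(g)$ between the two sides of the claimed identity through this sequence. Write $L := \bigoplus_{x\in X}\lambda(x)\odot g(x)$. Since $(f_n)$ is nonincreasing and converges pointwise to $g$, we have $g\le f_n$ for every $n$, hence $\lambda(x)\odot g(x)\le\lambda(x)\odot f_n(x)$ for all $x$, and taking the supremum over $x$ gives $L\le m_\lambda(f_n)=m(f_n)$. Letting $n\to\infty$ and using that $\tilde m(g)=\lim_n m(f_n)$ (well defined and independent of the chosen sequence, by Lemma~\ref{maximal}) yields the easy inequality $L\le\tilde m(g)$.

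For the reverse inequality $\tilde m(g)\le L$, first observe that if $\tilde m(g)=-\infty$ then $L=-\infty$ as well by what was just proved, so there is nothing to do; I would therefore assume $\tilde m(g)>-\infty$. Since $(m(f_n))_n$ is nonincreasing with limit $\tilde m(g)$, every $m(f_n)$ is then a finite real number. For each $n$, the function $x\mapsto\lambda(x)\odot f_n(x)=\lambda(x)+f_n(x)$ is u.s.c. on the compact space $X$ and not identically $-\infty$ (as $\supp(\lambda)\neq\varnothing$), hence attains its supremum at some $x_n\in X$; moreover $\lambda(x_n)\in\mathbb R$ because $\lambda(x_n)+f_n(x_n)=m(f_n)\in\mathbb R$. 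By compactness I would pass to a subsequence with $x_{n_i}\to\tilde x$.

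Now fix $m\in\mathbb N$. For $n_i\ge m$, monotonicity gives $f_{n_i}\le f_m$ pointwise, so
\[m(f_{n_i})=\lambda(x_{n_i})+f_{n_i}(x_{n_i})\le\lambda(x_{n_i})+f_m(x_{n_i}).\]
Taking $\limsup_{i\to\infty}$, the left side tends to $\tilde m(g)$, while on the right side $f_m(x_{n_i})\to f_m(\tilde x)$ by continuity and $\limsup_i\lambda(x_{n_i})\le\lambda(\tilde x)$ since $\lambda$ is u.s.c.; hence $\tilde m(g)\le\lambda(\tilde x)+f_m(\tilde x)$ for every $m$. As the left side is finite, $\lambda(\tilde x)$ is finite, and letting $m\to\infty$ with $f_m(\tilde x)\downarrow g(\tilde x)$ gives $\tilde m(g)\le\lambda(\tilde x)\odot g(\tilde x)\le\bigoplus_{x\in X}\lambda(x)\odot g(x)=L$. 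Combining the two inequalities gives $\tilde m(g)=L$.

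The only delicate point is the bookkeeping with $-\infty$ in the max-plus arithmetic — in particular reducing first to the case $\tilde m(g)>-\infty$ so that all quantities in play are genuine reals — together with the interchange of the two limiting operations ($\limsup$ over the subsequence in $i$, then $m\to\infty$), which is exactly where upper semicontinuity of $\lambda$, continuity of the $f_m$, and the monotonicity of $(f_n)$ all enter.
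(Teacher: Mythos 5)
Your proof is correct and relies on the same essential ingredients as the paper's: the easy inequality from $g\le f_n$, and for the converse the combination of compactness (extracting a limit point of near-extremal/extremal points), the monotonicity bound $f_{n_i}\le f_m$, and upper semicontinuity of $\lambda$ at the limit point. The paper packages this as a proof by contradiction with a case split on whether the relevant values are $-\infty$, while your direct maximizer-and-$\limsup$ argument reaches the same conclusion with cleaner bookkeeping; the mathematical content is the same.
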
 	
\begin{proof}
Let $f_n$ be a nonincreasing sequence of continuous functions converging pointwise to $g$. Denote by
\[\tilde{m}(g) = \lim_{n\to \infty}  \bigoplus_{x\in X} \lambda(x)\odot f_n(x)\]
and
\[M_\lambda(g) = \bigoplus_{x\in X} \lambda(x)\odot g(x).\]
We want to prove that $\tilde{m}(g) = M_\lambda(g)$. As $f_n\geq g$ we have
\[\tilde{m}(g) = \lim_{n\to \infty}  \bigoplus_{x\in X} \lambda(x)\odot f_n(x) \geq \bigoplus_{x\in X} \lambda(x)\odot g(x)=M_\lambda(g).\]
In order to prove the equality suppose initially $M_\lambda(g)\neq -\infty$ and by contradiction suppose there exists an $\varepsilon>0$ and a sequence $(x_n)$ in $X$ such that  $\lambda(x_n)\odot f_n(x_n) > M_\lambda(g)+\varepsilon. $ We can suppose there exists $x_0\in X$ such that $x_n\to x_0$.

Case 1: supposing $\lambda(x_0)\odot g(x_0) \neq -\infty$. As $\lambda(x_0)\odot f_n(x_0)$ converges to $\lambda(x_0)\odot g(x_0)$  there exists $k$ such that $\lambda(x_0)\odot f_k(x_0)< \lambda(x_0)\odot g(x_0)+\varepsilon\leq M_{\lambda}(g)+\varepsilon$. As $\lambda+f_k$ is u.s.c. we get a $\delta>0$ such that
$$d(x,x_0)<\delta \Rightarrow \lambda(x)\odot f_k(x) < M_\lambda(g)+\varepsilon .$$ As $f_n$ is nonincreasing we get $$[d(x,x_0)<\delta, n\geq k] \Rightarrow \lambda(x)\odot f_n(x) < M_\lambda(g)+\varepsilon,$$
which is a contradiction because $d(x_n,x_0)<\delta$ for $n$ large enough.

Case 2: supposing $\lambda(x_0)\odot g(x_0) = -\infty$. Then, for any natural $N$ there exists $k$ such that $\lambda(x_0)\odot f_k(x_0)< -N$. As $\lambda+f_k$ is u.s.c. we get a $\delta>0$ such that $$d(x,x_0)<\delta \Rightarrow \lambda(x)\odot f_k(x) <-N.$$ As $f_n$ is nonincreasing we get $$[d(x,x_0)<\delta, n\geq k] \Rightarrow \lambda(x)\odot f_n(x) < -N,$$
which is a contradiction because $d(x_n,x_0)<\delta$ for $n$ large enough.

Now we suppose $M_\lambda(g)= -\infty$, which means $\lambda(x)+g(x) = -\infty,\;\forall \,x\in X$, and by contradiction suppose there exists a real number $L$ and a sequence $(x_n)$ in $X$ such that  $\lambda(x_n)\odot f_n(x_n) > L. $ We can suppose there exists $x_0\in X$ such that $x_n\to x_0$. Clearly $\lambda(x_0)\odot g(x_0) = -\infty$ and we get a contradiction arguing as in case 2 above.

\end{proof}

From Theorem~\ref{thm: charact idempotent measures} we have a formula for an idempotent measure $m_{\lambda}$, with density $\lambda$, which can be extended to $ h \in \mathcal{B}(X, \mathbb{R}_{\max})$ by setting up
$$\hat{m}_{\lambda}(h) = \bigoplus_{x \in X} \lambda(x) \odot h(x),$$
where the supremum is well defined because $h$ and $\lambda$ are bounded from above.  The functional $\hat{m}_{\lambda}$ is called the maximal extension of $m_{\lambda}$ (see \cite[Corollary 1]{KM89}, \cite[Theorem 2.2]{Aki98} and \cite[Pg. 36]{Kol97}, for a minimal version). The precise meaning of this name is explained by Lemma~\ref{maximal} and by Remark~\ref{rem: maximal}.

\begin{definition}\label{def: idemp integral}
  Given $\lambda \in \mathcal{B}(X, \mathbb{R}_{\max})$ we define the idempotent integral for $m_{\lambda} \in C^{*}(X,\mathbb{R})$ by the formula
  $$\int_{X} h(x) dm_{\lambda}(x):= \hat{m}_{\lambda}(h) = \bigoplus_{x \in X} \lambda(x) \odot h(x),$$
  for all $ h \in \mathcal{B}(X, \mathbb{R}_{\max})$. Obviously, if $h \in C(X, \mathbb{R})$ then $\int_{X} h(x) dm_{\lambda}(x)= m_{\lambda}(h)$.
\end{definition}

\begin{definition}\label{def: max-plus indicator}
  Given any $A \subset X$ we define the max-plus indicator function of $A$ as the function
  \[ \chi_{A}(x):=
  \left\{
    \begin{array}{ll}
      0, & x \in A \\
      -\infty, & x \not\in A
    \end{array}
  \right. .
  \]
\end{definition}

Obviously, $\chi_{A} \in \mathcal{B}(X, \mathbb{R}_{\max})$, for any $A \subset X$, so we can compute the integral of $\chi_{A}$ with respect to an idempotent measure $m_{\lambda}$ by applying the extension $\hat{m}_{\lambda}$
$$\int_{X} \chi_{A} d\mu:= \hat{m}_{\lambda}(\chi_{A}) = \bigoplus_{x \in X} \lambda(x) \odot \chi_{A}(x) = \bigoplus_{x \in A} \lambda(x).$$

We recall that $2^X$ is the set of parts of a set $X$.
\begin{definition}\label{def: set function idempotent measure}
  Consider $m_{\lambda} \in C^{*}(X,\mathbb{R})$ with extension $\hat{m}_{\lambda}$. The correspondence (we use the same $m_{\lambda}$ as symbol if there is no risk of confusion) $m_{\lambda}: 2^X  \to \mathbb{R}$ defined by
  $$m_{\lambda}(A):= \int_{X} \chi_{A}(x) dm_{\lambda}(x) = \bigoplus_{x \in A} \lambda(x),$$
for all $A \in 2^X $ is called the \emph{set idempotent measure} (see \cite[Corollary 1]{Kol97}, \cite[Definition 2.1]{Aki98} for cost measures, \cite[Section 3]{Aki99} and \cite[Definition 1]{MoDo98}).
\end{definition}

\begin{remark}\label{rem: maximal} By item (3) of Theorem~\ref{thm: charact idempotent measures},  if $\lambda \in U(X, \mathbb{R}_{\max})$ and $\lambda_1 \in \mathcal{B}(X, \mathbb{R}_{\max})$ satisfy  $m_{\lambda_{1}}= m_{\lambda}$ then  $\lambda= \lambda_{1}^{u.s.e.} \geq \lambda_{1}$. Thus,
$$m_{\lambda}(A)= \bigoplus_{x \in A} \lambda(x) \geq \bigoplus_{x \in A} \lambda_{1}(x)=m_{\lambda_{1}}(A),$$
for all $A \in 2^X$. So, the choice of the u.s.c. function $\lambda$ produces an integral bigger or equal to any other choice $\lambda_1$, which explains the meaning of the name ``maximal".
\end{remark}

\begin{proposition}\label{prop: prop set idemp measure}
  Let $m_{\lambda} \in C^{*}(X,\mathbb{R})$ with extension $\hat{m}_{\lambda}$. Then,
\begin{enumerate}
  \item $m_{\lambda}(\varnothing)=-\infty$;
  \item $m_{\lambda}(A \cup B)= m_{\lambda}(A ) \oplus m_{\lambda}(B)$ for any $A,B \in 2^X$.
\end{enumerate}
\end{proposition}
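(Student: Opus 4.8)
The plan is to unwind the definition of the set idempotent measure $m_\lambda(A)=\bigoplus_{x\in A}\lambda(x)$ from Definition \ref{def: set function idempotent measure} and then observe that both claimed properties are immediate consequences of elementary facts about suprema (together with the convention $\bigoplus_{y\in\varnothing}f(y)=-\infty$ fixed just before Definition \ref{def:transfer operator}). Since $\lambda\in\mathcal{B}(X,\mathbb{R}_{\max})$ is bounded from above, every supremum appearing below is a well-defined element of $\mathbb{R}_{\max}$, so there are no convergence subtleties.

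For item (1), I would simply compute
\[
m_\lambda(\varnothing)=\int_X\chi_\varnothing(x)\,dm_\lambda(x)=\bigoplus_{x\in\varnothing}\lambda(x)=-\infty,
\]
invoking the stated convention that a max-plus sum over the empty index set equals the additive neutral element $\mathbb{0}=-\infty$. (Equivalently, $\chi_\varnothing\equiv-\infty$, so $\hat m_\lambda(\chi_\varnothing)=\bigoplus_{x\in X}\lambda(x)\odot(-\infty)=-\infty$.)

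For item (2), the key observation is that $\bigoplus_{x\in X}=\sup_{x\in X}$ and that for the union of two index sets one has $\sup_{x\in A\cup B}\lambda(x)=\max\big(\sup_{x\in A}\lambda(x),\,\sup_{x\in B}\lambda(x)\big)$, with the usual conventions when $A$ or $B$ is empty. Thus
\[
m_\lambda(A\cup B)=\bigoplus_{x\in A\cup B}\lambda(x)=\Big(\bigoplus_{x\in A}\lambda(x)\Big)\oplus\Big(\bigoplus_{x\in B}\lambda(x)\Big)=m_\lambda(A)\oplus m_\lambda(B).
\]
There is no real obstacle here: the only points worth a line of care are handling the degenerate cases $A=\varnothing$ or $B=\varnothing$ (covered by item (1) and the convention), and noting boundedness of $\lambda$ so that the suprema live in $\mathbb{R}_{\max}$ rather than being merely formal. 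The statement is essentially a restatement of the fact that $A\mapsto\bigoplus_{x\in A}\lambda(x)$ is a (finitely) max-plus additive set function with value $-\infty$ on $\varnothing$.
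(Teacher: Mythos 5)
Your proof is correct and follows essentially the same route as the paper: both items reduce to the definition $m_\lambda(A)=\bigoplus_{x\in A}\lambda(x)$, the empty-supremum convention, and the splitting of a supremum over a union of index sets (the paper phrases the latter via the identity $\chi_{A\cup B}=\chi_A\oplus\chi_B$ and distributivity of $\odot$ over $\oplus$, which is the same computation).
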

\begin{proof}
(1)  As usual in the set function theory, we assume that, for any function $f: X \to \mathbb{R}_{\max}$ the supremum over an empty set is the smallest value  in $\mathbb{R}_{\max}$, that is, $-\infty$. Thus,  $m_{\lambda}(\varnothing)=\bigoplus_{x \in \varnothing} \lambda(x) = -\infty$.

(2) $$m_{\lambda}(A \cup B)=\int_{X} \chi_{_{A \cup B}}(x) dm_{\lambda}(x)= \hat{m}_{\lambda}(\chi_{_{A \cup B}}) = \bigoplus_{x \in X} \lambda(x) \odot \chi_{_{A \cup B}}(x) =$$
$$=\bigoplus_{x \in X} \lambda(x) \odot ( \chi_{_{A}}(x) \oplus \chi_{_{B}}(x)) = \bigoplus_{x \in B} \lambda(x)  \oplus
\bigoplus_{x \in A} \lambda(x) = m_{\lambda}(A ) \oplus m_{\lambda}(B).$$
\end{proof}

\end{document}